\documentclass[12pt]{article}
\usepackage{fullpage}
\usepackage{amsmath,amssymb,amsfonts,amsthm}
\usepackage[all,arc,knot,poly]{xy}
\usepackage{enumerate}
\usepackage{mathrsfs}
\usepackage{mathtools}
\usepackage{amsxtra}
\usepackage{needspace}
\usepackage{graphicx}
\usepackage{setspace}
\usepackage{authblk}

\DeclareMathOperator{\Tr}{\operatorname{Tr}}

\theoremstyle{definition}
\newtheorem{theorem}{Theorem}[section]
\newtheorem{definition}[theorem]{Definition}
\newtheorem{lemma}[theorem]{Lemma}
\newtheorem{proposition}[theorem]{Proposition}

\newtheorem{corollary}[theorem]{Corollary}

\title{A duality map for quantum cluster varieties \protect\\from surfaces}

\author{Dylan G.L. Allegretti}
\affil{Department of Mathematics, Yale University, 10 Hillhouse Ave, New Haven, CT 06511, U.S.A.}

\author{Hyun Kyu Kim}
\affil{School of Mathematics, Korea Institute for Advanced Study, 85 Hoegiro, Dongdaemun-gu, Seoul~02455, Republic of Korea}

\date{}

\setcounter{Maxaffil}{0}

\begin{document}

\maketitle

\begin{abstract}
We define a canonical map from a certain space of laminations on a punctured surface into the quantized algebra of functions on a cluster variety. We show that this map satisfies a number of special properties conjectured by Fock and Goncharov. Our construction is based on the ``quantum trace'' map introduced by Bonahon and~Wong.
\end{abstract}

\tableofcontents

\section{Introduction}

\subsection{Fock and Goncharov's duality map}

In their seminal paper~\cite{IHES}, Fock and Goncharov associated, to a punctured surface $S$, a pair of moduli spaces denoted $\mathcal{A}_{SL_2,S}$ and~$\mathcal{X}_{PGL_2,S}$. These spaces are closely related to moduli spaces of $SL_2$- and $PGL_2$-local systems, respectively. Fock and Goncharov showed that certain ideas from Teichm\"uller theory can be understood in terms of these objects~\cite{IHES}.

One of the main results of~\cite{IHES} (see also~\cite{dual}) was the existence of a duality between the spaces $\mathcal{A}_{SL_2,S}$ and $\mathcal{X}_{PGL_2,S}$. More precisely, Fock and Goncharov defined a space $\mathcal{A}_{SL_2,S}(\mathbb{Z}^t)$ which is a tropicalization of $\mathcal{A}_{SL_2,S}$, and they constructed a canonical map 
\[
\mathbb{I}:\mathcal{A}_{SL_2,S}(\mathbb{Z}^t)\rightarrow\mathbb{Q}(\mathcal{X}_{PGL_2,S})
\]
from this space into the algebra of functions on $\mathcal{X}_{PGL_2,S}$. The spaces $\mathcal{A}_{SL_2,S}(\mathbb{Z}^t)$ and $\mathcal{X}_{PGL_2,S}$ admit natural coordinates $\{a_i\}_{i\in I}$ and $\{X_i\}_{i\in I}$, parametrized by the set $I$ of edges of an ideal triangulation of~$S$. If we number these edges so that $I=\{1,\dots,n\}$, then we have the following result.

\begin{theorem}[\cite{IHES}, Theorem~12.2]
\label{thm:introclassicalproperties}
The canonical functions $\mathbb{I}(\ell)$ defined by the above construction satisfy the following properties:
\begin{enumerate}
\item For any choice of ideal triangulation, $\mathbb{I}(\ell)$ is a Laurent polynomial in the coordinates $X_i$ with highest term $X_1^{a_1}\dots X_n^{a_n}$ where $a_i$ is the coordinate of $\ell$ associated to the edge~$i$.

\item The coefficients of the Laurent polynomial $\mathbb{I}(\ell)$ are positive integers.

\item For any $\ell$,~$\ell'\in\mathcal{A}_{SL_2,S}(\mathbb{Z}^t)$, we have
\[
\mathbb{I}(\ell)\mathbb{I}(\ell')=\sum_{\ell''\in\mathcal{A}_{SL_2,S}(\mathbb{Z}^t)}c(\ell,\ell';\ell'')\mathbb{I}(\ell'')
\]
where $c(\ell,\ell';\ell'')$ are nonnegative integers and only finitely many terms are nonzero.
\end{enumerate}
\end{theorem}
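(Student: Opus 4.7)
The plan is to realize $\mathbb{I}(\ell)$ concretely as a trace-of-monodromy function on $\mathcal{X}_{PGL_2,S}$ and then verify the three properties by an explicit computation in an ideal triangulation $T$ with edges $\{1,\dots,n\}$. First I would identify an integral point $\ell \in \mathcal{A}_{SL_2,S}(\mathbb{Z}^t)$ with an integral lamination on $S$, that is, a finite collection of disjoint simple closed curves $\gamma_\alpha$ (together with arcs to punctures) carrying positive integer weights $k_\alpha$. For a single simple closed curve $\gamma$, set $\mathbb{I}(\gamma) := \Tr \rho(\gamma)$, where $\rho$ is the tautological local system (lifted from $PGL_2$ to $SL_2$ using the spin structure encoded by $\mathcal{A}_{SL_2,S}$). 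In the $X$-coordinates of $T$, this holonomy is computed as an alternating product of diagonal \emph{edge matrices} $E_i = \operatorname{diag}(X_i^{1/2}, X_i^{-1/2})$ and constant unipotent \emph{turning matrices} $L, R$, with one factor per edge-crossing and per triangle traversed; for a weighted multicurve $\ell = \sum_\alpha k_\alpha \gamma_\alpha$ define $\mathbb{I}(\ell) = \prod_\alpha P_{k_\alpha}(\mathbb{I}(\gamma_\alpha))$, where $P_k$ is the Chebyshev-type polynomial satisfying $P_k(\Tr M) = \Tr M^k$ for $M \in SL_2$.

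For property~(1), I would expand $\Tr \rho(\gamma)$ as a state sum indexed by choices of matrix entry at each edge-crossing. Each state contributes a Laurent monomial in the $X_i$ whose exponent of $X_i$ lies between $-i(\gamma,i)$ and $i(\gamma,i)$, where $i(\gamma,i)$ is the geometric intersection number; with the normalization fixed by the spin structure, the maximal monomial is $X_1^{a_1}\cdots X_n^{a_n}$, with $a_i = i(\gamma,i)$ equal to the coordinate of $\ell$ at edge $i$, and it is attained by a unique \emph{all-diagonal} state. Property~(2) then follows because with the standard choices of $L$ and $R$ the entries of the products $E_i L$ and $E_i R$ are monomials with coefficient~$+1$, so the state sum has nonnegative integer coefficients. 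For multicurves, the Chebyshev normalization $P_k$ preserves both positivity and the highest-term statement.

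For property~(3), the key ingredient is the classical $SL_2$ trace identity $\Tr(A)\Tr(B) = \Tr(AB) + \Tr(AB^{-1})$, which translates geometrically into the Goldman skein relation for trace functions: at each transverse intersection point of two curves $\gamma$ and $\gamma'$, the product $\mathbb{I}(\gamma)\mathbb{I}(\gamma')$ equals $\mathbb{I}(\gamma_+) + \mathbb{I}(\gamma_-)$, where $\gamma_\pm$ are the two resolutions of the crossing. Iterating resolves all $|\gamma \cap \gamma'|$ crossings and expresses the product as a finite sum $\sum_{\ell''} c(\ell,\ell';\ell'')\mathbb{I}(\ell'')$ with $c(\ell,\ell';\ell'') \in \mathbb{Z}_{\geq 0}$, where $\ell''$ ranges over the simple multicurves produced by the resolutions. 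The weighted case follows by iterating on the Chebyshev expansions, and finiteness is automatic from finiteness of $|\gamma \cap \gamma'|$.

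The main obstacle I expect is in verifying property~(2): one must confirm that the entries of $E_i L$ and $E_i R$ genuinely have coefficient $+1$ in the chosen conventions, and that positivity survives the passage from simple closed curves to general weighted laminations both through the Chebyshev polynomials and through the skein resolutions used in property~(3). A secondary difficulty, which must be dispatched before any of this begins, is to show that $\mathbb{I}(\ell)$ depends only on the isotopy class of $\ell$ and that its triangulation-dependent definition transforms correctly under a flip, so that the statement of the theorem makes sense independently of the choice of $T$.
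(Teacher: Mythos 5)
This theorem is cited from Fock--Goncharov~\cite{IHES} and is not reproved in the present paper, so there is no proof here to compare against; your sketch is a reasonable reconstruction of the Fock--Goncharov strategy (holonomy as a product of edge and turning matrices, state-sum expansion for the highest term, the $SL_2$ trace identity / Goldman skein relation for the product).

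There is, however, one concrete gap you should fix. The assertion that ``the Chebyshev normalization $P_k$ preserves positivity'' is false as stated: $P_2(t)=t^2-2$ has a negative coefficient, and applying $P_k$ to a Laurent polynomial with nonnegative coefficients does not in general return one with nonnegative coefficients. The correct argument is more direct: for a weight-$k$ simple closed curve $\gamma$, one has $\mathbb{I}(k\gamma)=\Tr\bigl(\rho(\gamma)^k\bigr)$, and $\rho(\gamma)^k$ is still an explicit product of the matrices with $0/1$-coefficient monomial entries, so its trace manifestly has nonnegative integer coefficients. The fact that this equals $P_k(\Tr\rho(\gamma))$ is Proposition~\ref{prop:Chebyshevtrace} of the paper, but the positivity comes from the matrix-product expression, not from any positivity property of $P_k$ (the negative terms in $P_k$ cancel precisely because $\det\rho(\gamma)=1$ forces the maximal and minimal monomials to multiply to $1$). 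The same issue infects your treatment of property~(3) for weighted multicurves: iterating the forward Chebyshev expansion introduces negative coefficients, and what one actually needs there is the ``inverse'' expansion $t^k=\sum_{i}c_{k,i}F_i(t)$ with $c_{k,i}\ge 0$ (cf.\ Lemma~\ref{lem:inverseChebyshev} of this paper), combined with the skein resolution, to keep the structure constants nonnegative. Finally, you correctly note that well-definedness under change of triangulation must be established first; in~\cite{IHES} this is part of the proof that the $X_i$-coordinate change is a cluster transformation.
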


One of the important features of $\mathcal{X}_{PGL_2,S}$ is that this space is equipped with a natural Poisson structure and can be canonically quantized~\cite{ensembles,dilog}. To do this, we replace the commutative functions $X_i^{\pm1}$ by a collection of noncommuting variables, also denoted $X_i^{\pm1}$, which satisfy the relations 
\[
X_iX_j=q^{2\varepsilon_{ij}}X_jX_i
\]
where $q$ is a complex number and $\varepsilon_{ij}$ is a skew-symmetric matrix determined by the ideal triangulation. Thus we associate to each ideal triangulation $T$ of~$S$ a noncommutative algebra $\mathcal{X}_T^q$ sometimes called the Chekhov-Fock algebra in the literature. If we let $\widehat{\mathcal{X}}_T^q$ denote the noncommutative fraction field of $\mathcal{X}_T^q$, then for triangulations $T$ and $T'$, there is a map $\Phi_{TT'}^q:\widehat{\mathcal{X}}_{T'}^q\rightarrow \widehat{\mathcal{X}}_T^q$ of skew fields which reduces to the classical transition function when~$q=1$ and satisfies the relation $\Phi_{TT''}^q=\Phi_{TT'}^q\circ\Phi_{T'T''}^q$ for triangulations $T$, $T'$, and~$T''$. These noncommutative algebras and the maps between them form what Fock and Goncharov call a \emph{quantum cluster variety}~\cite{dilog}.

\subsection{The present work}

The purpose of the present paper is to construct a quantum version of the map $\mathbb{I}$. More precisely, we prove the following theorem:

\begin{theorem}
\label{thm:intromain}
For any ideal triangulation $T$ of $S$, there exists a map  $\widehat{\mathbb{I}}^q=\widehat{\mathbb{I}}_T^q:\mathcal{A}_{SL_2,S}(\mathbb{Z}^t)\rightarrow\mathcal{X}_T^q$ satisfying the following properties:
\begin{enumerate}
\item The Laurent polynomial $\mathbb{I}(\ell)$ agrees with $\widehat{\mathbb{I}}^q(\ell)$ when~$q=1$.

\item The highest term of $\widehat{\mathbb{I}}^q(\ell)$ is 
\[
q^{-\sum_{i<j}\varepsilon_{ij}a_ia_j}X_1^{a_1}\dots X_n^{a_n}
\]
where $a_i$ is the coordinate of~$\ell$ associated to the edge~$i$.

\item Each $\widehat{\mathbb{I}}^q(\ell)$ is a Laurent polynomial in the variables $X_i$ with coefficients in $\mathbb{Z}[q,q^{-1}]$.

\item Let $*$ be the canonical involutive antiautomorphism of $\mathcal{X}_T^q$ that fixes each $X_i$ and sends~$q$ to~$q^{-1}$. Then $*\widehat{\mathbb{I}}^q(\ell)=\widehat{\mathbb{I}}^q(\ell)$.

\item For any $\ell$,~$\ell'\in\mathcal{A}_{SL_2,S}(\mathbb{Z}^t)$, we have 
\[
\widehat{\mathbb{I}}^q(\ell)\widehat{\mathbb{I}}^q(\ell')=\sum_{\ell''\in\mathcal{A}_{SL_2,S}(\mathbb{Z}^t)}c^q(\ell,\ell';\ell'')\widehat{\mathbb{I}}^q(\ell'')
\]
where $c^q(\ell,\ell';\ell'')\in\mathbb{Z}[q,q^{-1}]$ and only finitely many terms are nonzero.

\item Let $q$ be a primitive $N$th root of unity for odd $N$. Then we have the identity 
\[
\widehat{\mathbb{I}}^q(N\cdot\ell)(X_1,\dots,X_n)=\widehat{\mathbb{I}}^{\, 1}(\ell)(X_1^N,\dots,X_n^N).
\]

\item Let $\ell$ be a point of the tropical space $\mathcal{A}_{SL_2,S}(\mathbb{Z}^t)$ with coordinates $b_1,\dots,b_n$, and let $a_1,\dots,a_n$ be integers satisfying $\sum_j\varepsilon_{ij}a_j=0$ for all $i\in I$. If $\ell'\in\mathcal{A}_{SL_2,S}(\mathbb{Z}^t)$ has coordinates $a_i+b_i$ for~$i\in I$, then 
\[
\widehat{\mathbb{I}}^q(\ell') = q^{-\sum_{i<j}\varepsilon_{ij}a_ia_j}\prod_iX_i^{a_i}\cdot\widehat{\mathbb{I}}^q(\ell).
\]
\end{enumerate}
For triangulations $T$ and $T'$, the maps $\widehat{\mathbb{I}}_T^q$ and $\widehat{\mathbb{I}}_{T'}^q$ are related by $\widehat{\mathbb{I}}_T^q=\Phi_{TT'}^q\circ\widehat{\mathbb{I}}_{T'}^q$.
\end{theorem}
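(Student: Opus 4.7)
The plan is to identify each tropical point $\ell \in \mathcal{A}_{SL_2,S}(\mathbb{Z}^t)$ with an integrally weighted simple multicurve on $S$ and to build $\widehat{\mathbb{I}}_T^q(\ell)$ out of the Bonahon--Wong quantum trace $\Tr^q_T$ applied to its components. Concretely, $\ell$ decomposes uniquely as a disjoint union of closed simple loops $\gamma_\alpha$ with integer weights $k_\alpha$ and peripheral loops $\delta_p$ around punctures $p$ with integer weights $n_p$. For each closed loop I would take $\Tr^q_T(\gamma_\alpha) \in \mathcal{X}_T^q$, which is a $*$-invariant Laurent polynomial with the correct classical limit, and thread the multiplicity $k_\alpha$ by evaluating the normalized Chebyshev polynomial $\widehat{T}_{k_\alpha}$ of the first kind, $\widehat{T}_k(2\cos\theta) = 2\cos(k\theta)$, at this element; these polynomials are the standard device for threading weights in Kauffman-bracket skein algebras and preserve both the Laurent property and $*$-invariance. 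The peripheral components are assigned specific Weyl-ordered monomials in the variables $X_i$ associated to the edges incident to~$p$. I would then define $\widehat{\mathbb{I}}_T^q(\ell)$ as the Weyl-ordered product of all these contributions.

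Most of the assertions of the theorem follow essentially formally from this definition. Property (1) is the classical limit built into the quantum trace. Property (3) holds because $\Tr^q_T$ lands in Laurent polynomials over $\mathbb{Z}[q,q^{-1}]$ and Chebyshev polynomials have integer coefficients. Property (4) holds because $\Tr^q_T$ is $*$-invariant and Weyl-ordering is $*$-symmetric. Property (2) reduces to a local computation at each edge: the leading monomial of $\Tr^q_T(\gamma_\alpha)$ along edge $i$ is $X_i^{|\gamma_\alpha \cap i|}$, and after Weyl-reordering against the commutation relation $X_iX_j = q^{2\varepsilon_{ij}}X_jX_i$ the assembled leading term is exactly $q^{-\sum_{i<j}\varepsilon_{ij}a_ia_j}X_1^{a_1}\cdots X_n^{a_n}$. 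Property (7) is visible in the construction, since shifts along the null space of $\varepsilon$ correspond to central Weyl-ordered monomials $\prod_i X_i^{a_i}$ that may be absorbed into the peripheral factor with exactly the displayed $q$-prefactor. Finally, the triangulation-equivariance $\widehat{\mathbb{I}}_T^q = \Phi_{TT'}^q \circ \widehat{\mathbb{I}}_{T'}^q$ is Bonahon--Wong's main compatibility theorem applied componentwise, extended to threaded components via the polynomial identity $\Phi_{TT'}^q \circ \widehat{T}_k = \widehat{T}_k \circ \Phi_{TT'}^q$.

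The main obstacles are Properties (5) and (6). For the product structure in~(5), my approach would be to reduce, using the Chebyshev identity $\widehat{T}_j\,\widehat{T}_k = \widehat{T}_{j+k} + \widehat{T}_{|j-k|}$ together with the multiplicativity of the quantum trace on disjoint unions, to the case where $\ell$ and $\ell'$ are two transversely intersecting simple multicurves. A finite resolution of their intersections via the Kauffman bracket skein relations in the thickened surface then expresses the product as a $\mathbb{Z}[q,q^{-1}]$-linear combination of simpler laminations, and one shows by induction on the number of crossings that the result is a finite combination of canonical elements. For Property (6), the identity at a primitive $N$th root of unity, the essential input is Bonahon's quantum Frobenius homomorphism for the skein algebra: at such a $q$, threading by $\widehat{T}_N$ intertwines the quantum trace with the classical trace evaluated after the substitution $X_i \mapsto X_i^N$. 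Combining this with the observation that the tropical operation $\ell \mapsto N\cdot\ell$ corresponds to threading every component by $\widehat{T}_N$, together with careful tracking of the peripheral monomials through the Frobenius, gives the stated identity. I expect this last bookkeeping of peripheral factors under the Frobenius, and the verification that the skein-resolution coefficients in~(5) are in fact independent of the triangulation, to be the most delicate technical points of the argument.
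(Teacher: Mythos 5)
Your overall architecture matches the paper's quite closely: Bonahon--Wong quantum trace on weight-one nonperipheral components, Chebyshev threading for higher weights (your $\widehat{T}_k$ is the paper's $F_k$), Weyl-ordered monomials for peripheral components, multiplicativity, skein-relation resolutions for the product expansion, and the Bonahon--Wong quantum Frobenius for the root-of-unity identity. However, there is a genuine gap that the proposal passes over, and it is precisely where most of the paper's technical work lives.

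You write that ``$\Tr^q_T(\gamma_\alpha) \in \mathcal{X}_T^q$, which is a $*$-invariant Laurent polynomial'' and that ``Property (3) holds because $\Tr^q_T$ lands in Laurent polynomials over $\mathbb{Z}[q,q^{-1}]$.'' This is not the case. The Bonahon--Wong quantum trace is a map $\Tr_T^\omega:\mathcal{S}^A(S)\rightarrow\widehat{\mathcal{Z}}_T^\omega$ where $A=\omega^{-2}$ and $q=\omega^4$; it takes values in the algebra $\mathcal{Z}_T^\omega$ whose generators $Z_i$ are formal square roots of the cluster variables $X_i$, and its coefficients live in $\mathbb{Z}[\omega,\omega^{-1}]$, not $\mathbb{Z}[q,q^{-1}]$. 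For a single simple closed curve $\gamma_\alpha$ the intersection number $|\gamma_\alpha\cap i|$ need not be even, so $\Tr_T^\omega([K_\alpha])$ generically contains odd powers of $Z_i$ and odd powers of $\omega$ and does \emph{not} lie in $\mathcal{X}_T^q$. (Your claim that the leading monomial along edge $i$ is $X_i^{|\gamma_\alpha\cap i|}$ is the same confusion: the leading monomial is $Z_i^{|\gamma_\alpha\cap i|}$, i.e., $X_i^{|\gamma_\alpha\cap i|/2}$.) Only the full product over all components of a lamination $\ell\in\mathcal{A}_{SL_2,S}(\mathbb{Z}^t)$ descends to $\mathcal{X}_T^q\subseteq\mathcal{Z}_T^\omega$ with coefficients in $\mathbb{Z}[q,q^{-1}]$, and proving that descent is nontrivial: the paper introduces a ``parity'' invariant $(Q,\mathbf{q})\in\mathbb{Z}/4\mathbb{Z}\times(\mathbb{Z}/2\mathbb{Z})^n$ on Laurent $\omega$-monomials in the $Z_i$, shows via a term-by-term analysis of the biangle/triangle state sum that every term of $\Tr_T^\omega$ of a single crossingless closed curve is parity-compatible, propagates parity through Chebyshev powers and through the multiplicative assembly, and finally uses $*$-invariance plus the computed highest term to pin the total parity at $(0,\mathbf{0})$. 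Your proposal gives no replacement for this argument.

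The same gap reappears in Property (5): the skein-resolution approach that you (and the paper) use naturally produces coefficients in $\mathbb{Z}[\omega^2,\omega^{-2}]$ and laminations $\ell''$ in the larger space $\mathcal{A}_L(S,\mathbb{Z})$. A further parity argument, running down the highest terms in a lexicographic order, is needed to show that the coefficients actually lie in $\mathbb{Z}[q,q^{-1}]$ and the $\ell''$ lie in $\mathcal{A}_{SL_2,S}(\mathbb{Z}^t)$. You flag ``verification that the skein-resolution coefficients in~(5) are in fact independent of the triangulation'' as the delicate point, but that part is actually automatic (all the objects are compatible with $\Phi^q_{TT'}$); the genuinely delicate point is the $\omega$-to-$q$ descent, which your proposal treats as given.
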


The properties listed in this theorem were conjectured by Fock and Goncharov in Conjecture~12.4 of~\cite{IHES} and Conjecture~4.8 of~\cite{ensembles}. The quantum duality map $\widehat{\mathbb{I}}^q$ is expected to be important in physics, where it is related to the ``protected spin character'' introduced in~\cite{GMN}. Our construction may also lead to interesting extensions of the results of~\cite{GHKK} on canonical bases for cluster algebras.

In~\cite{IHES} and~\cite{ensembles}, Fock and Goncharov conjectured various positivity properties of the map~$\widehat{\mathbb{I}}^q$. In particular, they conjectured that each $c^q(\ell,\ell',\ell'')$ is a Laurent polynomial in~$q$ with \emph{positive} integral coefficients. While we do not have a proof of this statement, we believe it is closely related to various conjectures and results from the literature, particularly the product-to-sum formula of~\cite{productsum} and Conjecture~4.20 of~\cite{Thurston}.

The basic idea of the proof of Theorem~\ref{thm:intromain} is to apply a construction of Bonahon and Wong from~\cite{BonahonWong}. This construction involves a noncommutative algebra $\mathcal{S}^A(S)$ called the skein algebra, which depends on a quantization parameter~$A$ and delivers the algebra of regular functions on the $SL_2(\mathbb{C})$-character variety of $S$ in the classical limit $A=-1$. Bonahon and Wong construct a natural map 
\[
\Tr_T^\omega:\mathcal{S}^A(S)\rightarrow\mathcal{X}_T^\omega
\]
from this algebra to the Chekhov-Fock algebra where $A=\omega^{-2}$.

The skein algebra $\mathcal{S}^A(S)$ is defined as an algebra generated by isotopy classes of framed links in the 3-manifold $S\times[0,1]$ modulo certain relations. This object bears some resemblance to the space $\mathcal{A}_{SL_2,S}(\mathbb{Z}^t)$ that we considered above. Indeed, Fock and Goncharov showed in~\cite{IHES} that the set $\mathcal{A}_{SL_2,S}(\mathbb{Z}^t)$ parametrizes laminations on the surface $S$. A lamination is defined in~\cite{IHES} as a collection of finitely many simple, disjoint, closed curves on $S$ with numerical weights, subject to certain conditions and equivalence relations.

If we are given a closed curve $\ell$ on the surface $S$, we can lift it to a framed link in $S\times[0,1]$ and apply the map $\Tr_T^\omega$ to get an element of the Chekhov-Fock algebra. We extend this idea to get a map $\mathcal{A}_{SL_2,S}(\mathbb{Z}^t)\rightarrow\mathcal{X}_T^q$ which satisfies the properties in Theorem~\ref{thm:intromain}.

\subsection{Organization}

The rest of this paper is organized as follows. In Section~\ref{sec:Preliminaries}, we review the results we need from~\cite{BonahonWong,IHES} and establish some technical lemmas for later use. We begin by recalling the definitions of $\mathcal{A}_{SL_2,S}(\mathbb{Z}^t)$ and $\mathcal{X}_{PGL_2,S}$ and the classical duality theorem from~\cite{IHES}. We then define the Chekhov-Fock and skein algebras and review the construction of Bonahon and Wong from~\cite{BonahonWong}.

In Section~\ref{sec:TheQuantumCanonicalMap}, we define our canonical map using $\Tr_T^\omega$ and show that it has the correct classical limit. We study the highest term of this map and its invariance under the involution~$*$. We then derive a version of the product expansion formula from Theorem~\ref{thm:intromain}.

Finally, in Section~\ref{sec:MainResult}, we prove our main results. The most technical step in this section involves switching between the Chekhov-Fock algebras $\mathcal{X}_T^\omega$ and $\mathcal{X}_T^q$ where $q=\omega^4$. For this, we introduce a notion of ``parity'' and apply this concept to elements of the algebra $\mathcal{X}_T^\omega$.

\section{Preliminaries}
\label{sec:Preliminaries}

\subsection{Laminations and moduli spaces}

We begin by introducing some terminology related to surfaces. Our treatment here is based on~\cite{IHES}.

\begin{definition}
A \emph{decorated surface} is a compact oriented surface with boundary together with a finite (possibly empty) collection of marked points on its boundary.
\end{definition}

Given a decorated surface $S$, we can shrink those boundary components without marked points to get a surface $S'$ with punctures and boundary where every boundary component contains at least one marked point.

\begin{definition}
Let $S$ be a decorated surface. An \emph{ideal triangulation} $T$ of~$S$ is a triangulation of the surface $S'$ described in the preceding paragraph whose vertices are the marked points and the punctures. We will write $I$ for the set of all edges of the ideal triangulation~$T$.
\end{definition}

The following definition describes a matrix which encodes the combinatorics of an ideal triangulation~$T$.

\begin{definition}
For $i$,~$j\in I$, let $a_{ij}$ denote the number of angular sectors delimited by $i$ and~$j$ in the triangles of $T$ with $i$ coming first counterclockwise, and put
\[
\varepsilon_{ij}\coloneqq a_{ij}-a_{ji}.
\]
\end{definition}

For the rest of this section, $S$ will denote a fixed decorated surface with no marked points. A curve on $S$ will be called \emph{peripheral} if it is retractable to a boundary component.

\begin{definition}[\cite{IHES}, Definition~12.1]
A \emph{rational bounded lamination} on~$S$ is the homotopy class of a collection of finitely many simple nonintersecting noncontractible closed curves on~$S$ with rational weights, subject to the following conditions and equivalence relations:
\begin{enumerate}
\item The weight of a curve is nonnegative unless the curve is peripheral.
\item A lamination containing a curve of weight zero is equivalent to the lamination with this curve removed.
\item A lamination containing homotopic curves of weights $a$ and $b$ is equivalent to the lamination with one curve removed and the weight $a+b$ on the other.
\end{enumerate}
The set of all rational bounded laminations on $S$ is denoted $\mathcal{A}_L(S,\mathbb{Q})$.
\end{definition}

From now on, we will refer to rational bounded laminations simply as \emph{laminations}. We will denote by $\mathcal{A}_L(S,\mathbb{Z})$ the set of all laminations $\ell\in\mathcal{A}_L(S,\mathbb{Q})$ such that $\ell$ can be represented by a collection of curves with integral weights.

There is a natural set of coordinate functions on $\mathcal{A}_L(S,\mathbb{Q})$ defined as follows. Fix a lamination $\ell\in\mathcal{A}_L(S,\mathbb{Q})$ and an ideal triangulation of~$S$. Deform the curves of $\ell$ so that each curve intersects each edge of the triangulation in the minimal number of points. Then we can define 
\[
a_i(\ell)\coloneqq\frac{1}{2}\mu_i(\ell)
\]
where $\mu_i(\ell)$ is the total weight of curves that intersect the edge $i$.

\begin{proposition}[\cite{IHES}, Theorem~12.1]
The numbers $a_i$ ($i\in I$) provide a bijection 
\[
\mathcal{A}_L(S,\mathbb{Q})\rightarrow\mathbb{Q}^{|I|}.
\]
\end{proposition}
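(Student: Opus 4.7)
The plan is to exhibit an explicit two-sided inverse to the coordinate map. Given weights $(a_i)_{i\in I}\in\mathbb{Q}^{|I|}$ I will produce, in a canonical way, a unique lamination realising them.

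Step 1 (per-triangle arcs). In any ideal triangle $\triangle$ with vertices $v_1,v_2,v_3$ and opposite edges $e_1,e_2,e_3$, a lamination restricts to arcs of three homotopy types: the ``$v_s$-arcs'' are those connecting the two edges $e_t,e_u$ incident to $v_s$ (with $\{s,t,u\}=\{1,2,3\}$). Write $y_s^\triangle$ for the total weight of $v_s$-arcs. Each arc ending on $e_s$ is a $v_t$-arc or a $v_u$-arc, so the triangle contributes $y_t^\triangle+y_u^\triangle$ to the total weighted intersection number on $e_s$; matching arcs across $e_s$ with those of the adjacent triangle shows this already equals the full $\mu_{e_s}=2a_{e_s}$. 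Solving the resulting $3\times 3$ system
\[
y_1^\triangle+y_2^\triangle=2a_{e_3},\qquad y_2^\triangle+y_3^\triangle=2a_{e_1},\qquad y_3^\triangle+y_1^\triangle=2a_{e_2}
\]
yields $y_s^\triangle=a_{e_t}+a_{e_u}-a_{e_s}$. Thus $(a_i)$ determines every $y_s^\triangle\in\mathbb{Q}$, possibly negative.

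Step 2 (isolating the peripheral part). A peripheral curve of weight $w_v$ around a puncture $v$ contributes exactly $w_v$ to $y_v^\triangle$ for every triangle $\triangle$ at $v$ and nothing to any $y_{v'}^\triangle$ with $v'\neq v$. Splitting a lamination as (non-peripheral part) $+\sum_v w_v\gamma_v$, the non-peripheral contribution $z_v^\triangle:=y_v^\triangle-w_v$ is nonnegative. The essential claim is
\[
\min_{\triangle\ni v} z_v^\triangle = 0 \qquad \text{for every puncture } v,
\]
which forces $w_v=\min_{\triangle\ni v}y_v^\triangle$ and makes the whole decomposition a function of $(a_i)$ alone. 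To prove this claim, suppose $z_v^\triangle>0$ in every triangle at $v$. Each such $\triangle$ then contains a nonempty nested family of arcs around $v$ coming from non-peripheral curves, and its innermost arc has both endpoints closest to $v$ on the two edges at $v$. Since two arcs sharing a point on a shared edge must lie on the same curve of the lamination (curves are pairwise disjoint), following the innermost arcs from triangle to triangle around $v$ traces out a single non-peripheral curve $\gamma$ that possesses one arc around $v$ in every triangle at $v$. But such a $\gamma$ is isotopic to a peripheral curve around $v$, contradicting non-peripherality.

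Step 3 (reassembly and verification). With $z_v^\triangle=y_v^\triangle-w_v\ge 0$, in each triangle I draw $z_s^\triangle$ units of $v_s$-arcs. At every shared edge the sums of arc-endpoint weights on the two sides coincide (both equal $2a_e-w_v-w_{v'}$ where $v,v'$ are the endpoints of $e$), so I pair them up in the unique planar (non-crossing) matching, obtaining a weighted system $\ell_0$ of disjoint simple closed curves. Adding the peripheral curves $\sum_v w_v\gamma_v$ produces a lamination $\ell$, and a direct computation confirms $a_i(\ell)=a_i$; this is surjectivity. Injectivity is immediate: any lamination with coordinates $(a_i)$ yields, via the same procedure, identical $y_v^\triangle$, $w_v$, $z_v^\triangle$, and hence coincides with the $\ell$ just constructed.

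The main obstacle is Step 2's claim $\min_{\triangle\ni v}z_v^\triangle=0$, i.e.\ the innermost-arc tracing argument. The topological assertion that the traced strand closes up after a single trip around $v$ and is then isotopic to the peripheral loop around $v$ needs to be made precise, and one must be careful that a simple non-peripheral curve cannot ``secretly wrap'' a puncture. Everything else reduces to elementary linear algebra in each triangle and the standard uniqueness of non-crossing pairings on an edge.
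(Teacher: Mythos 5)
The paper does not prove this statement; it is quoted directly as \cite{IHES}, Theorem~12.1, so there is no in-paper argument to compare against. Your proposal reconstructs the standard proof via corner coordinates, and it is correct in outline. Step~1's linear algebra, giving $y_s^\triangle=a_{e_t}+a_{e_u}-a_{e_s}$, is right (implicitly using that the representative intersects each edge minimally so that every local arc is a genuine corner arc, with no bigon). Step~3's reassembly is standard once Step~2 is in place: nonnegativity of $z_v^\triangle$ is automatic from $w_v=\min_\triangle y_v^\triangle$, and you correctly checked that edge counts match across each edge. The genuine content is Step~2, which you rightly flag. Your argument does go through: corner arcs at $v$ within a single triangle are nested, so the innermost one is well defined; the closest crossing to $v$ on a shared edge is the endpoint of the innermost $v$-corner arc on both sides, so innermost arcs concatenate, close up after one trip around the link of $v$, all lie on one curve $\gamma$ (they share endpoints, and curves are simple and disjoint), and the resulting loop bounds a once-punctured disk around $v$. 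To make the write-up complete you should treat the edge cases of self-folded triangles and edges with both endpoints at a single puncture (where the link-of-$v$ cycle visits a triangle twice), and note explicitly why the reassembled corner curves cannot be contractible (the disk they would bound is a union of corners at a single vertex, hence contains that puncture). None of these is an obstruction, just bookkeeping to be spelled out.
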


Note that this proposition gives $\mathcal{A}_L(S,\mathbb{Q})$ the natural structure of a $\mathbb{Z}$-module. If $\ell$ is any element of~$\mathcal{A}_L(S,\mathbb{Z})$, we can write 
\[
\ell=\sum_ik_i\ell_i
\]
where $\{\ell_i\}_i$ is a collection of curves representing $\ell$ with each homotopy class of curves appearing at most once in the sum and $k_i\in\mathbb{Z}$.

\begin{definition}
We will write $\mathcal{A}_{SL_2,S}(\mathbb{Z}^t)$ for the set of all $\ell\in\mathcal{A}_L(S,\mathbb{Z})$ that have integral coordinates.
\end{definition}

In addition to the laminations, we will be interested in a version of the moduli space of $PGL_2$-local systems on~$S$. To define this object, let $S'$ be the surface obtained by shrinking the boundary components of $S$ to punctures. If we equip this surface $S'$ with a complete, finite-area hyperbolic metric, then its universal cover can be identified with the hyperbolic plane $\mathbb{H}$. The punctures on $S'$ give rise to a set $\mathcal{F}_\infty(S)$ of points on the boundary $\partial\mathbb{H}$, and the action of~$\pi_1(S)$ by deck transformations on the universal cover gives rise to an action of~$\pi_1(S)$ on this set $\mathcal{F}_\infty(S)$.

\begin{definition}[\cite{IHES}, Lemma~1.1]
Consider the pair $(\rho,\psi)$ where $\rho:\pi_1(S)\rightarrow PGL_2(\mathbb{C})$ is a group homomorphism and $\psi:\mathcal{F}_\infty(S)\rightarrow\mathbb{P}^1(\mathbb{C})$ is a $(\pi_1(S),\rho)$-equivariant map from the set described above into $\mathbb{P}^1(\mathbb{C})$. That is, for any $\gamma\in\pi_1(S)$, we have 
\[
\psi(\gamma c)=\rho(\gamma)\psi(c).
\]
We will write $\mathcal{X}_{PGL_2,S}(\mathbb{C})$ for the space such pairs modulo the action of $PGL_2(\mathbb{C})$. A point of $\mathcal{X}_{PGL_2,S}(\mathbb{C})$ is called a \emph{framed $PGL_2(\mathbb{C})$-local system} on $S$.
\end{definition}

The space $\mathcal{X}_{PGL_2,S}(\mathbb{C})$ is the set of complex points of a moduli space $\mathcal{X}_{PGL_2,S}$.

\begin{proposition}[\cite{IHES}, Theorem~9.1]
For any ideal triangulation $T$, there are rational functions $X_i$~$(i\in I)$ on the space $\mathcal{X}_{PGL_2,S}$, which provide a birational map 
\[
\mathcal{X}_{PGL_2,S}\dashrightarrow\mathbb{G}_m^{|I|}.
\]
\end{proposition}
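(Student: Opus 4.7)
The plan is to define each coordinate $X_i$ as a cross-ratio attached to the edge $i$ and then construct an inverse map by reconstructing a framed local system from cross-ratio data, using the fact that any quadruple of distinct points in $\mathbb{P}^1(\mathbb{C})$ is determined up to the $PGL_2$-action by a single cross-ratio.

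First I would fix an interior edge $i\in I$. Removing the interior of $i$ from $T$ leaves a quadrilateral with four vertices $v_1,v_2,v_3,v_4\in\mathcal{F}_\infty(S)$ (punctures/marked points of $S'$), numbered so that $i$ has endpoints $v_2,v_4$. Given a framed local system $(\rho,\psi)$, lift the four vertices to $\mathbb{H}$ in a configuration covering the quadrilateral, and set
\[
X_i(\rho,\psi)\coloneqq [\psi(\tilde v_1):\psi(\tilde v_2):\psi(\tilde v_3):\psi(\tilde v_4)],
\]
the cross-ratio of the four images in $\mathbb{P}^1(\mathbb{C})$. Because $\psi$ is $(\pi_1(S),\rho)$-equivariant and the cross-ratio is $PGL_2$-invariant, the right-hand side is independent of the chosen lift of the quadrilateral and of the $PGL_2$-conjugation representative of the pair $(\rho,\psi)$. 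It is manifestly a rational function on $\mathcal{X}_{PGL_2,S}$ and lies in $\mathbb{G}_m$ on the locus where the four images are distinct, which is a dense open subset.

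The heart of the argument is producing a rational inverse to $(X_i)_{i\in I}$. Here I would fix one triangle $t_0$ of $T$ with vertices $u_1,u_2,u_3$ and normalize $\psi(u_1),\psi(u_2),\psi(u_3)$ to the standard triple $0,1,\infty\in\mathbb{P}^1(\mathbb{C})$, killing the $PGL_2$-ambiguity. Given prescribed values $x_i\in\mathbb{G}_m$ and any triangle $t$ adjacent to $t_0$ across some edge $i$, the value $x_i$ and the three already-determined images of the vertices of $t$ on $i$ uniquely determine the image under $\psi$ of the remaining vertex of $t$. Iterating this procedure across the dual tree of the universal cover of $T$ defines $\psi$ on all lifts of the vertices. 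To define $\rho(\gamma)$ for $\gamma\in\pi_1(S)$, one picks any lift $\widetilde t_0$ of $t_0$, translates it by $\gamma$, reads off three points on $\mathbb{P}^1(\mathbb{C})$ assigned to the vertices of $\gamma\cdot\widetilde t_0$ by the procedure just described, and defines $\rho(\gamma)$ to be the unique element of $PGL_2(\mathbb{C})$ sending $(0,1,\infty)$ to this triple. Equivariance of $\psi$ then needs to be checked on the generating set and follows formally from the fact that the reconstruction at every step used only $PGL_2$-invariant data.

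The main obstacle I expect is this consistency check: one must verify that the $\rho(\gamma)$ so defined is a group homomorphism, equivalently, that the reconstruction is independent of the path in the dual tree used to compare $\widetilde t_0$ with $\gamma\cdot\widetilde t_0$. This reduces to a local verification around each puncture, where the product of the edge cross-ratios around a puncture constrains the monodromy of $\rho$ about that puncture to be well-defined; this is the standard ``snake'' computation for the Fock--Goncharov coordinates. Once this is established, the construction gives a rational section of $(X_i)$, so the map $\mathcal{X}_{PGL_2,S}\dashrightarrow\mathbb{G}_m^{|I|}$ is birational as claimed.
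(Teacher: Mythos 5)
The paper gives no proof of this Proposition: it is imported directly from~\cite{IHES}, Theorem~9.1, with no argument supplied here. Your reconstruction is nonetheless faithful to the Fock--Goncharov proof: one defines $X_i$ as the cross-ratio of the four $\psi$-images at the quadrilateral around the edge~$i$, and inverts by normalizing a base ideal triangle to $(0,1,\infty)$ and propagating the assignment of points across the ideal triangulation lifted to the universal cover.

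One misframing in your final paragraph is worth correcting. The dual graph of the lifted ideal triangulation of the universal cover is a \emph{tree}, because the universal cover is simply connected and each lifted ideal edge separates it; hence there is a unique propagation path from $\widetilde t_0$ to any other lifted triangle, so no ``path-independence'' check is required, and no algebraic constraint is imposed on the tuple $(x_i)\in\mathbb{G}_m^{|I|}$ --- which is precisely why the coordinate map is birational and not merely dominant onto a proper subvariety. What does require a verification is that the reconstructed $\psi$ is $(\pi_1(S),\rho)$-equivariant, and this is exactly where your remark about ``$PGL_2$-invariant data'' is the correct argument: the propagation rule commutes with the $PGL_2$-action and uses only $\gamma$-invariant cross-ratio data, so $\psi\circ\gamma$ and $\rho(\gamma)\circ\psi$ agree on the vertices of $\widetilde t_0$ by the definition of $\rho(\gamma)$ and therefore agree on all lifted vertices. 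The homomorphism property of $\rho$ is then formal from this equivariance, since $\psi$ generically attains at least three distinct values. No ``snake'' computation is needed for well-definedness; in~\cite{IHES} that computation serves to express monodromy matrices in the coordinates $X_i$ (as recalled in Section~\ref{sec:TheClassicalCanonicalMap} of the present paper), not to certify consistency of the reconstruction.
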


\subsection{The classical canonical map}
\label{sec:TheClassicalCanonicalMap}

In~\cite{IHES}, Fock and Goncharov describe the following method for calculating the monodromy of a framed $PGL_2(\mathbb{C})$-local system $m$ around an oriented loop $\ell$ on~$S$. To calculate this monodromy, let us first fix an ideal triangulation $T$, and let $X_i\in\mathbb{C}^*$ be the coordinate of~$m$ corresponding to the edge $i$ of~$T$. Choose a square root $X_i^{1/2}$ for~$X_i$.

Now deform $\ell$ so that it intersects each edge of the ideal triangulation $T$ in the minimal number of points. Suppose $i_1,\dots,i_s$ are the edges of $T$ that $\ell$ intersects in order (so an edge may appear more than once on this list). After crossing the edge $i_k$, the curve $\ell$ enters a triangle $t$ of~$T$ before leaving through the next edge. If the curve $\ell$ turns to the left before leaving $t$, then we form the matrix 
\[
M_k=
\left( \begin{array}{cc}
X_{i_k}^{1/2} & X_{i_k}^{1/2} \\
0 & X_{i_k}^{-1/2} \end{array} \right).
\]
On the other hand, if $\ell$ turns to the right before leaving $t$, then we form the matrix
\[
M_k=
\left( \begin{array}{cc}
X_{i_k}^{1/2} & 0 \\
X_{i_k}^{-1/2} & X_{i_k}^{-1/2} \end{array} \right).
\]
We can then multiply the matrices $M_k$ defined in this way to get a matrix representing the monodromy:
\[
\rho(\ell)=M_1\dots M_s.
\]

\begin{proposition}
\label{prop:monodromyperipheral}
Let $\ell$ be a peripheral loop on~$S$. Then $\rho(\ell)$ has eigenvalues $X_1^{a_1}\dots X_n^{a_n}$ and $X_1^{-a_1}\dots X_n^{-a_n}$ where $a_i$ is the coordinate of~$\ell$ associated to the edge~$i$.
\end{proposition}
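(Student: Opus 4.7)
The plan is to isotope the peripheral loop $\ell$ until it becomes a small simple loop encircling the puncture $p$ to which it retracts, and then to compute the snake-matrix product $M_1\cdots M_s$ directly, using the very restricted combinatorics of such a loop.

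First I would note that the edges of $T$ incident to $p$ form a cyclic ``fan'' of triangles $t_1,\dots,t_r$ meeting at $p$, where an edge with both endpoints at $p$ contributes two distinct edge-ends to the fan and a self-folded triangle appears with $t_j=t_{j+1}$. A sufficiently small loop around $p$, oriented as $\ell$, crosses exactly these edge-ends, once each, in cyclic order, and inside each $t_j$ it enters through one edge at $p$ and exits through the adjacent edge at $p$. In particular the loop makes the same turn in every triangle --- always left if it runs counterclockwise about $p$, always right otherwise. Consequently every factor $M_k$ in the product has the same triangular shape, so $\rho(\ell)$ itself is triangular.

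Reading off the diagonal of a product of triangular matrices, the two eigenvalues of $\rho(\ell)$ are
\[
\prod_{k=1}^s X_{i_k}^{1/2}\quad\text{and}\quad \prod_{k=1}^s X_{i_k}^{-1/2}.
\]
To identify these with $\prod_i X_i^{\pm a_i}$, I would count multiplicities: the number of $k$ with $i_k=i$ is exactly the number $\mu_i(\ell)$ of intersection points of the small loop with the edge $i$, which by definition equals $2a_i(\ell)$. Hence $\prod_k X_{i_k}^{1/2}=\prod_i X_i^{a_i}$, which is the claimed eigenvalue.

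The main point requiring care is the case of folded configurations, where an edge $i$ has both endpoints at $p$ or lies inside a self-folded triangle incident to $p$: then $i$ appears twice consecutively in the list $i_1,\dots,i_s$, and one must verify that the local picture still produces two copies of the \emph{same} triangular matrix $M_k$ (so the product remains triangular) and that the small loop does realize the minimal intersection count, so that $\mu_i=2a_i$. Once these degenerate cases are dispatched, the proposition follows without further input.
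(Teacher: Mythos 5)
Your proposal is correct and follows the same approach as the paper's proof: deform $\ell$ to a small loop around the puncture, observe that it consistently turns the same way so that every $M_k$ is triangular of the same shape and hence the product is triangular, read off the diagonal, and match the multiplicities of the $i_k$ with $\mu_i(\ell) = 2a_i(\ell)$. The paper simply asserts ``we may assume $\ell$ turns always to the right'' and leaves the multiplicity count implicit, while you spell out why the small-loop picture forces a uniform turn direction and flag the self-folded/degenerate configurations, which in fact pose no difficulty since the loop still enters and exits each triangle through the two edge-ends meeting at $p$.
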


\begin{proof}
Deform the loop so that it intersects each edge in the minimal number of points. We may assume $\ell$ is oriented so that it turns always to the right. If $i_1,\dots,i_n$ are the edges of~$T$ that $\ell$ crosses in order, then we can use the construction described above to compute the matrix $\rho(\ell)$. It is given by 
\[
\prod_{k=1}^s\left( \begin{array}{cc}
X_{i_k}^{1/2} & 0 \\
X_{i_k}^{-1/2} & X_{i_k}^{-1/2} \end{array} \right)
=\left( \begin{array}{cc}
X_{i_1}^{1/2}\dots X_{i_s}^{1/2} & 0 \\
C & X_{i_1}^{-1/2}\dots X_{i_s}^{-1/2} \end{array} \right)
\]
where $C$ is a polynomial in the variables $X_{i_k}^{\pm1/2}$. The eigenvalues of this matrix are the diagonal elements.
\end{proof}

We will now use this construction to associate a canonical function $\mathbb{I}(\ell)$ to a lamination $\ell\in\mathcal{A}_{SL_2,S}(\mathbb{Z}^t)$.

\Needspace*{2\baselineskip}
\begin{definition}[\cite{IHES}, Definition~12.4]\mbox{}
\label{def:classicalcanonical}
\begin{enumerate}
\item Let $\ell\in\mathcal{A}_L(S,\mathbb{Z})$ be a lamination consisting of a single nonperipheral curve of weight~$k$. Then the value of $\mathbb{I}(\ell)$ on~$m\in\mathcal{X}_{PGL_2,S}(\mathbb{C})$ is the trace of the $k$th power of the matrix~$\rho(\ell)$ constructed above.

\item Let $\ell\in\mathcal{A}_L(S,\mathbb{Z})$ be a lamination consisting of a single peripheral curve of weight $k$. The data of $m$ provide a distinguished eigenspace of $\rho(\ell)$ with eigenvalue $\lambda_\ell$, and we define the value of $\mathbb{I}(\ell)$ at $m$ to be $\lambda_\ell^k$.

\item Let $\ell\in\mathcal{A}_L(S,\mathbb{Z})$ and write $\ell=\sum_ik_i\ell_i$ where $\ell_i$ are the curves of $\ell$ with each homotopy class of curves appearing at most once in the sum and $k_i\in\mathbb{Z}$. Then
\[
\mathbb{I}(\ell)\coloneqq\prod_i\mathbb{I}(k_i\ell_i).
\]
\end{enumerate}
\end{definition}

Thus for any choice of square roots $X_i^{1/2}$, we define $\mathbb{I}(\ell)(m)$ as a certain Laurent polynomial in the $X_i^{1/2}$. Notice that if $\ell\in\mathcal{A}_{SL_2,S}(\mathbb{Z}^t)$, then the total weight of curves intersecting an edge $i$ of the triangulation is always even. It follows that in this case $\mathbb{I}(\ell)(m)$ is a Laurent polynomial in the variables $X_i$ which is independent of the choice of square roots. Thus we get a canonical map 
\[
\mathbb{I}:\mathcal{A}_{SL_2,S}(\mathbb{Z}^t)\rightarrow\mathbb{Q}(\mathcal{X}_{PGL_2,S}).
\]
Fock and Goncharov prove the following in~\cite{IHES}.

\begin{theorem}[\cite{IHES}, Theorem~12.2]
\label{thm:classicalproperties}
The canonical functions $\mathbb{I}(\ell)$ for $\ell\in\mathcal{A}_{SL_2,S}(\mathbb{Z}^t)$ satisfy the following properties:
\begin{enumerate}
\item For any choice of ideal triangulation, $\mathbb{I}(\ell)$ is a Laurent polynomial in the coordinates $X_i$ with highest term $X_1^{a_1}\dots X_n^{a_n}$ where $a_i$ is the coordinate of $\ell$ associated to the edge~$i$.

\item The coefficients of the Laurent polynomial $\mathbb{I}(\ell)$ are positive integers.

\item For any laminations $\ell$,~$\ell'\in\mathcal{A}_{SL_2,S}(\mathbb{Z}^t)$, we have 
\[
\mathbb{I}(\ell)\mathbb{I}(\ell')=\sum_{\ell''\in\mathcal{A}_{SL_2,S}(\mathbb{Z}^t)}c(\ell,\ell';\ell'')\mathbb{I}(\ell'')
\]
where $c(\ell,\ell';\ell'')$ are nonnegative integers and only finitely many terms are nonzero.
\end{enumerate}
\end{theorem}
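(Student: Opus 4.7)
The plan is to handle the three claims in increasing order of difficulty, with the product expansion the crux. For a single nonperipheral simple closed curve $\ell$ of weight $k$, we have $\mathbb{I}(\ell)=\Tr(\rho(\ell)^{k})$ where $\rho(\ell)=M_{1}\cdots M_{s}$ is the product from Section~\ref{sec:TheClassicalCanonicalMap}. Each $M_{j}$ has entries that are monomials in $X_{i_{j}}^{\pm 1/2}$ with coefficients in $\{0,1\}$, so every entry of any product of these matrices, and a fortiori the trace, is a Laurent polynomial in the $X_{i}^{1/2}$ with nonnegative integer coefficients. Since $\ell\in\mathcal{A}_{SL_{2},S}(\mathbb{Z}^{t})$ forces every edge to be crossed an even number of times, the exponents are integral and $\mathbb{I}(\ell)\in\mathbb{Z}_{\ge 0}[X_{i}^{\pm 1}]$. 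Property~(2) for multi-curve laminations then follows from the product formula in Definition~\ref{def:classicalcanonical}~(3) together with Proposition~\ref{prop:monodromyperipheral} for the peripheral factors.

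For the highest-term assertion~(1), observe that among the two entries in each column of $M_{j}$, the diagonal entry $X_{i_{j}}^{1/2}$ is the unique one with a positive half-exponent. The dominant monomial of $\rho(\ell)^{k}$ therefore comes from the product of the diagonal entries and equals $\prod_{i}X_{i}^{k\mu_{i}/2}=\prod_{i}X_{i}^{ka_{i}}$. Peripheral components are handled directly by Proposition~\ref{prop:monodromyperipheral}, and the general case follows by multiplying highest terms of the factors in Definition~\ref{def:classicalcanonical}~(3).

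For part~(3), the tool is the $SL_{2}$ trace identity
\[
\Tr(A)\Tr(B)=\Tr(AB)+\Tr(AB^{-1}),
\]
a consequence of Cayley--Hamilton, which holds for any $A,B\in SL_{2}$. Geometrically, applying this identity at a transverse intersection of two loops produces the two ``skein resolutions'' of the crossing, both with coefficient $+1$. Given laminations $\ell$ and $\ell'$, realize their curves in minimal position and iteratively apply the identity at each intersection point. Each application strictly decreases the total intersection number, so the procedure terminates after finitely many steps in a $\mathbb{Z}_{\ge 0}$-linear combination of products of traces of disjoint simple closed curves. Contractible loops contribute $\Tr(I)=2$ and can be absorbed into the coefficient, while $\Tr(A^{-1})=\Tr(A)$ for $A\in SL_{2}$ handles orientation reversals; peripheral components are evaluated via Proposition~\ref{prop:monodromyperipheral}. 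The output is therefore a finite sum $\sum c(\ell,\ell';\ell'')\mathbb{I}(\ell'')$ with nonnegative integer coefficients.

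The main obstacle is part~(3), specifically verifying that the resolutions yield honest laminations in $\mathcal{A}_{SL_{2},S}(\mathbb{Z}^{t})$ rather than half-integer configurations, and that the resulting curves are genuinely simple and pairwise disjoint so that the product formula of Definition~\ref{def:classicalcanonical}~(3) applies. Both issues reduce to careful parity bookkeeping of the edge-crossing numbers under skein resolution, and this is the technical crux of the argument.
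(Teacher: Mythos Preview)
This theorem is not proved in the present paper: it is quoted verbatim from~\cite{IHES}, Theorem~12.2, and the paper relies on it as input. So there is no in-paper proof to compare your proposal against.

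That said, your sketch has the right shape but one genuine gap in part~(3). After resolving all crossings you arrive at a $\mathbb{Z}_{\ge0}$-combination of products $\prod_j\Tr(\rho(\gamma_j))$ over disjoint simple closed curves, and you then assert that each such product \emph{is} an $\mathbb{I}(\ell'')$. This fails whenever several of the $\gamma_j$ are homotopic. By Definition~\ref{def:classicalcanonical}, for a nonperipheral curve $\gamma$ with weight $k$ one has $\mathbb{I}(k\gamma)=\Tr(\rho(\gamma)^{k})$, not $\Tr(\rho(\gamma))^{k}$; the two differ exactly by a Chebyshev substitution (Corollary~\ref{cor:classicalChebyshev}). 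To pass from powers of traces to $\mathbb{I}$'s you need the inverse relation $t^{k}=\sum_{b} c_{k,b}\,F_{b}(t)$ with $c_{k,b}\in\mathbb{Z}_{\ge0}$, which is the content of Lemma~\ref{lem:inverseChebyshev} in this paper. Similarly, a peripheral loop $\gamma$ arising from a resolution contributes $\Tr(\rho(\gamma))=\mathbb{I}(\gamma)+\mathbb{I}(-\gamma)$ (two terms, not one), as in Lemma~\ref{lem:traceperipheral}. Without these two ingredients the passage from ``products of traces'' to ``$\mathbb{Z}_{\ge0}$-combinations of $\mathbb{I}(\ell'')$'' is not justified.

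Your final paragraph correctly flags the parity/integrality issue for~(3) but stops short of resolving it. The clean argument is that in each matrix $M_j$ every nonzero entry is $X_{i_j}^{\pm1/2}$, so any monomial appearing in the trace differs from the highest term $\prod X_i^{a_i}$ by a product of factors $X_{i_j}^{-1}$; hence all exponents have the same parity as the highest term, and integrality of the $a_i$ propagates to every term. The same observation shows that resolving a crossing does not change the parity of edge-crossing numbers, so each $\ell''$ produced by the skein process again lies in $\mathcal{A}_{SL_2,S}(\mathbb{Z}^t)$.
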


If $\ell\in\mathcal{A}_L(S,\mathbb{Z})$, it is still true that the highest term of $\mathbb{I}(\ell)(m)$ is $X_1^{a_1}\dots X_n^{a_n}$, but in this case the $a_i$ may be only half integers.

\subsection{The Chekhov-Fock algebra}

We now define a quantum version of the space~$\mathcal{X}_{PGL_2,S}$. Let $T$ be an ideal triangulation of~$S$, and let $t_1,\dots,t_m$ be the triangles of $T$. Label the sides of $t_i$ by $e_{i1}$,~$e_{i2}$, and~$e_{i3}$ in such a way that these edge occur in the clockwise order around $t_i$.

\begin{definition}
The \emph{triangle algebra} $\mathcal{T}_{t_i}^q$ is an algebra over $\mathbb{C}$ generated by three elements $X_{i1}$,~$X_{i2}$, and~$X_{i3}$ and their inverses, subject to the relations 
\[
X_{i1}X_{i2}=q^2X_{i2}X_{i1}, \quad X_{i2}X_{i3}=q^2X_{i3}X_{i2}, \quad X_{i3}X_{i1}=q^2X_{i1}X_{i3}.
\]
\end{definition}

We think of the generator $X_{ij}$ of this definition as being associated to the side $e_{ij}$ of~$t_i$. We can now form the tensor product algebra $\bigotimes_{i=1}^m\mathcal{T}_{t_i}^q$. By convention, when describing an element $x_1\otimes\dots\otimes x_m$ of this algebra, we omit those factors $x_i$ that equal the identity element~1 in~$\mathcal{T}_{t_i}^q$. We now associate to each edge $e$ of the ideal triangulation an element of this tensor product as follows:
\begin{enumerate}
\item If $e$ separates two triangles $t_i$ and $t_j$, and if $X_{ia}\in\mathcal{T}_{t_i}^q$ and $X_{jb}\in\mathcal{T}_{t_j}^q$ are the generators associated to the sides of $t_i$ and $t_j$ corresponding to $e$, then we consider the element $X_e=X_{ia}\otimes X_{jb}$.

\item If $e$ corresponds to two sides of the same triangle~$t_i$, and if $X_{ia}$ and~$X_{ib}\in\mathcal{T}_{t_i}^q$ are the generators associated to these two sides and $X_{ia}$ is the side that comes first when going counterclockwise around their common vertex, then we consider the element $X_e=q^{-1}X_{ia}X_{ib}$.
\end{enumerate}

\begin{definition}
The \emph{Chekhov-Fock algebra} $\mathcal{X}_T^q$ is the subalgebra of $\bigotimes_{i=1}^m\mathcal{T}_{t_i}^q$ generated by the elements $X_e$ and their inverses.
\end{definition}

The generators of the Chekhov-Fock algebra satisfy the relations 
\[
X_iX_j=q^{2\varepsilon_{ij}}X_jX_i.
\]
One can also check that this algebra satisfies the Ore condition from ring theory, and hence we can form its noncommutative fraction field $\widehat{\mathcal{X}}_T^q$. The following result, originally due to Chekhov and Fock~\cite{CF}, provides a way of relating the division rings $\widehat{\mathcal{X}}_T^q$ for different ideal triangulations $T$.

\begin{theorem}
Let $T$ and $T'$ be two ideal triangulations of $S$. Then there exists an algebra isomorphism 
\[
\Phi_{TT'}^q:\widehat{\mathcal{X}}_{T'}^q\rightarrow\widehat{\mathcal{X}}_T^q
\]
such that $\Phi_{TT''}^q=\Phi_{TT'}^q\circ\Phi_{T'T''}^q$ for triangulations $T$, $T'$, and~$T''$.
\end{theorem}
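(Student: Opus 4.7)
The plan is to follow the standard Chekhov--Fock argument, reducing the construction to the case where $T$ and $T'$ differ by a single flip and then verifying that the resulting composition is independent of the chosen sequence of flips. First, I would invoke the classical fact that any two ideal triangulations of $S$ are connected by a finite sequence of flips: moves in which one replaces the diagonal of a quadrilateral formed by two adjacent triangles by the other diagonal. Thus it suffices to define $\Phi_{TT'}^q$ when $T'$ is obtained from $T$ by flipping a single edge~$k$, and then to extend by composition.

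For a single flip along edge $k$, I would write down an explicit formula for $\Phi_{TT'}^q$ on the generators: the edges not incident to the flipped quadrilateral map identically, while the primed generators attached to the four boundary edges and the new diagonal are prescribed by a quantum mutation formula of the shape $X_i'\mapsto X_i\prod_{\pm}(1+q^{2r}X_k^{\pm})^{\pm \varepsilon_{ik}}$, with the new diagonal variable $X_k'$ mapping to $X_k^{-1}$ (up to a $q$-power correction). I would then check directly that these images satisfy the defining $q$-commutation relations of $\mathcal{X}_{T'}^q$, which requires knowing how $\varepsilon_{ij}$ transforms under a flip; because the $1+q^{2r}X_k^{\pm1}$ factors lie in the Ore localization $\widehat{\mathcal{X}}_T^q$, the formulas give a well-defined homomorphism of skew fields, and the involutive nature of a flip produces an inverse, so $\Phi_{TT'}^q$ is an isomorphism.

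For arbitrary $T,T'$ I would define $\Phi_{TT'}^q$ as a composition of single-flip isomorphisms along any chosen sequence. The main obstacle, and the substantive content of the theorem, is proving that this composition does not depend on the chosen sequence. By a theorem of Penner (and of Harer), the groupoid of ideal triangulations is generated by flips subject to two types of relations: commutativity of flips on disjoint edges, and a pentagon relation associated with five successive flips in a pentagon region. The disjoint-flip relation is immediate because the corresponding mutation formulas act on disjoint sets of generators. The pentagon relation is the genuinely nontrivial identity: one must verify that a cycle of five flips in a pentagon induces the identity on $\widehat{\mathcal{X}}_T^q$. This is a direct, if somewhat intricate, computation with the quantum dilogarithm-type factors $1+q^{2r}X^{\pm1}$, and once both relations are verified the map $\Phi_{TT'}^q$ is independent of the flip sequence.

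Given independence of the sequence, the cocycle identity $\Phi_{TT''}^q=\Phi_{TT'}^q\circ\Phi_{T'T''}^q$ is automatic: concatenating a flip sequence from $T''$ to $T'$ with one from $T'$ to $T$ yields a flip sequence from $T''$ to $T$, and both sides of the identity are the associated composition of single-flip isomorphisms. I expect the hardest single step to be the pentagon verification; the existence and commutation-relation checks for the single-flip map are routine once the explicit formula is in hand, and connectedness of the flip graph is quoted from the literature on the Ptolemy groupoid.
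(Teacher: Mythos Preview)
The paper does not supply a proof of this theorem at all: it is stated as a known result, attributed to Chekhov and Fock with a citation to~\cite{CF}, and is used as background for the constructions that follow. So there is no ``paper's own proof'' to compare against; your task here was effectively to reconstruct the argument from the cited literature.

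Your outline is the standard one and is essentially correct. A couple of points worth tightening if you actually write it out. First, the explicit mutation formula you sketch is schematic; in practice one writes it as conjugation by the quantum dilogarithm $\Psi^q(X_k)$ composed with a monomial transformation, which makes both the commutation-relation check and the pentagon identity cleaner (the pentagon becomes the five-term identity for $\Psi^q$). Second, depending on whether one works with labeled or unlabeled triangulations, the flip-graph relations include not just disjoint commutativity and the pentagon, but also the involutivity relation (flipping the same edge twice returns the original triangulation) and, in some conventions, a square relation coming from relabeling; one should state clearly which presentation of the Ptolemy groupoid is being invoked. With those refinements, your plan reproduces the Chekhov--Fock argument the paper is citing.
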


In the present paper, we will also consider the Chekhov-Fock algebra associated to a fourth root $\omega=q^{1/4}$ of~$q$. To avoid confusion, we will denote this algebra by $\mathcal{Z}_T^\omega$ instead of~$\mathcal{X}_T^\omega$, and we will denote the generators of this algebra by $Z_i$ instead of $X_i$. There is a natural embedding of $\mathcal{X}_T^q$ into this algebra which maps the generator~$X_i$ to~$Z_i^2$. Thus we can think of $Z_i$ as a formal square root of $X_i$.

One would like to have maps relating the algebras $\mathcal{Z}_T^\omega$ for different ideal triangulations~$T$. As explained in~\cite{BonahonWong}, for each ideal triangulation $T$ there is a subalgebra $\widehat{\mathcal{Z}}_T^\omega\subseteq\widehat{\mathcal{X}}_T^\omega$ of the fraction field of $\mathcal{X}_T^\omega$ which is well behaved with respect to change of the triangulation, and one has the following result:

\begin{theorem}[\cite{BonahonWong}, Theorem~6]
Let $T$ and $T'$ be two ideal triangulations of~$S$. When $q=\omega^4$, there is an algebra isomorphism 
\[
\Theta_{TT'}^\omega:\widehat{\mathcal{Z}}_{T'}^\omega\rightarrow\widehat{\mathcal{Z}}_T^\omega
\]
extending the map $\Phi_{TT'}^q:\widehat{\mathcal{X}}_{T'}^q\rightarrow\widehat{\mathcal{X}}_T^q$ and having the property that $\Theta_{TT''}^\omega=\Theta_{TT'}^\omega\circ\Theta_{T'T''}^\omega$ for triangulations $T$, $T'$, and~$T''$.
\end{theorem}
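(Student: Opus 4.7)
The plan is to mimic the construction of the classical Chekhov--Fock coordinate changes $\Phi_{TT'}^q$ one diagonal exchange at a time and then verify the relations needed for a well-defined groupoid action. By a theorem of Penner, any two ideal triangulations $T$ and $T'$ of $S$ are connected by a finite sequence of flips, so it suffices to define $\Theta_{TT'}^\omega$ whenever $T'$ is obtained from $T$ by a single flip along an edge and then extend to arbitrary pairs by composition. In order for this extension to be independent of the chosen sequence of flips (and hence to automatically satisfy the cocycle identity $\Theta_{TT''}^\omega = \Theta_{TT'}^\omega \circ \Theta_{T'T''}^\omega$), it is enough to check two relations among the single-flip maps: the involutivity relation (two flips on the same edge compose to the identity, up to relabeling), and the pentagon relation (the five flips one performs around a pentagonal configuration of triangles compose to the appropriate permutation of coordinates).

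For the single-flip step, I would write down an explicit formula lifting the classical Chekhov--Fock flip formula for $\Phi^q$ to the level of $\omega$. The classical formula expresses the new edge variable $X_{e'}$ and the variables $X_a, X_b, X_c, X_d$ surrounding the quadrilateral as rational functions in the old variables; substituting $X_i = Z_i^2$ and rearranging using the $\omega$-commutation relations, one finds that these expressions admit natural ``square root'' refinements in the $Z_i$ themselves, after adjoining certain specific rational elements. This dictates the definition of the subalgebra $\widehat{\mathcal{Z}}_T^\omega \subseteq \widehat{\mathcal{X}}_T^\omega$: take the smallest subalgebra containing $Z_i^{\pm 1}$ which is closed under all such flip formulas and their inverses. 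That the resulting map $\widehat{\mathcal{Z}}_{T'}^\omega \to \widehat{\mathcal{Z}}_T^\omega$ is an algebra homomorphism (rather than merely a set map) is then a direct check: one verifies that the images of the generators $Z_{e'}$ satisfy the commutation relations $Z_i Z_j = \omega^{2 \varepsilon'_{ij}} Z_j Z_i$ dictated by the triangulation $T'$, using only the $\omega$-commutation relations in $\mathcal{Z}_T^\omega$.

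The main obstacle will be verifying the pentagon identity at the $\omega$ level. At the $q$ level it is a classical verification of Chekhov and Fock that the composition of the five flip maps equals the expected permutation of coordinates. At the $\omega$ level, however, this lift is delicate: because the flip formula involves square roots, the composite around the pentagon a priori only agrees with the target on the image of $\mathcal{X}^q$, and one must ensure that the various branches of the square roots are chosen consistently so that equality holds in $\widehat{\mathcal{Z}}_T^\omega$ rather than just in $\widehat{\mathcal{X}}_T^q$. This amounts to a finite but laborious computation in the pentagonal configuration, tracking the $\omega$-phases contributed by each of the five intermediate flips and using the precise definition of $\widehat{\mathcal{Z}}_T^\omega$ adopted above. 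The involutivity relation is by contrast essentially local, taking place inside a single quadrilateral, and is much easier to check by direct substitution.

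Once these two relations are established, the concatenation of single-flip maps gives a well-defined isomorphism $\Theta_{TT'}^\omega: \widehat{\mathcal{Z}}_{T'}^\omega \to \widehat{\mathcal{Z}}_T^\omega$ for any pair of triangulations; its inverse is $\Theta_{T'T}^\omega$, the cocycle identity holds by construction, and restriction to the subalgebra $\widehat{\mathcal{X}}^q \subseteq \widehat{\mathcal{Z}}^\omega$ recovers $\Phi_{TT'}^q$ since the individual flip formulas were defined as lifts of the classical ones.
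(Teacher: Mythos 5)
The paper gives no proof of this statement: it is cited verbatim as Theorem~6 of Bonahon and Wong~\cite{BonahonWong}, so there is no internal argument to compare against. Evaluated on its own terms, your flip-based outline is the standard strategy, but there is a genuine gap at the central step. You claim the classical flip formula for $\Phi^q$ admits ``natural square-root refinements in the $Z_i$ themselves,'' and you define $\widehat{\mathcal{Z}}_T^\omega$ as the smallest subalgebra closed under these lifted flips. This cannot work as stated. After a flip on an edge $e$, a variable $X_a$ on the boundary of the flipped quadrilateral is sent to an expression of the form $X_a(1 + qX_e^{\pm1})$, and the corresponding element $Z_a^2(1 + qZ_e^{\pm2})$ has no square root in the skew field of fractions of $\mathcal{Z}_T^\omega$. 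Hence the individual generators $Z_i'$ of $\mathcal{Z}_{T'}^\omega$ do \emph{not} lift, and the ``smallest closed subalgebra'' you posit is ill-defined: it would either have to leave the fraction field or collapse back to $\widehat{\mathcal{X}}_T^q$.

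The missing idea is the \emph{balancing} condition, which is how Bonahon and Wong define $\widehat{\mathcal{Z}}_T^\omega$ intrinsically for each triangulation: it is the subalgebra of the fraction skew field generated by $\widehat{\mathcal{X}}_T^q$ together with those Laurent monomials $Z_1^{k_1}\cdots Z_n^{k_n}$ for which, for every triangle of $T$, the sum of the exponents on its three sides is even. One then shows that the $q$-level flip $\Phi^q$ carries the square of a balanced monomial for $T'$ to the square of a balanced monomial for $T$ up to a determinable power of $\omega$, so a canonical square root exists precisely on this restricted domain, and this yields $\Theta_{TT'}^\omega$. Involutivity and the pentagon identity then need to be checked only on a generating set of balanced monomials on top of the already-known relations for $\Phi^q$, which is considerably lighter than the blind branch-tracking you anticipated. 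Without the balancing restriction your proposal has no viable candidate for the domain and codomain of $\Theta$, and that is the heart of the theorem.
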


Let us now consider the semiring $\mathbb{Z}_{\geq0}[\omega,\omega^{-1}]$ of Laurent polynomials in~$\omega$ with positive integral coefficients.

\begin{lemma}
\label{lem:quantize1}
If $c$ is an element of $\mathbb{Z}_{\geq0}[\omega,\omega^{-1}]$ whose classical limit is~1, then $c=\omega^m$ for some integer~$m$.
\end{lemma}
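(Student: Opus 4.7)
The plan is to unpack the definition of $\mathbb{Z}_{\geq 0}[\omega,\omega^{-1}]$ directly and exploit the fact that a sum of nonnegative integers equals one only when exactly one summand is one and the rest are zero.

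First I would write a general element $c \in \mathbb{Z}_{\geq 0}[\omega, \omega^{-1}]$ uniquely as a finite sum
\[
c = \sum_{i \in \mathbb{Z}} a_i \, \omega^i, \qquad a_i \in \mathbb{Z}_{\geq 0},
\]
where only finitely many coefficients $a_i$ are nonzero. The classical limit is by definition obtained by specializing $\omega = 1$, so the hypothesis says
\[
\sum_{i \in \mathbb{Z}} a_i = 1.
\]

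Next I would observe that this is a sum of nonnegative integers equal to $1$, so precisely one of the coefficients equals $1$ and all of the others vanish. Calling $m$ the unique index with $a_m = 1$, we conclude $c = \omega^m$, as required.

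There is essentially no obstacle here; the lemma is purely a bookkeeping statement about how the positivity hypothesis rigidifies the coefficients. The only subtlety worth flagging is that the conclusion genuinely uses both pieces of the assumption on the coefficient semiring, namely \emph{integrality} (to force each $a_i \in \{0,1\}$ once the sum is $1$) and \emph{nonnegativity} (to rule out cancellations like $2 + (-1) = 1$); dropping either hypothesis would allow elements such as $2\omega - \omega^{-1}$ or $\omega + \omega^{-1} - \omega^2$, which specialize to $1$ but are not monomials.
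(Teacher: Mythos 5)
Your proof is correct and matches the paper's argument essentially verbatim: expand $c$ with nonnegative integer coefficients, set $\omega=1$ to get $\sum a_i = 1$, and conclude exactly one $a_i$ equals $1$. The closing remark about why both integrality and nonnegativity are needed is a nice addition but not in the paper.
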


\begin{proof}
Since $c$ lies in $\mathbb{Z}_{\geq0}[\omega,\omega^{-1}]$, we can write $c=\sum_ic_i\omega^i$ where the sum is finite and $c_i\in\mathbb{Z}_{\geq0}$. Since we have $c=1$ in the classical limit, we know $\sum_ic_i=1$. Since each of the $c_i$ is a nonnegative integer, it follows that $c_m=1$ for some $m$, and all of the other $c_i$ vanish. Thus $c=\omega^m$ for some $m$.
\end{proof}

The following is a canonical way of forming the product of a collection of noncommutative factors in the Chekhov-Fock algebra.

\begin{definition}
If $x_1,\dots,x_n$ are elements of an algebra such that $x_ix_j=\omega^{2\varepsilon_{ij}}x_jx_i$, then the \emph{Weyl ordering} of the monomial $x_1\dots x_n$ is the monomial 
\[
[x_1\dots x_n]=\omega^{-\sum_{i<j}\varepsilon_{ij}}x_1\dots x_n.
\]
\end{definition}

The importance of this formula stems from the fact that it is invariant under all permutations of the $x_i$.

\begin{lemma}
\label{lem:Weyllimit}
Let $*$ be the canonical involutive antiautomorphism of $\mathcal{Z}_T^\omega$. If $c\cdot Z_1^{\mu_1}\dots Z_n^{\mu_n}$ with $c\in\mathbb{Z}_{\geq0}[\omega,\omega^{-1}]$ is a monomial which is $*$-invariant and has classical limit $Z_1^{\mu_1}\dots Z_n^{\mu_n}$, then $c\cdot Z_1^{\mu_1}\dots Z_n^{\mu_n}=[Z_1^{\mu_1}\dots Z_n^{\mu_n}]$.
\end{lemma}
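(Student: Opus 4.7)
The plan is to combine Lemma~\ref{lem:quantize1} with the $*$-invariance hypothesis to pin down $c$ as a single power of $\omega$ whose exponent is forced by the commutation relations to match the Weyl ordering factor.

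First I would observe that since $c\cdot Z_1^{\mu_1}\dots Z_n^{\mu_n}$ has classical limit $Z_1^{\mu_1}\dots Z_n^{\mu_n}$, the coefficient $c$ satisfies $c|_{\omega=1}=1$. Combined with the hypothesis $c\in\mathbb{Z}_{\geq0}[\omega,\omega^{-1}]$, Lemma~\ref{lem:quantize1} immediately gives $c=\omega^m$ for some integer $m$. Next, using that $*$ is an antiautomorphism which fixes every $Z_i$ and sends $\omega\mapsto\omega^{-1}$, I would compute
\[
*(c\cdot Z_1^{\mu_1}\dots Z_n^{\mu_n})=Z_n^{\mu_n}Z_{n-1}^{\mu_{n-1}}\dots Z_1^{\mu_1}\cdot\omega^{-m}.
\]
Iterating the commutation relation in the form $Z_j^{\mu_j}Z_i^{\mu_i}=\omega^{-2\varepsilon_{ij}\mu_i\mu_j}Z_i^{\mu_i}Z_j^{\mu_j}$ for $i<j$ to reorder the reversed product back into standard position produces a prefactor of $\omega^{-2\sum_{i<j}\varepsilon_{ij}\mu_i\mu_j}$. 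Equating $*(c\cdot Z_1^{\mu_1}\dots Z_n^{\mu_n})$ with $c\cdot Z_1^{\mu_1}\dots Z_n^{\mu_n}$ and reading off the exponent of $\omega$ then yields $m=-\sum_{i<j}\varepsilon_{ij}\mu_i\mu_j$.

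Finally I would identify this exponent with the Weyl ordering prefactor. Expanding $Z_1^{\mu_1}\dots Z_n^{\mu_n}$ as a product of $\sum_i\mu_i$ individual generators and applying the definition of the Weyl ordering directly, the pairs of factors drawn from the same $Z_i$-block contribute nothing (since $\varepsilon_{ii}=0$), while each pair of factors drawn from the $Z_i$- and $Z_j$-blocks with $i<j$ contributes $\varepsilon_{ij}$, for a total exponent of $-\sum_{i<j}\mu_i\mu_j\varepsilon_{ij}$. This matches the $m$ computed from the $*$-invariance, giving $c\cdot Z_1^{\mu_1}\dots Z_n^{\mu_n}=[Z_1^{\mu_1}\dots Z_n^{\mu_n}]$. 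No part of the argument looks especially delicate; the only care required is the combinatorial bookkeeping when matching the exponent produced by the antiautomorphism against the one dictated by the Weyl ordering definition.
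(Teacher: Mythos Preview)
Your proof is correct and follows essentially the same route as the paper: invoke Lemma~\ref{lem:quantize1} to get $c=\omega^m$, use $*$-invariance together with the commutation relations to force $m=-\sum_{i<j}\varepsilon_{ij}\mu_i\mu_j$, and identify this with the Weyl ordering prefactor. The only cosmetic difference is that the paper reads off $[Z_1^{\mu_1}\dots Z_n^{\mu_n}]=\omega^{-\sum_{i<j}\varepsilon_{ij}\mu_i\mu_j}Z_1^{\mu_1}\dots Z_n^{\mu_n}$ directly from the definition applied to the $n$ elements $Z_i^{\mu_i}$, whereas you expand into individual generators---your phrasing ``a product of $\sum_i\mu_i$ individual generators'' tacitly assumes $\mu_i\geq0$, but the computation is unaffected once you allow $Z_i^{-1}$ factors or simply apply the definition to the $Z_i^{\mu_i}$ as the paper does.
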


\begin{proof}
Let $c\cdot Z_1^{\mu_1}\dots Z_n^{\mu_n}$ be an expression satisfying the hypotheses of the lemma. Then the classical limit of $c\in\mathbb{Z}_{\geq0}[\omega,\omega^{-1}]$ is~1, so by Lemma~\ref{lem:quantize1}, we can write $c=\omega^m$ for some integer~$m$. Since the expression is $*$-invariant, we have $*(c\cdot Z_1^{\mu_1}\dots Z_n^{\mu_n})=c\cdot Z_1^{\mu_1}\dots Z_n^{\mu_n}$, or equivalently
\[
\omega^{-m} Z_n^{\mu_n}\dots Z_1^{\mu_1}=\omega^m Z_1^{\mu_1}\dots Z_n^{\mu_n}.
\]
We have 
\begin{align*}
Z_n^{\mu_n}\dots Z_1^{\mu_1} &= \omega^{-2\sum_{j=2}^n\varepsilon_{1j}\mu_1\mu_j} Z_1^{\mu_1}(Z_n^{\mu_n}\dots Z_2^{\mu_2}) \\
&= \omega^{-2\sum_{j=2}^n\varepsilon_{1j}\mu_1\mu_j} \omega^{-2\sum_{j=3}^n\varepsilon_{2j}\mu_2\mu_j} Z_1^{\mu_1}Z_2^{\mu_2} (Z_n^{\mu_n}\dots Z_3^{\mu_3}) \\
&= \dots \\
&= \omega^{-2\sum_{i<j}\varepsilon_{ij}\mu_i\mu_j} Z_1^{\mu_1}\dots Z_n^{\mu_n},
\end{align*}
and thus $*$-invariance is equivalent to 
\[
\omega^{-m} \omega^{-2\sum_{i<j}\varepsilon_{ij}\mu_i\mu_j} Z_1^{\mu_1}\dots Z_n^{\mu_n} = \omega^m Z_1^{\mu_1}\dots Z_n^{\mu_n}.
\]
Equating coefficients, we find that $-m-2\sum_{i<j}\varepsilon_{ij}\mu_i\mu_j=m$, or equivalently 
\[
m=-\sum_{i<j}\varepsilon_{ij}\mu_i\mu_j.
\]
Hence 
\[
c\cdot Z_1^{\mu_1}\dots Z_n^{\mu_n}=\omega^{-\sum_{i<j}\varepsilon_{ij}\mu_i\mu_j} Z_1^{\mu_1}\dots Z_n^{\mu_n}
\]
is the Weyl ordering of the classical limit $Z_1^{\mu_1}\dots Z_n^{\mu_n}$.
\end{proof}

\subsection{The skein algebra}

In this section, we review the skein algebra of a surface, following~\cite{BonahonWong}.

\begin{definition}
\label{def:framedlink}
If $K$ is a one-dimensional submanifold of a three-dimensional manifold~$M$, then a \emph{framing} for~$K$ is a continuous map that assigns to each point $p\in K$ a vector $v_p\in T_pM$ such that $v_p$ is transverse to~$K$. For any surface $S$, a \emph{framed link} $K\subseteq S\times[0,1]$ is a 1-dimensional submanifold with framing such that 
\begin{enumerate}
\item $\partial K=K\cap\partial(S\times[0,1])$ consists of finitely many points in $(\partial S)\times[0,1]$.

\item At every point of~$\partial K$, the framing is vertical, that is, parallel to the $[0,1]$ factor and pointing towards~1.

\item For every component $\gamma$ of~$\partial S$, the points of~$\partial K$ that are in $\gamma\times[0,1]$ lie at different elevations, that is, at different points of the $[0,1]$ factor.
\end{enumerate}
An isotopy of framed links is required to respect all three conditions. 
\end{definition}

When drawing pictures of framed links projected to the surface, it is useful to adopt the conventions of Bonahon and Wong~\cite{BonahonWong}. Suppose $S$ is a decorated surface and $\gamma$ is a segment of $\partial S$ connecting adjacent marked points. If $K\subseteq S\times[0,1]$ is a framed link, then the points of $(\partial K)\cap(\gamma\times[0,1])$ are ordered by their elevation. To capture this information in pictures, we choose an arbitrary orientation of the edge $\gamma$. We then have two orderings of the set $(\partial K)\cap(\gamma\times[0,1])$. One is the order given by the elevations, and the other is given by the orientation of $\gamma$ if we identify each point of $(\partial K)\cap(\gamma\times[0,1])$ with its projection to~$\gamma$. After an isotopy of $K$, we can arrange that these two orderings coincide. We can then represent the link by drawing its projection to~$S$, and we can recover the ordering of $(\partial K)\cap(\gamma\times[0,1])$ by elevation using the chosen orientation of $\gamma$. Note that if we reverse the orientation of $\gamma$, then we must modify the projection of $K$ by applying a half twist as illustrated below:
\[
\xy 0;/r.50pc/: 
(10,8)*{}="C";
(10,-8)*{}="C1";
(0,3)*{}="X1"; 
(10,3)*{}="Y1";
(0,-3)*{}="X2"; 
(10,-3)*{}="Y2";
{"C1"\ar@{->}"C"},
"X1";"Y1" **\crv{(0,3) & (10,3)};
"X2";"Y2" **\crv{(0,-3) & (10,-3)};
\endxy
\quad
\longrightarrow
\quad
\xy 0;/r.50pc/: 
(10,8)*{}="C";
(10,-8)*{}="C1";
(0,3)*{}="X"; 
(10,3)*{}="Y";
{"C"\ar@{->}"C1"},
\htwist~{(0,3)}{(10,3)}{(0,-3)}{(10,-3)}; 
\endxy
\]

\begin{definition}
We write $\mathcal{K}(S)$ for the vector space over~$\mathbb{C}$ generated by isotopy classes of framed links in $S\times[0,1]$. If $K_1$ and $K_2$ are elements of $\mathcal{K}(S)$, we write $K_1K_2$ for the element defined as the union of $K_1$ and $K_2$ with $K_1$ rescaled to lie in $S\times[0,\frac{1}{2}]$ and $K_2$ rescaled to lie in $S\times[\frac{1}{2},1]$. The space $\mathcal{K}(S)$ equipped with this operation is called the \emph{framed link algebra}.
\end{definition}

\begin{definition}
Three links $K_1$, $K_0$, and~$K_\infty$ form a \emph{Kauffman triple} if they coincide everywhere except over a small disk in~$S$ where they look like the pictures below equipped with the vertical framing:
\[
\stackrel{
\xygraph{
    !{0;/r3.0pc/:}
    [u(0.5)]!{\xoverv}
}}
{K_1}
\qquad
\stackrel{
\xygraph{
    !{0;/r3.0pc/:}
    [u(0.5)]!{\xunoverh}
}}
{K_0}
\qquad
\stackrel{
\xygraph{
    !{0;/r3.0pc/:}
    [u(0.5)]!{\xunoverv}
}}
{K_\infty}
\]
The \emph{skein algebra} $\mathcal{S}^A(S)$ is the quotient of the framed link algebra $\mathcal{K}(S)$ by the two-sided ideal generated by all elements $K_1-A^{-1}K_0-AK_\infty$ where $(K_1,K_0,K_\infty)$ ranges over all Kauffman triples. An element $[K]\in\mathcal{S}^A(S)$ represented by a framed link $K$ is called a \emph{skein}.
\end{definition}

By construction, the elements of $\mathcal{S}^A(S)$ satisfy the \emph{skein relation}
\[
[K_1]=A^{-1}[K_0]+A[K_\infty]
\]
whenever $(K_0,K_1,K_\infty)$ is a Kauffman triple.

In what follows, we will often consider skeins equipped with additional data:

\begin{definition}
A \emph{state} for a skein $[K]\in\mathcal{S}^A(S)$ is a map $s:\partial K\rightarrow\{+,-\}$ which associates a sign $\pm$ to each point of~$\partial K$. A \emph{stated skein} is a skein equipped with a state, and we write $\mathcal{S}_{\mathrm{s}}^A(S)$ for the algebra generated by stated skeins.
\end{definition}

In addition to the superposition operation for multiplying skeins, there is a gluing operation that we can use to combine two skeins to get a third. Suppose that $S_1$ and $S_2$ are decorated surfaces equipped with ideal triangulations $T_1$ and $T_2$, respectively. Let $e_1\subseteq\partial S_1$ and $e_2\subseteq\partial S_2$ be boundary edges of the triangulations, and suppose that we glue the surfaces $S_1$ and $S_2$ along $e_1$ and $e_2$ to obtain a new decorated surface $S$ with an ideal triangulation~$T$. We allow the ``self-gluing'' case $S_1=S_2$ as long as the edges $e_1$ and $e_2$ are distinct.

Now suppose we are given skeins $[K_1]\in\mathcal{S}^A(S_1)$ and $[K_2]\in\mathcal{S}^A(S_2)$ such that $K_1\cap(e_1\times[0,1])$ and $K_2\cap(e_2\times[0,1])$ have the same number of points. By applying an isotopy of framed links, we can ensure that $K_1$ and $K_2$ glue together to give a framed link $K\subseteq S\times[0,1]$. This new link is well defined since we assumed that the points of $K_1\cap(e_1\times[0,1])$ and $K_2\cap(e_2\times[0,1])$ lie at different elevations. This link represents a skein $[K]\in\mathcal{S}^A(S)$ called the skein obtained by \emph{gluing} $[K_1]\in\mathcal{S}^A(S_1)$ and $[K_2]\in\mathcal{S}^A(S_2)$.

\begin{definition}
If the skein $[K]\in\mathcal{S}^A(S)$ is obtained by gluing $[K_1]\in\mathcal{S}^A(S_1)$ and $[K_2]\in\mathcal{S}^A(S_2)$, then the states $s:\partial K\rightarrow\{+,-\}$, $s_1:\partial K_1\rightarrow\{+,-\}$, and~$s_2:\partial K_2\rightarrow\{+,-\}$ are said to be \emph{compatible} if $s_1$ and $s_2$ coincide on $\partial K_1\cap(e_1\times[0,1])=\partial K_2\cap(e_2\times[0,1])$ for the identification given by the gluing and if $s$ coincides with the restrictions of $s_1$ and $s_2$.
\end{definition}

\subsection{Quantum traces}

In~\cite{BonahonWong}, Bonahon and Wong constructed a natural map from the skein algebra into the Chekhov-Fock algebra. In this section, we will review some of the details of this construction. These results will play a crucial role in our construction of $\widehat{\mathbb{I}}^q$.

\begin{theorem}[\cite{BonahonWong},~Theorem~11]
\label{thm:BWmain}
For $A=\omega^{-2}$, there is a unique family of algebra homomorphisms
\[
\Tr_T^\omega:\mathcal{S}_{\mathrm{s}}^A(S)\rightarrow\widehat{\mathcal{Z}}_T^\omega,
\]
defined for each decorated surface $S$ and each ideal triangulation $T$ of~$S$, such that 
\begin{enumerate}
\item If the surface $S$ is obtained by gluing $S_1$ and $S_2$, if the ideal triangulation $T$ is obtained by gluing the ideal triangulations $T_1$ of~$S_1$ and~$T_2$ of~$S_2$, and if the skein $[K]\in\mathcal{S}^A(S)$ is obtained by gluing $[K_1]\in\mathcal{S}^A(S_1)$ and $[K_2]\in\mathcal{S}^A(S_2)$, then 
\[
\Tr_T^\omega([K,s])=\sum_{\text{compatible }s_1,s_2}\Tr_{T_1}^\omega([K_1,s_1])\otimes\Tr_{T_2}^\omega([K_2,s_2])
\]
where the sum is over all states $s_1$ and $s_2$ that are compatible with $s$ and with each other. Similarly, if the surface $S$, the ideal triangulation $T$, and the skein $[K]$ are obtained by gluing $S_1$, $T_1$, and~$[K_1]$, respectively, to themselves, then 
\[
\Tr_T^\omega([K,s])=\sum_{\text{compatible }s_1}\Tr_{T_1}^\omega([K_1,s_1]).
\]

\item\label{part:trianglecase} If $S$ is a triangle and $K$ projects to the following single arc embedded in~$S$ with vertical framing at every point, 
\[
\xy /l1.25pc/:
{\xypolygon3"A"{~:{(-3,0):}}},
(-0.1,-1)*{}="j1";
(2.1,-1)*{}="i1";
(2.75,-1)*{\sigma_1};
(-0.75,-1)*{\sigma_2};
"j1";"i1" **\crv{(0.5,-0.5) & (1.5,-0.5)};
\endxy
\]
then we have 
\[
\Tr_T^\omega([K,s])=
\begin{cases}
0 & \mbox{if } \sigma_1=-, \sigma_2=+ \\
[Z_1^{\sigma_1}Z_2^{\sigma_2}] & \mbox{otherwise}
\end{cases}
\]
where $Z_1$ and $Z_2$ are the generators of the triangle algebra associated to the sides of $S$ that intersect the arc with states $\sigma_1$ and $\sigma_2$, respectively.

\item If $S$ is a triangle and $K$ projects to the following single arc embedded in~$S$ with vertical framing at every point, 
\[
\xy /l1.25pc/:
{\xypolygon3"A"{~:{(-3,0):}}},
{"A2"\ar@{->}"A1"},
(1.8,-1.5)*{}="j1";
(2.7,0)*{}="i1";
(2.4,-1.5)*{\sigma_1};
(3.3,0)*{\sigma_2};
"j1";"i1" **\crv{(0.5,-0.5) & (1.5,0.5)};
\endxy
\]
then we have 
\[
\Tr_T^\omega([K,s])=
\begin{cases}
0 & \mbox{if } \sigma_1=\sigma_2 \\
-\omega^{-5} & \mbox{if } \sigma_1=+, \sigma_2=- \\
\omega^{-1} & \mbox{if } \sigma_1=-, \sigma_2=+.
\end{cases}
\]
\end{enumerate}
\end{theorem}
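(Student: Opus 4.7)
The plan is to establish the theorem in the standard state-sum fashion: first prove uniqueness by reducing every stated skein to triangle pieces, then construct $\Tr_T^\omega$ by the same recipe as a definition, and finally verify that the recipe is consistent.

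For uniqueness, begin with a stated skein $[K,s]$ and isotope $K$ into general position with respect to $T$, so that its projection meets the interior of each triangle in a disjoint union of embedded arcs and isolated crossings. Iterated application of axiom~(1), one interior edge of $T$ at a time, then expresses $\Tr_T^\omega([K,s])$ as the sum over all compatible state assignments on the interior edges of a product of quantum traces of the triangle pieces. Hence the map is determined by its values on stated arcs in a single triangle. Any such configuration can be reduced by the skein relation (so that only crossingless arcs remain) and every crossingless arc in a triangle is isotopic either to the arc of axiom~(2), the corner arc of axiom~(3), or the trivial retractable arc; for the latter the skein relation forces $\Tr_t^\omega$ to give a scalar that is pinned by axioms~(2) and (3). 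This determines $\Tr_T^\omega$ uniquely.

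For existence, use the same recipe as the definition: fix a general position representative of $K$, choose an orientation of every edge of $T$ so that the elevation order of $K$ along each interior edge matches the orientation, evaluate each triangle piece as a Weyl-ordered monomial according to axioms~(2) and (3), and sum over compatible states on interior edges. One then checks independence of all choices by a short list of local moves:
\begin{enumerate}
\item invariance under Reidemeister~II and~III and under sliding an arc past a triangle vertex, each a direct computation in the triangle algebras using $Z_iZ_j=\omega^{2\varepsilon_{ij}}Z_jZ_i$;
\item invariance under swapping the elevation order of two boundary points on the same interior edge, which follows by summing over the two intermediate states and using the explicit monomial formulas of axioms~(2)--(3) — this is where $A=\omega^{-2}$ enters;
\item invariance under sliding a crossing or an arc across an interior edge of $T$, which is the nontrivial compatibility between the commutation in the Chekhov--Fock algebra and the gluing sum.
\end{enumerate}
Once invariance is established, the skein relation $[K_1]=A^{-1}[K_0]+A[K_\infty]$ reduces to a single computation inside a disk containing one crossing, again forcing the identification $A=\omega^{-2}$. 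Multiplicativity on the superposition product follows because the elevation order on every interior edge of $T$ after stacking $K_1$ over $K_2$ factors as ``lower half from $K_2$, upper half from $K_1$'', so the compatible state sum factors through the tensor product. Finally, although individual triangle quantum traces for left-turning arcs may involve square roots $Z_i$ with half-integer exponents, the state-sum construction always pairs matching boundary points on interior edges, so all half-integer contributions combine into integer powers living in the subalgebra $\widehat{\mathcal{Z}}_T^\omega$ identified in Bonahon--Wong.

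The main obstacle is step~(iii) above: showing that pushing an arc across an interior edge of $T$ preserves the state-sum. This amounts to a matching between the $q$-commutation coefficients forced by $\varepsilon_{ij}$ across a shared edge of two triangles and the phase discrepancies produced when one replaces the axiom~(2) arc by its image after the slide. This is the calculation that pins down the precise power $A=\omega^{-2}$ and that justifies why the triangle building blocks of axioms~(2) and (3) (rather than any competing normalization) give a globally consistent map. The remaining verifications, including multiplicativity and landing in $\widehat{\mathcal{Z}}_T^\omega$, are then comparatively formal once this key compatibility is in place.
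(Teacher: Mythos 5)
This theorem is quoted from Bonahon and Wong (\cite{BonahonWong}, Theorem~11); the paper does not prove it, but it does review the construction in the ``Quantum traces'' subsection. The decisive ingredients the paper singles out --- the \emph{split ideal triangulation} $\widehat{T}$, the notion of \emph{good position} with respect to $\widehat{T}$, the biangle trace $\Tr_B^\omega$ of Definition~\ref{def:computebiangle}, and the state sum over triangles \emph{and} biangles --- are entirely absent from your sketch, and that absence is not cosmetic.

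Your existence argument hinges on the claim that one can ``choose an orientation of every edge of $T$ so that the elevation order of $K$ along each interior edge matches the orientation.'' That is not true in general. After isotoping a link into general position with respect to $T$ itself, each triangle piece consists of constant-elevation crossingless arcs, so the elevation order of the endpoints of those arcs on the boundary of a triangle is forced: both endpoints of a given arc lie at the same elevation. This rigidly links the elevation orderings on the three sides of each triangle, and the orderings coming from the two triangles adjacent to an interior edge need not agree with a common orientation of that edge. Bonahon and Wong introduce the biangle between the two parallel copies of each edge precisely to break this rigidity: strands may cross inside a biangle and be reordered there (this is exactly where the Kauffman skein relation gets used), so good position can always be achieved with respect to $\widehat{T}$ even when it cannot with respect to $T$. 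Your step~(iii), which you yourself call ``the main obstacle,'' is precisely the point where this machinery is required, and you do not supply an argument for it --- you only identify the difficulty. The existence half of the proposal is therefore a plan, not a proof, and it is a plan built on a false reduction.

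Two smaller remarks. First, the uniqueness sketch is plausible in outline, but the claim that ``every crossingless arc in a triangle is isotopic either to the arc of axiom (2), the corner arc of axiom (3), or the trivial retractable arc'' glosses over arcs returning to the same side and closed contractible components, and in any case the reduction to the generating pictures really goes through the biangle decomposition rather than through axiom~(1) alone. Second, the concern about ``square roots $Z_i$ with half-integer exponents'' is misplaced: the triangle trace $[Z_1^{\sigma_1}Z_2^{\sigma_2}]$ already has integer exponents $\sigma_i\in\{\pm1\}$, since the passage from $\mathcal{X}_T^q$ to $\mathcal{Z}_T^\omega$ absorbs the square roots once and for all; the nontrivial assertion that the image lies in $\widehat{\mathcal{Z}}_T^\omega$ is about which Laurent polynomials in the $Z_i$ arise, not about fractional powers.
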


Recall that in Section~\ref{sec:TheClassicalCanonicalMap} we described how to construct for any oriented loop $\ell$ on~$S$ a matrix $\rho(\ell)$ whose entries are Laurent polynomials in the square roots $X_i^{1/2}$. If we replace each $X_i^{1/2}$ in this construction by the corresponding generator $Z_i$ of the Chekhov-Fock algebra~$\mathcal{Z}_T^1$, then we get a matrix whose entries are elements of~$\mathcal{Z}_T^1$, and we denote this matrix again by~$\rho(\ell)$. In particular, if $[K]$ is a connected skein with $\partial K=\varnothing$, then we can apply this construction to its projection and take the trace to get an invariant denoted~$\Tr_T^1([K])$.

\begin{theorem}[\cite{BonahonWong}, Theorem~1]
\label{thm:BWclassical}
When $\omega=1$ the map $\Tr_T^\omega:\mathcal{S}^A(S)\rightarrow\widehat{\mathcal{Z}}_T^\omega$ coincides with $\Tr_T^1:\mathcal{S}^1(S)\rightarrow\mathcal{Z}_T^1$.
\end{theorem}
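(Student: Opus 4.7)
The plan is to unwind the definition of $\Tr_T^\omega$ using the gluing axiom of Theorem~\ref{thm:BWmain}, reduce $\Tr_T^\omega([K])$ at $\omega=1$ to a sum over states of explicit monomials, and then recognize this sum as an ordinary matrix trace that matches the classical monodromy construction of Section~\ref{sec:TheClassicalCanonicalMap}.

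Let $[K]\in\mathcal{S}^1(S)$ be a connected closed skein and isotope its projection $\ell$ so that it crosses the edges of $T$ minimally, in cyclic order $i_1,\dots,i_s$, passing through triangles $t_1,\dots,t_s$ with $t_k$ entered through $i_k$ and exited through $i_{k+1}$ (indices taken mod $s$). Cutting $S$ along $i_1,\dots,i_s$ decomposes $[K]$ into arcs $\alpha_k\subset t_k$ with vertical framing. Iterating the gluing axiom yields
\[
\Tr_T^\omega([K])=\sum_{\sigma\in\{\pm\}^s}\prod_{k=1}^s\Tr_{T_{t_k}}^\omega([\alpha_k,s_k]),
\]
where the state $s_k$ assigns $\sigma_k$ to the endpoint of $\alpha_k$ on $i_k$ and $\sigma_{k+1}$ to the endpoint on $i_{k+1}$, the compatibility condition forcing the two arcs adjacent to a shared edge to carry the same sign there. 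By part~\ref{part:trianglecase} of Theorem~\ref{thm:BWmain}, at $\omega=1$ each triangle factor equals $Z_{t_k,i_k}^{\sigma_k}Z_{t_k,i_{k+1}}^{\sigma_{k+1}}$ when $(\sigma_k,\sigma_{k+1})\neq(-,+)$ and vanishes otherwise, since the Weyl ordering is trivial in the commutative limit. Collecting these values into a $2\times 2$ matrix $N_k$ indexed by entering and exiting states, the state sum becomes the ordinary matrix trace $\operatorname{tr}(N_1N_2\cdots N_s)$.

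It then remains to identify $N_1\cdots N_s$ with the classical matrix product $M_1\cdots M_s$ defining $\Tr_T^1([K])$. Under the identification $Z_i=X_i^{1/2}$ provided by the embedding $\mathcal{X}_T^1\hookrightarrow\mathcal{Z}_T^1$, the two triangle generators $Z_{t_{k-1},i_k}$ and $Z_{t_k,i_k}$ attached to a common edge $i_k$ combine to produce the power of $X_{i_k}^{1/2}$ appearing in the classical edge matrix $E_{i_k}=\mathrm{diag}(X_{i_k}^{1/2},X_{i_k}^{-1/2})$. A direct case analysis, separating left-turn from right-turn configurations of $\ell$ in each $t_k$, then shows that the nonzero pattern of $N_k$ is upper triangular in one case and lower triangular in the other, matching the corresponding $M_k$, and that the nonzero entries of $N_k$ and $M_k$ agree after this identification. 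Taking the matrix trace of the product yields $\Tr_T^1([K])$.

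The main obstacle is the combinatorial alignment between the triangle-indexed factorization on the quantum side and the edge-indexed factorization $M_k=E_{i_k}T_k$ (edge matrix times turn matrix) implicit in the classical construction: the edge variable $X_{i_k}$ must be correctly assembled from the two adjacent triangle-algebra generators, and the sign conventions for $\pm$ versus powers of $X_{i_k}^{1/2}$ must be tracked carefully against the ordering prescribed by part~\ref{part:trianglecase} of Theorem~\ref{thm:BWmain}. A secondary subtlety is the self-gluing case, in which an edge bounds the same triangle on both sides or the loop $\ell$ has a segment both of whose ends lie on a single edge; here the self-gluing form of the gluing axiom must be invoked, and the cyclic nature of $\operatorname{tr}(N_1\cdots N_s)$ becomes essential for matching the trace on the classical side.
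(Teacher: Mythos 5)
This statement is cited directly from~\cite{BonahonWong} (their Theorem~1) and the paper gives no proof of it, so there is no internal argument to compare against. Your reconstruction is, in outline, the standard one and closely mirrors Bonahon and Wong's original proof. The crucial move---interpreting the state sum as a matrix trace $\operatorname{tr}(N_1\cdots N_s)$ and then bridging the triangle-indexed quantum factorization with the edge-indexed classical one via the split $E_{i_k}=E_{i_k,t_{k-1}}E_{i_k,t_k}$ of the diagonal edge matrix, so that after a cyclic shift one recovers $N_k=E_{i_k,t_k}T_kE_{i_{k+1},t_k}$---is exactly right and is precisely where the work is. You also correctly observe that the cyclicity of the trace is what absorbs the dangling factor coming from the ``first'' edge when the loop is closed.

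That said, the two steps you defer are exactly the delicate ones, and the proof isn't complete without them. First, the ``direct case analysis'' must verify that the vanishing rule $\sigma_1=-$, $\sigma_2=+$ in part~\ref{part:trianglecase} of Theorem~\ref{thm:BWmain} sits in the $(-,+)$ slot or the $(+,-)$ slot of $N_k$ (in the entering/exiting indexing) exactly according to whether the arc is traversed as a left or a right turn, and that the surviving entries then reproduce $E_{i_k,t_k}T_kE_{i_{k+1},t_k}$ entry by entry; this depends on the orientation convention built into the picture of Theorem~\ref{thm:BWmain}. Second, when $t_k=t_{k'}$ for $k\neq k'$ the two arcs land in the same triangle-algebra tensor factor, which is why Bonahon and Wong insist on the elevation ordering; at $\omega=1$ the factors commute so the ordering is immaterial, but you should say so rather than silently treat the tensor slots as distinct. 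Finally, note that $\Tr_T^1$ is only defined in this paper on \emph{connected closed} skeins, so you should either restrict the claim to that case or explain that $\Tr_T^1$ is extended multiplicatively to $\mathcal{S}^1(S)$ before asserting equality of the two maps.
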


The map $\Tr_T^\omega$ provided by Theorem~\ref{thm:BWmain} depends on the choice of ideal triangulation $T$, but Bonahon and Wong show that it is well behaved with respect to changes of triangulation. More precisely, this map has image in the algebra $\widehat{\mathcal{Z}}_T^\omega$, and they prove the following result.

\begin{theorem}[\cite{BonahonWong}, Theorem~2]
\label{thm:BWchangetriangulation}
Let $T$ and $T'$ be two ideal triangulations of~$S$, and let $[K]\in\mathcal{S}^A(S)$. Then the coordinate change map 
\[
\Theta_{TT'}^\omega:\widehat{\mathcal{Z}}_{T'}^\omega\rightarrow\widehat{\mathcal{Z}}_T^\omega
\]
takes the Laurent polynomial $\Tr_{T'}^\omega([K])$ to the Laurent polynomial $\Tr_T^\omega([K])$.
\end{theorem}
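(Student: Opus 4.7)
The plan is to reduce to the case of a single flip. Any two ideal triangulations of $S$ with the same vertex set (the marked points and punctures) are connected by a finite sequence of flips (Whitehead moves on interior edges), and the maps $\Theta^\omega$ satisfy the cocycle relation $\Theta_{TT''}^\omega=\Theta_{TT'}^\omega\circ\Theta_{T'T''}^\omega$. Hence it suffices to verify the identity $\Theta_{TT'}^\omega\bigl(\Tr_{T'}^\omega([K,s])\bigr)=\Tr_T^\omega([K,s])$ in the case where $T'$ is obtained from $T$ by flipping a single interior edge $e$ to a new edge $e'$. Note that an unstated closed skein is automatically a stated skein with empty state, so the stated version is what we really need to prove.

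Next I would use the splitting/gluing property of Theorem~\ref{thm:BWmain} to localize the problem. The flip is supported inside the quadrilateral $Q\subseteq S$ formed by the two triangles of $T$ adjacent to $e$; outside $Q$, the triangulations $T$ and $T'$ agree. Cut $S$ along $\partial Q$ and apply the gluing formula to write
\[
\Tr_T^\omega([K,s])=\sum_{s_0,s_Q\text{ compatible}}\Tr^\omega_{T|_{S\setminus Q}}([K\cap(S\setminus Q),s_0])\otimes\Tr^\omega_{T|_{Q}}([K\cap Q,s_Q]),
\]
and similarly with $T$ replaced by $T'$. The first tensor factor is independent of the flip, while $\Theta_{TT'}^\omega$ acts as the identity on the generators $Z_i$ for $i\notin\{e,e'\}$. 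The theorem therefore reduces to the case where $S=Q$ is a quadrilateral and $[K,s]$ is a stated skein in $Q\times[0,1]$ with endpoints on its four external edges.

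To finish, I would verify the quadrilateral case by direct calculation. Using the Kauffman skein relation to resolve crossings, any skein in $Q\times[0,1]$ is a $\mathbb{Z}[\omega,\omega^{-1}]$-linear combination of disjoint unions of embedded arcs (with vertical framing) whose endpoints pair up points on the four external edges; after isotopy these fall into a short finite list of topological patterns, and closed components inside $Q$ can be pushed to the boundary and cut open. For each pattern and each state assignment, one computes $\Tr_T^\omega$ by cutting along the internal edge $e$, summing over the intermediate state on $e$, and applying the two elementary triangle evaluations from parts~2 and~3 of Theorem~\ref{thm:BWmain}; one does the same for $T'$ with the internal edge $e'$. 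One then checks that the explicit flip formula defining $\Theta_{TT'}^\omega$ on the square-root algebra $\widehat{\mathcal{Z}}_{T'}^\omega$ sends the $T'$-answer to the $T$-answer in each case.

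The main obstacle is precisely this case-by-case check inside the quadrilateral. One must enumerate the few topological arc patterns (together with the $+/-$ states on their endpoints), evaluate the resulting Weyl-ordered monomials in the triangle generators $Z_{ij}$, and compare them with the image under the explicit flip formula, which is an intricate expression in the square-root generators. The enlargement from $\widehat{\mathcal{X}}^q_T$ to $\widehat{\mathcal{Z}}^\omega_T$ is essential here, since intermediate quantum-trace values genuinely involve $Z_e$ rather than $X_e=Z_e^2$. The delicate point is to confirm that the state-sums over the internal edge conspire with the Weyl-ordering factors so that all half-integer exponents and sign ambiguities cancel and the identity holds on the nose in $\widehat{\mathcal{Z}}_T^\omega$.
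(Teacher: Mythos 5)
This statement is quoted in the paper from Bonahon--Wong (their Theorem~2) without any proof of its own; the paper treats it as an imported black box, so there is no ``paper proof'' to compare against. That said, your sketch does follow the general strategy of the original Bonahon--Wong argument: reduce to a single flip via connectivity of the flip graph and the cocycle relation, localize to the quadrilateral using the splitting property, and check the identity there directly.

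Two points in your localization step are, however, concretely wrong or incomplete. First, the claim that $\Theta_{TT'}^\omega$ acts as the identity on all generators $Z_i$ with $i\notin\{e,e'\}$ is false: the flip formula also acts nontrivially on $Z_a,Z_b,Z_c,Z_d$ for the four sides of the quadrilateral $Q$ (this is already visible classically, where the cluster mutation changes $X_a,\dots,X_d$ by factors of $1+X_e^{\pm1}$, not just $X_e\mapsto X_e^{-1}$). Since each of $Z_a,\dots,Z_d$ is built as a tensor of a generator from a triangle inside $Q$ and one from a triangle outside $Q$, the flip ``leaks'' across $\partial Q$; the reduction to the quadrilateral therefore requires an argument that the outside tensor factors cancel in the state sum, which you do not provide. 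Second, you assert that the resolved skeins in $Q\times[0,1]$ ``fall into a short finite list of topological patterns'' and that the case check is a routine computation, but this elides that a skein can have arbitrarily many parallel arcs and that the crossings at the internal edge reintroduce elevations you must track; the reduction to single arcs requires invoking multiplicativity of the quantum trace under stacking, which should be stated explicitly. These gaps are precisely what the Bonahon--Wong argument is built to handle, so your outline is a reasonable roadmap, but it is not yet a proof.
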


We will now review the main ideas involved in the construction of $\Tr_T^\omega$. In~\cite{BonahonWong}, Bonahon and Wong define a \emph{split ideal triangulation} $\widehat{T}$ to be the cell decomposition of a surface obtained from an ideal triangulation $T$ by replacing each edge $i$ by two parallel copies of it, separated by a biangle.

\begin{lemma}[\cite{BonahonWong}, Lemma~23]
Let $K$ be a framed link in $S\times[0,1]$, and let $\widehat{T}$ be a split ideal triangulation of~$S$. Then $K$ can be deformed by an isotopy so that it satisfies the following conditions.
\begin{enumerate}
\item For every edge $i$ of $\widehat{T}$, $K$ is transverse to $i\times[0,1]$.

\item For every triangle $t_j$ of $\widehat{T}$, the intersection $K\cap(t_j\times[0,1])$ consists of finitely many disjoint arcs, each of which is contained in a constant elevation surface and joins two distinct components of $\partial t_j\times[0,1]$.

\item For every triangle $t_j$ of $\widehat{T}$, the components of $K\cap(t_j\times[0,1])$ lie at different elevations, and their framings are vertical.
\end{enumerate}
\end{lemma}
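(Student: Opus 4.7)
The plan is to apply a sequence of ambient isotopies to $K$ in $S \times [0,1]$ (respecting the framing) that successively establish conditions (1), (2), and (3), using the biangles of $\widehat{T}$ as buffer regions into which any unwanted crossings, closed components, or elevation mismatches can be absorbed while the triangular faces retain the desired clean structure.

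First, a standard general-position argument produces a small perturbation making $K$ transverse to each vertical wall $i \times [0,1]$ for every edge $i$ of $\widehat{T}$, and simultaneously making the planar projection $\pi: K \to S$ an immersion whose only singularities are transverse double points. This immediately yields condition (1) and arranges that each $K \cap (i \times [0,1])$ is a finite set of points at pairwise distinct elevations.

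Next I would eliminate the remaining obstructions inside each triangular face $t_j$ of $\widehat{T}$. Push every double point of $\pi(K)$ that lies in a triangle across the adjacent wall into the neighboring biangle via a small horizontal isotopy; this is possible because every edge of the original triangulation $T$ is flanked by a biangle in $\widehat{T}$. Similarly, isotope any closed component of $K \cap (t_j \times [0,1])$ across a wall into a biangle, and remove any arc whose two endpoints lie on the same wall of $t_j$ by isotoping it across that wall, using the embedded half-disk in $t_j$ that the arc cobounds to guide the isotopy. After these moves, for each triangular face $t_j$, $K \cap (t_j \times [0,1])$ consists of arcs whose projections to $t_j$ are disjoint embedded arcs, each joining two distinct walls of $t_j$, establishing the combinatorial part of (2).

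Finally, to put each such arc at constant elevation with vertical framing, note that $t_j \times [0,1]$ is a $3$-ball in which the arcs have prescribed disjoint embedded projections, and the endpoint framings are vertical by Definition~\ref{def:framedlink}. A straight-line homotopy in the vertical direction interpolates each arc to a horizontal one at some constant elevation; distinct constant elevations are then assigned to distinct arcs by small vertical translations, preserving disjointness because the planar projections are already disjoint, which yields condition (3). The vertical framings at the two endpoints extend to a vertical framing over the entire horizontal arc by contractibility of the space of transverse framings along a horizontal arc. The principal obstacle is maintaining global consistency: the elevation and framing of a strand emerging from one triangle must be compatible with those prescribed by the neighboring triangle, and the matching conditions at the walls are not automatic. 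This is precisely where the biangles become indispensable, since no constant-elevation constraint is imposed on them and, inside a biangle, one has the vertical freedom to absorb any elevation discrepancy and to perform the half twist needed to reconcile the framings of the two adjacent triangles.
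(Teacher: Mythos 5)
The paper does not prove this lemma; it simply cites Lemma~23 of Bonahon and Wong, so there is no internal proof to compare against, and what follows is an assessment of your sketch on its own terms. Your overall plan is the right one: achieve transversality by general position, clean up the triangular faces by local moves, and treat the biangles as buffer regions in which mismatches can be absorbed. One caution about the middle step: pushing all double points of the projection out of the triangles yields a configuration in which arc projections within each triangle are pairwise disjoint, which is strictly stronger than what the lemma asserts. Condition~(2) only requires the three-dimensional arcs to be disjoint, each of constant elevation, and joining distinct walls; two constant-elevation arcs at different heights are perfectly free to cross in projection. Using crossing-removal as a device so that your half-disk argument cleanly eliminates arcs returning to the same wall is fine, but one should not read the lemma's conclusion as including disjointness of projections.

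The more serious issue is that your final paragraph, which is where the real content of the lemma lives, asserts rather than establishes the key claims, and the appeal to ``the half twist needed to reconcile the framings'' conflates two unrelated things. The half twist appearing after Definition~\ref{def:framedlink} is purely a drawing convention for re-depicting the projection after reversing the chosen orientation of a boundary edge; it does not correspond to any physical twist in the framing of the link in $S\times[0,1]$. The genuine framing bookkeeping is different: after isotoping each arc over a triangle to constant elevation, the framing carried along by the isotopy need not be vertical, so one must perform a further isotopy supported near the walls that rotates the framing back to vertical over the triangle while pushing the accumulated twist into the adjacent biangle, where condition~(3) imposes no constraint. Similarly, the assertion that elevation discrepancies at the two walls of a biangle can always be interpolated without disturbing the already-normalized triangles needs an actual argument (essentially, that any rel-boundary adjustment of the strands' endpoints on $\partial B\times[0,1]$ extends to an ambient isotopy of $B\times[0,1]$). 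A complete proof must carry out this bookkeeping rather than gesture at ``vertical freedom.''
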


\begin{definition}
When a link $K$ satisfies the conditions in this lemma, we say that it is in \emph{good position} with respect to the split ideal triangulation $\widehat{T}$.
\end{definition}

Suppose that $B$ is a biangle whose sides are oriented in the same direction and $K$ is a framed link in $B\times[0,1]$ whose projection to $B$ has no crossing when drawn according to our picture conventions. Let $s$ be a state for $[K]$. In~\cite{BonahonWong}, Bonahon and Wong associate a number $\Tr_B^\omega([K,s])$ to these data. To calculate this number, let $a_{\sigma_2}^{\sigma_1}$ be the number of components of the type illustrated on the left below. Let $b_{\sigma_2}^{\sigma_1}$ be the number of components of the type illustrated in the middle and $c_{\sigma_2}^{\sigma_1}$ the number of the type illustrated on the right. Let $d$ be the number of closed components.
\[
\xy 0;/r.50pc/: 
(5,10)*{B}; 
(0,12)*{}="B"; 
(10,12)*{}="C";
(0,-6)*{}="B1"; 
(10,-6)*{}="C1";
(0,3)*{}="X"; 
(10,3)*{}="Y";
{"B1"\ar@{->}"B"},
{"C1"\ar@{->}"C"},
"X";"Y" **\crv{(4,1) & (6,5)};
(-1.5,3)*{\sigma_1}; 
(11.5,3)*{\sigma_2}; 
\endxy
\qquad
\qquad
\xy 0;/r.50pc/: 
(5,10)*{B}; 
(0,12)*{}="B"; 
(10,12)*{}="C";
(0,-6)*{}="B1"; 
(10,-6)*{}="C1";
(0,6)*{}="X"; 
(0,0)*{}="Y";
{"B1"\ar@{->}"B"},
{"C1"\ar@{->}"C"},
"X";"Y" **\crv{(4,6) & (4,0)};
(-1.5,6)*{\sigma_1}; 
(-1.5,0)*{\sigma_2}; 
\endxy
\qquad
\qquad
\xy 0;/r.50pc/: 
(5,10)*{B}; 
(0,12)*{}="B"; 
(10,12)*{}="C";
(0,-6)*{}="B1"; 
(10,-6)*{}="C1";
(10,6)*{}="X"; 
(10,0)*{}="Y";
{"B1"\ar@{->}"B"},
{"C1"\ar@{->}"C"},
"X";"Y" **\crv{(6,6) & (6,0)};
(11.5,6)*{\sigma_1}; 
(11.5,0)*{\sigma_2}; 
\endxy
\]

\begin{definition}[\cite{BonahonWong}, Lemma~20]
\label{def:computebiangle}
For a stated skein $[K,s]\in\mathcal{S}_{\mathrm{s}}^A(B)$ with no crossing, let $a_{\sigma_2}^{\sigma_1}$, $b_{\sigma_2}^{\sigma_1}$, $c_{\sigma_2}^{\sigma_1}$, and $d$ be defined as above. If one of the numbers $a_-^+$, $a_+^-$, $b_+^+$, $b_-^-$, $c_+^+$, or~$c_-^-$ is nonzero, then $\Tr_B^\omega([K,s])\coloneqq0$. Otherwise,
\[
\Tr_B^\omega([K,s])\coloneqq(-1)^{b_-^++c_+^-}\omega^{-(5b_-^++b_+^-)}\omega^{5c_+^-+c_-^+}(-\omega^4-\omega^{-4})^d.
\]
\end{definition}

More generally, if $[K,s]\in\mathcal{S}_{\mathrm{s}}^A(B)$ is a stated skein represented by a link $K$ with the vertical framing whose projection to $B$ has crossings, then we first use the skein relations to write $[K]=\sum_{i=1}^N\omega^{2p_i}[K_i]$ where $N$ and $p_i$ are integers and each $K_i$ is a link with no crossings, equipped with the vertical framing. Then we define 
\[
\Tr_B^\omega([K,s])\coloneqq\sum_{i=1}^N\omega^{2p_i}\Tr_B^\omega([K_i,s]).
\]
In this way, we associate a number $\Tr_{B}([K,s])$ to any stated skein $[K,s]\in\mathcal{S}_{\mathrm{s}}^A(B)$.

Now suppose $K$ is a framed link in $S\times[0,1]$ which is in good position with respect to a split ideal triangulation $\widehat{T}$. For each triangle $t_j$ of $\widehat{T}$, we put $K_j=K\cap(t_j\times[0,1])$, and for each biangle $B_i$, we put $L_i=K\cap(B_i\times[0,1])$. Let $k_1,\dots,k_l$ be the components of $K_j$ in order of increasing elevation, and define 
\[
\Tr_{t_j}^\omega([K_j,s_j])\coloneqq\Tr_{t_j}^\omega([k_1,s_j])\Tr_{t_j}^\omega([k_2,s_j])\dots \Tr_{t_j}^\omega([k_l,s_j])
\]
where $\Tr_{t_j}^\omega([k_i,s_j])$ is defined as in part~\ref{part:trianglecase} of Theorem~\ref{thm:BWmain}. Then $\Tr_T^\omega([K,s])$ is defined as 
\[
\Tr_T^\omega([K,s])\coloneqq\sum_{\text{compatible }\sigma_j, \tau_i}\prod_{i=1}^n\Tr_{B_i}^\omega([L_i,\tau_i])\bigotimes_{j=1}^m\Tr_{t_j}^\omega([K_j,\sigma_j]).
\]
where the sum is over all states $\sigma_j:\partial K_j\rightarrow\{+,-\}$ and $\tau_i:\partial L_i\rightarrow\{+,-\}$ that are compatible with $s:\partial K\rightarrow\{+,-\}$ and with each other. It follows from~\cite{BonahonWong},~Proposition~26 that this expression $\Tr_T^\omega([K,s])$ depends only on the isotopy class of $K$ and the state~$s$.

\begin{lemma}
\label{lem:BWstar}
Let $*$ be the canonical involutive antiautomorphism of $\mathcal{Z}_T^\omega$. If $K\subseteq S\times[0,1]$ is a link with the vertical framing and no crossings, then $*\Tr_T^\omega([K])=\Tr_T^\omega([K])$.
\end{lemma}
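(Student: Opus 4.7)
The plan is to exploit the simple structure of crossing-free links under the quantum trace construction: biangle contributions become trivial, and the triangle contributions reduce to products of Weyl-ordered monomials whose ordering is inessential by isotopy invariance. The proof then proceeds in three steps.

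First I would place $K$ in good position with respect to a split ideal triangulation $\widehat{T}$ and isotope it further so that in each biangle $B_i$ the intersection $L_i = K \cap (B_i \times [0,1])$ consists only of arcs going straight across from one side to the other. Because $K$ has no projection crossings, any $b$- or $c$-type U-turn or closed component in a thin biangle can be pushed out into an adjacent triangle without introducing crossings. For such a configuration the counts $b$, $c$, and $d$ in Definition~\ref{def:computebiangle} all vanish and each factor $\Tr_{B_i}^\omega([L_i,\tau_i])$ equals $1$ if the state $\tau_i$ assigns matching signs to the two endpoints of every $a$-arc and $0$ otherwise. Letting $\mathcal{S}$ denote the set of such matching states, the state-sum formula for $\Tr_T^\omega([K])$ collapses to
\[
\Tr_T^\omega([K]) = \sum_{s \in \mathcal{S}} \bigotimes_{j=1}^m \Tr_{t_j}^\omega([K_j, s|_{t_j}]),
\]
and each triangle factor is an ordered product $\prod_i [Z_1^{\sigma_1^i} Z_2^{\sigma_2^i}]$ of Weyl-ordered monomials, taken from the lowest-elevation component to the highest.

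Next, I would use crossing-freeness once more to reverse the elevation order of the components inside each triangle: since their projections within the triangle are pairwise disjoint, one can isotope $K$ through vertically framed links to a representative $K'$ with this reversed ordering, and the isotopy preserves biangle arc types together with the matching condition defining $\mathcal{S}$. Isotopy invariance of the quantum trace then yields
\[
\Tr_T^\omega([K]) = \Tr_T^\omega([K']) = \sum_{s \in \mathcal{S}} \bigotimes_{j=1}^m \prod_{i=l_j}^{1} [Z_1^{\sigma_1^i} Z_2^{\sigma_2^i}],
\]
the reversed product running from the highest-elevation to the lowest-elevation component. Since each Weyl-ordered monomial is $*$-invariant and $*$ is an antiautomorphism that distributes over tensor products of elements drawn from commuting subalgebras, the reversed product inside each triangle equals $* \Tr_{t_j}^\omega([K_j, s|_{t_j}])$. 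Summing over $s \in \mathcal{S}$ and pulling $*$ outside produces $\Tr_T^\omega([K]) = * \Tr_T^\omega([K])$.

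The principal obstacle lies in the first step: verifying the topological claim that a crossing-free framed link can always be isotoped into good position with only $a$-type biangle arcs, and that the elevation orderings inside all triangles can then be simultaneously reversed by isotopy while remaining compatible along the biangle arcs. Once these geometric facts are secured, the remainder of the argument is essentially formal bookkeeping using the $*$-invariance of the Weyl ordering.
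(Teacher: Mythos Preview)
Your approach shares the paper's central idea---identifying the action of $*$ with elevation reversal---but takes an unnecessary detour that creates the very obstacle you flag at the end. You try to trivialize the biangle contributions by isotoping $K$ into a good position with only $a$-type arcs and no biangle crossings, so that each $\Tr_{B_i}^\omega([L_i,\tau_i]) \in \{0,1\}$. This is not obviously achievable: requiring the elevation orderings of arcs over adjacent triangles to agree across every shared biangle is a global consistency condition that you do not verify, and the paper's own later construction of a good position for a single closed curve (at the start of the parity discussion in Section~\ref{sec:MainResult}) explicitly produces one biangle whose diagram carries crossings.

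The paper sidesteps this entirely. It keeps an arbitrary good position and observes that $*$, extended factorwise to $\bigotimes_j \mathcal{T}_{t_j}^\omega$, acts on the biangle scalars $\Tr_{B_i}^\omega([L_i,\tau_i])$ simply by $\omega \mapsto \omega^{-1}$, while on each triangle factor $\Tr_{t_j}^\omega([K_j,\sigma_j])$ it reverses the product order of the individually $*$-fixed Weyl monomials $[Z_{j1}^{\sigma_1}Z_{j2}^{\sigma_2}]$. Both effects are exactly what the global elevation-reversal $(x,t)\mapsto(x,1-t)$ does to the state-sum, so $*\Tr_T^\omega([K]) = \Tr_T^\omega([K'])$ with $K'$ the reflected link. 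When $K$ has no projection crossings, $K'$ is isotopic to $K$ as a framed link, and isotopy invariance of $\Tr_T^\omega$ finishes the argument. Your Step~2 is essentially this elevation-reversal move; the missing insight is that the biangle factors need not be made trivial first, because $*$ already matches elevation reversal on them as well.
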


\begin{proof}
Each triangle algebra $\mathcal{T}_{t_j}^\omega$ has an involutive antiautomorphism $*$ defined on generators by the formulas $*Z_{ji}=Z_{ji}$ and $*\omega=\omega^{-1}$. Define an operation $*$ on $\bigotimes_{j=1}^m\mathcal{T}_{t_j}^\omega$ by 
\[
*(x_1\otimes\dots\otimes x_m)=(*x_1)\otimes\dots\otimes(*x_m).
\]
This operation is an antihomomorphism. Moreover, for any generator $Z_i$ of the Chekhov-Fock algebra we have $*Z_i=Z_i$, so this operation $*$ induces the familiar antiautomorphism on~$\mathcal{Z}_T^\omega$. Now $\Tr_T^\omega([K])$ is obtained by summing the products $\prod_{i=1}^n\Tr_{B_i}^\omega([L_i,\tau_i])\bigotimes_{j=1}^m\Tr_{t_j}^\omega([K_j,\sigma_j])$ where $\Tr_{t_j}^\omega([K_j,\sigma_j])$ is a product 
\[
\Tr_{t_j}^\omega([K_j,\sigma_j])=\Tr_{t_j}^\omega([k_1,\sigma_j])\dots\Tr_{t_j}^\omega([k_l,\sigma_j])
\]
of Weyl ordered expressions $\Tr_{t_j}^\omega([k_i,\sigma_j])=[Z_{j1}^{\sigma_1}Z_{j2}^{\sigma_2}]$. These last expressions are $*$-invariant, so the $*$-operator on $\mathcal{T}_{t_i}^\omega$ acts by reversing the order of the factors in the product $\Tr_{t_j}^\omega([K_j,\sigma_j])$. It acts on the product $\prod_{i=1}^n\Tr_{B_i}^\omega([L_i,\tau_i])$ by sending $\omega$ to $\omega^{-1}$. The quantum trace $\Tr_T^\omega([K])$ transforms in the same way when we apply the elevation-reversing map to $K$. In other words, we have $*\Tr_T^\omega([K])=\Tr_T^\omega([K'])$ where $K'$ is obtained from $K$ by applying the map
\[
S\times[0,1]\rightarrow S\times[0,1]
\]
given by 
\[
(x,t)\mapsto(x,1-t).
\]
If $K$ has no crossings, then these links $K$ and $K'$ are isotopic, so we have $\Tr_T^\omega([K'])=\Tr_T^\omega([K])$. This completes the proof.
\end{proof}

\section{The quantum canonical map}
\label{sec:TheQuantumCanonicalMap}

\subsection{Definition of the map}

Let $S$ be a punctured surface that admits an ideal triangulation $T$. In this section, we define an element $\mathbb{I}^\omega(\ell)\in\mathcal{Z}_T^\omega$ for any lamination $\ell\in\mathcal{A}_L(S,\mathbb{Z})$ and prove that our definition is independent of all choices. We begin by defining $\mathbb{I}^\omega(\ell)$ for certain special laminations $\ell$, and then we extend the definition to arbitrary laminations by multiplicativity.

\begin{definition}
\label{def:peripheral}
Let $\ell$ be a lamination consisting of a single peripheral loop on~$S$. For an ideal triangulation $T$ of~$S$, we define 
\[
\mathbb{I}_T^\omega(\ell)\coloneqq[Z_1^{\mu_1}\dots Z_n^{\mu_n}]
\]
where $\mu_i=2a_i$ is twice the coordinate of~$\ell$ associated to the edge~$i$.
\end{definition}

In this definition, we had to choose an ideal triangulation $T$ of~$S$ and a labeling of the edges so that $Z_i$ is the generator of the Chekhov-Fock algebra associated to the edge $i$. We now show that $\mathbb{I}_T^\omega(\ell)$ is well behaved under changes of the triangulation.

\begin{lemma}
\label{lem:peripheralhighest}
If $\ell$ is a lamination consisting of a single peripheral loop of weight~1 on~$S$, and $K$ is a link in $S\times[0,1]$ with constant elevation and the vertical framing that projects to $\ell$, then $\mathbb{I}_T^\omega(\ell)$ is the highest term of $\Tr_T^\omega([K])$.
\end{lemma}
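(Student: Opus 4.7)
The plan is to isolate the highest term $c\cdot Z_1^{\mu_1}\cdots Z_n^{\mu_n}$ of $\Tr_T^\omega([K])$ and apply Lemma~\ref{lem:Weyllimit}, which will force it to coincide with the Weyl ordering $[Z_1^{\mu_1}\cdots Z_n^{\mu_n}]=\mathbb{I}_T^\omega(\ell)$. Invoking that lemma requires three things: that the classical limit of the highest term is $Z_1^{\mu_1}\cdots Z_n^{\mu_n}$, that the term is $*$-invariant, and that $c\in\mathbb{Z}_{\geq 0}[\omega,\omega^{-1}]$.

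The first two points are immediate from results in the excerpt. By Theorem~\ref{thm:BWclassical}, the classical limit of $\Tr_T^\omega([K])$ equals $\Tr_T^1([K])=\Tr\rho(\ell)$, and the proof of Proposition~\ref{prop:monodromyperipheral} exhibits $\rho(\ell)$ as a lower-triangular matrix with diagonal entries $X_1^{a_1}\cdots X_n^{a_n}$ and $X_1^{-a_1}\cdots X_n^{-a_n}$; using $X_i=Z_i^2$, the trace is therefore $Z_1^{\mu_1}\cdots Z_n^{\mu_n}+Z_1^{-\mu_1}\cdots Z_n^{-\mu_n}$, whose highest term is $Z_1^{\mu_1}\cdots Z_n^{\mu_n}$. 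For $*$-invariance, $K$ lies at constant elevation with the vertical framing and has no crossings, so Lemma~\ref{lem:BWstar} gives $*\Tr_T^\omega([K])=\Tr_T^\omega([K])$; since $*$ sends each Laurent monomial $Z^\nu$ to a scalar multiple of itself (reversing the order of factors only modifies the $\omega$-prefactor), this $*$-invariance passes to the highest term.

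The main obstacle is the positivity hypothesis $c\in\mathbb{Z}_{\geq 0}[\omega,\omega^{-1}]$. I would verify it by unpacking the construction of $\Tr_T^\omega$ through a split ideal triangulation $\widehat T$. Put $K$ in good position; because $\ell$ is a simple peripheral loop, each biangle intersection is a single arc running across, with no U-turns and no closed components. Compatibility then forces both endpoints of an across arc to carry the same sign, so a compatible state assignment amounts to a choice of one sign per crossing of $\ell$ with an edge. Each triangle arc contributes a Weyl-ordered monomial $[Z^{\sigma_1}Z^{\sigma_2}]$ by part~(2) of Theorem~\ref{thm:BWmain}, and after embedding the tensor product into $\mathcal{Z}_T^\omega$ via $Z_i=Z_{ja}\otimes Z_{j'b}$, the exponent of $Z_i$ in the resulting monomial equals the sum of the signs at the crossings of $\ell$ with edge~$i$. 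This sum attains its maximum $\mu_i$ only when every such sign is $+$, so the highest exponent vector $(\mu_1,\ldots,\mu_n)$ is produced solely by the all-plus state. In that state every $b$-, $c$-, and $d$-count in Definition~\ref{def:computebiangle} vanishes, so each biangle factor is $1$, and the total contribution is a single product of Weyl-ordered triangle monomials. Rewriting it in the order $Z_1^{\mu_1}\cdots Z_n^{\mu_n}$ yields $\omega^m Z_1^{\mu_1}\cdots Z_n^{\mu_n}$ for some integer~$m$; hence $c=\omega^m\in\mathbb{Z}_{\geq 0}[\omega,\omega^{-1}]$, and Lemma~\ref{lem:Weyllimit} completes the proof.
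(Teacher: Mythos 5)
Your plan is the same as the paper's: identify the highest term as the all-plus contribution in Bonahon and Wong's state sum, argue that its coefficient lies in $\mathbb{Z}_{\geq0}[\omega,\omega^{-1}]$, establish $*$-invariance, pin down the classical limit via Proposition~\ref{prop:monodromyperipheral}, and close with Lemma~\ref{lem:Weyllimit}. The classical-limit and $*$-invariance steps are fine and match the paper. The problem is the positivity step, which the paper merely asserts and you try to justify by examining the state sum directly.

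Your justification contains a real gap. You claim that after putting $K$ in good position, ``each biangle intersection is a single arc running across, with no U-turns and no closed components,'' and then feed the all-plus state straight into Definition~\ref{def:computebiangle}. Two issues: (a) if $\ell$ meets an edge $e$ of $T$ in $k>1$ points (as happens, for instance, for a very relevant triangle), the corresponding biangle contains $k$ arcs, not one; and (b) more seriously, even though these arcs are pairwise disjoint in $S\times[0,1]$, good position forces them onto distinct constant elevations dictated by the adjacent triangles, and when one draws the projection using Bonahon and Wong's picture conventions, the ordering by elevation on one side of the biangle need not agree with the ordering on the other side, producing crossings. Definition~\ref{def:computebiangle} applies literally only to crossing-free diagrams; with crossings one must first resolve via the skein relations. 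You never address this. Ruling out such crossings for a peripheral loop is precisely what the edge-orientation argument of Lemma~\ref{lem:nocyclic} accomplishes, but that lemma appears later in the paper and cannot be silently assumed here. Alternatively, you could argue that every resolution introducing a U-turn is killed by the all-plus state (since $b_+^+$ or $c_+^+$ becomes nonzero), so only the unique no-U-turn resolution of each biangle survives, with coefficient an honest power of $\omega$ coming from the skein relation prefactors and Weyl reordering; this yields the same conclusion $c=\omega^m$ without invoking a crossing-free good position. As written, though, the step is too quick.
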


\begin{proof}
The highest term of $\Tr_T^\omega([K])$ arises in Bonahon and Wong's state sum as the term corresponding to the state that assigns $+$ to every point. This term is the unique highest term, and its coefficient lies in $\mathbb{Z}_{\geq0}[\omega,\omega^{-1}]$. By Lemma~\ref{lem:BWstar}, $\Tr_T^\omega([K])$ is $*$-invariant. When we apply the map $*$ to $\Tr_T^\omega([K])$, the highest term maps to another term with the same total degree, so this highest term must be mapped to itself. In other words, the highest term is $*$-invariant. By~Proposition~\ref{prop:monodromyperipheral}, the classical limit of the highest term is $Z_1^{\mu_1}\dots Z_n^{\mu_n}$, and therefore the highest term of the quantum trace $\Tr_T^\omega([K])$ is $[Z_1^{\mu_1}\dots Z_n^{\mu_n}]$ by Lemma~\ref{lem:Weyllimit}.
\end{proof}

\begin{lemma}
Let $\ell$ be a lamination consisting of a single peripheral loop on~$S$. If $T$ and $T'$ are ideal triangulations of $S$, then $\Theta_{TT'}^\omega\mathbb{I}_{T'}^\omega(\ell)=\mathbb{I}_T^\omega(\ell)$.
\end{lemma}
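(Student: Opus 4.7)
The plan is to combine Lemma~\ref{lem:peripheralhighest} with Bonahon--Wong's change-of-triangulation result (Theorem~\ref{thm:BWchangetriangulation}). Let $K\subset S\times[0,1]$ be a framed link that projects to $\ell$ with constant elevation and vertical framing; if $\ell$ has weight~$k$, take $K$ to be $k$ parallel copies of the underlying weight-one peripheral loop. The argument of Lemma~\ref{lem:peripheralhighest} extends directly (using the $k$-th power of the diagonal form in Proposition~\ref{prop:monodromyperipheral}) to show that $\mathbb{I}_T^\omega(\ell)$ is the highest term of $\Tr_T^\omega([K])$ and symmetrically that $\mathbb{I}_{T'}^\omega(\ell)$ is the highest term of $\Tr_{T'}^\omega([K])$. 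Theorem~\ref{thm:BWchangetriangulation} then supplies the key identity $\Theta_{TT'}^\omega\bigl(\Tr_{T'}^\omega([K])\bigr)=\Tr_T^\omega([K])$.

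The substantive step is to show that $\Theta_{TT'}^\omega$ carries the single monomial $\mathbb{I}_{T'}^\omega(\ell)$ to the single monomial $\mathbb{I}_T^\omega(\ell)$, rather than mixing it with the images of the lower-order tail. I would exploit the fact that $\mathbb{I}_{T'}^\omega(\ell)$ is a \emph{central} element of $\widehat{\mathcal{Z}}_{T'}^\omega$: writing $\ell$ as encircling a puncture $p$, the nonzero exponents $\mu_i$ are supported on the set $I_p$ of edges incident to $p$, and a short case-check over the two triangles meeting each edge $j$ establishes $\sum_{i\in I_p}\varepsilon_{ij}=0$. Hence the Weyl-ordered peripheral monomial commutes with every generator $Z_j$. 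Since $\Theta_{TT'}^\omega$ is an algebra isomorphism, $\Theta_{TT'}^\omega\bigl(\mathbb{I}_{T'}^\omega(\ell)\bigr)$ is central in $\widehat{\mathcal{Z}}_T^\omega$; the center of this algebra is generated by the analogous peripheral Casimirs, one per puncture, corresponding to the lattice kernel of the skew-symmetric exchange matrix. Taking the classical limit $\omega=1$ and invoking the classical Fock--Goncharov identity $\Phi_{TT'}^{1}\mathbb{I}_{T'}(\ell)=\mathbb{I}_T(\ell)$ identifies the puncture and pins down the exponent, yielding $\Theta_{TT'}^\omega\bigl(\mathbb{I}_{T'}^\omega(\ell)\bigr)=\omega^m\,\mathbb{I}_T^\omega(\ell)$ for some integer $m$.

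To show $m=0$ I would invoke $*$-invariance. The monomial $\mathbb{I}_{T'}^\omega(\ell)$ is $*$-invariant by Weyl ordering, and because $\Theta_{TT'}^\omega$ extends the quantum cluster-variable transition map $\Phi_{TT'}^q$, whose mutation formulas are symmetric under $q\leftrightarrow q^{-1}$ together with product reversal, it commutes with $*$; alternatively, this $*$-equivariance of $\Theta_{TT'}^\omega$ on peripheral Casimirs can be extracted directly from Lemma~\ref{lem:BWstar} applied to the quantum traces of (parallel) peripheral loops. Applying $*$ to $\Theta_{TT'}^\omega\mathbb{I}_{T'}^\omega(\ell)=\omega^m\mathbb{I}_T^\omega(\ell)$ then yields $\omega^m\mathbb{I}_T^\omega(\ell)=\omega^{-m}\mathbb{I}_T^\omega(\ell)$, so $m=0$.

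The main obstacle is identifying $\Theta_{TT'}^\omega\mathbb{I}_{T'}^\omega(\ell)$ as a single Laurent monomial in the $Z_j$'s rather than a more complicated central Laurent polynomial. This rests on two ingredients that deserve careful verification: the description of the center of $\widehat{\mathcal{Z}}_T^\omega$ as generated by the peripheral Casimirs (a consequence of the kernel of $\varepsilon$ being spanned by the peripheral vectors for a triangulated punctured surface), and the puncture-matching claim that $\Theta_{TT'}^\omega$ sends the Casimir at $p$ in $T'$ to a power of the Casimir at the \emph{same} puncture in $T$. The classical-limit identity handles the puncture-matching, but lifting the scalar from $\omega=1$ to general $\omega$ is where $*$-equivariance does the essential quantum work.
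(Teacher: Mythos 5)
Your approach is genuinely different from the paper's, and it has a real gap at the central step. The paper proves this lemma by tracking the state sum directly: by Lemma~\ref{lem:peripheralhighest}, $\mathbb{I}_T^\omega(\ell)$ is precisely the term of $\Tr_T^\omega([K])$ corresponding to the all-$+$ state, and the cited proof of Theorem~\ref{thm:BWchangetriangulation} in Bonahon--Wong shows that $\Theta_{TT'}^\omega$ carries the all-$+$ term of $\Tr_{T'}^\omega([K])$ to the all-$+$ term of $\Tr_T^\omega([K])$. This avoids any analysis of the structure of the image as a central element. Your plan replaces that with a centrality argument plus classical limit plus $*$-equivariance, and the weak point is exactly the one you flag yourself at the end.

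Specifically, the step ``Taking the classical limit $\omega=1$ and invoking $\Phi_{TT'}^1\mathbb{I}_{T'}(\ell)=\mathbb{I}_T(\ell)$ \dots yielding $\Theta_{TT'}^\omega\bigl(\mathbb{I}_{T'}^\omega(\ell)\bigr)=\omega^m\,\mathbb{I}_T^\omega(\ell)$'' does not follow from what precedes it. Knowing the image lies in the center of $\widehat{\mathcal{Z}}_T^\omega$ only tells you it is some rational function, with coefficients in $\mathbb{C}(\omega)$, of the peripheral Casimirs; for example, an element such as $\mathbb{I}_T^\omega(\ell)+(\omega-1)(\omega^{-1}-1)\,f$ with $f$ central is central, $*$-invariant (if $f$ is), and has the correct classical limit, but is not a monomial. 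Centrality plus classical limit does \emph{not} constrain the image to be a single Laurent monomial, so there is no ``scalar $m$'' to pin down until you have proved monomiality by some other route. Your $*$-equivariance step is therefore conditioned on an unproved premise, and the $*$-equivariance of $\Theta_{TT'}^\omega$ itself is asserted rather than established (the paper never states it).

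There are ways to close the gap, but they require genuinely new input beyond what you invoke. One route is the paper's: exploit the explicit state-sum description of $\Theta_{TT'}^\omega$ in Bonahon--Wong to see that the all-$+$ term maps to the all-$+$ term. Another route, staying closer to your centrality idea, is to use that $\mathbb{I}_{T'}^\omega(\ell)\,\mathbb{I}_{T'}^\omega(-\ell)=1$ and $\mathbb{I}_{T'}^\omega(\ell)+\mathbb{I}_{T'}^\omega(-\ell)=\Tr_{T'}^\omega([K])$ (the content of Lemma~\ref{lem:traceperipheral}), so that $\Theta_{TT'}^\omega(\mathbb{I}_{T'}^\omega(\ell))$ and $\Theta_{TT'}^\omega(\mathbb{I}_{T'}^\omega(-\ell))$ are two roots of the quadratic $x^2-\Tr_T^\omega([K])\,x+1$ over the commutative center, hence must equal $\{\mathbb{I}_T^\omega(\ell),\mathbb{I}_T^\omega(-\ell)\}$, with the classical limit then distinguishing them. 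A third route is to prove directly that the quantum mutation formula sends central (Casimir) monomials to central monomials, which amounts to the computation that the binomial factors in the mutation formula cancel on the kernel of $\varepsilon$. Any of these would complete your argument, but as written the proposal stops short, and the assertion that the image is a single monomial $\omega^m\mathbb{I}_T^\omega(\ell)$ is unjustified. A minor additional point: your reduction of weight $k>1$ to weight $1$ via $k$ parallel copies does not directly cover negative $k$ (allowed for peripheral curves); the paper sidesteps this by observing $\mathbb{I}^\omega(k\ell)=\mathbb{I}^\omega(\ell)^k$ for all $k\in\mathbb{Z}$ and that the coordinate change is an algebra homomorphism.
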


\begin{proof}
Suppose the loop $\ell$ has weight~1. Let $K$ be a link in $S\times[0,1]$ with constant elevation and the vertical framing that projects to $\ell$. By Lemma~\ref{lem:peripheralhighest}, the expression $\mathbb{I}_T^\omega(\ell)$ is the highest term of $\Tr_T^\omega([K])$, which arises in the state sum from the state that assigns $+$ to every point. Similarly, $\mathbb{I}_{T'}^\omega(\ell)$ is the term in $\Tr_{T'}^\omega([K])$ corresponding to the state that assigns $+$ to every point. By the proof  in~\cite{BonahonWong} of our Theorem~\ref{thm:BWchangetriangulation}, these two expressions are related by the coordinate change map.

This proves the lemma in the special case where $\ell$ consists of a single peripheral loop of weight~1. For such a lamination, we have $\mathbb{I}^\omega(k\ell)=\mathbb{I}^\omega(\ell)^k$. The lemma follows from this and the fact that the coordinate change map is an algebra homomorphism.
\end{proof}

\begin{definition}
\label{def:nonperipheral}
Let $\ell$ be a lamination consisting of a single nonperipheral curve of weight~1 on~$S$. Let $T$ be any ideal triangulation of~$S$. Then 
\[
\mathbb{I}_T^\omega(\ell)\coloneqq\Tr_T^\omega([K])
\]
where $K$ is a framed link in $S\times[0,1]$ with constant elevation and vertical framing that projects to $\ell$.
\end{definition}

By Theorem~\ref{thm:BWchangetriangulation}, we have $\Theta_{TT'}^\omega\mathbb{I}_{T'}^\omega(\ell)=\mathbb{I}_T^\omega(\ell)$ for ideal triangulations $T$ and $T'$. We therefore write $\mathbb{I}^\omega=\mathbb{I}_T^\omega$ when there is no possibility of confusion.

To define the quantum canonical map for a lamination consisting of a curve of weight $k>1$, we need the following special polynomials.

\begin{definition}
The \emph{Chebyshev polynomials} $F_k(t)\in\mathbb{Z}[t]$ are defined by $F_0(t)=2$, $F_1(t)=t$, and the recursion relation 
\[
F_{k+1}(t)=F_k(t)\cdot t-F_{k-1}(t)
\]
for $k\geq1$.
\end{definition}

The polynomials $T_k(t)$ defined by $T_0(t) = 1$, $T_1(t) = t$, $T_{k+1}(t) = 2t T_k(t) - T_{k-1}(t)$ are what are usually called \emph{Chebyshev polynomials of the first kind}. They are related to our Chebyshev polynomials $F_k(t)$ by $F_k(t) = 2 T_k(t/2)$. For us, the most important fact about the polynomials $F_k(t)$ is the following, which relates them to traces of $2\times 2$-matrices:

\begin{proposition}
\label{prop:Chebyshevtrace}
For any $M\in SL_2(\mathbb{C})$ and any nonnegative integer $k$, we have 
\[
\Tr(M^k)=F_k(\Tr(M)).
\]
\end{proposition}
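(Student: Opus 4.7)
The plan is to proceed by induction on $k$. The base cases are immediate: $\Tr(M^0)=\Tr(I)=2=F_0(\Tr(M))$, and $\Tr(M^1)=\Tr(M)=F_1(\Tr(M))$.

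For the inductive step, the key observation is that the recursion defining $F_k$ exactly mirrors a recursion satisfied by $\Tr(M^k)$ when $M\in SL_2(\mathbb{C})$. Specifically, I would invoke the Cayley-Hamilton theorem: since $M\in SL_2(\mathbb{C})$, its characteristic polynomial is $\lambda^2-\Tr(M)\lambda+1$, so $M$ satisfies
\[
M^2 - \Tr(M)\cdot M + I = 0.
\]
Multiplying this identity on the right by $M^{k-1}$ gives $M^{k+1}=\Tr(M)\cdot M^k - M^{k-1}$, and taking traces yields
\[
\Tr(M^{k+1}) = \Tr(M)\cdot\Tr(M^k) - \Tr(M^{k-1}).
\]
Applying the inductive hypothesis to $\Tr(M^k)$ and $\Tr(M^{k-1})$ and comparing with the defining recursion $F_{k+1}(t)=t\cdot F_k(t)-F_{k-1}(t)$ immediately gives $\Tr(M^{k+1})=F_{k+1}(\Tr(M))$, completing the induction.

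There is no real obstacle here; the only slightly delicate point is making sure to use that $\det M=1$ (so the constant term of the characteristic polynomial is $+1$, not an arbitrary determinant), which is precisely what produces the $-1$ in the Cayley-Hamilton relation and hence the $-F_{k-1}$ in the recursion. If $M$ were an arbitrary element of $GL_2(\mathbb{C})$, one would get a modified recursion involving $\det(M)$.
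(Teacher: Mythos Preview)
Your proof is correct and follows the same inductive structure as the paper's. The only minor difference is in how the key recursion $\Tr(M^{k+1})=\Tr(M)\Tr(M^k)-\Tr(M^{k-1})$ is derived: you use Cayley--Hamilton directly, while the paper instead invokes the trace identity $\Tr(A)\Tr(B)=\Tr(AB)+\Tr(AB^{-1})$ for $B\in SL_2(\mathbb{C})$ with $A=M^k$ and $B=M$. Both routes yield the identical recursion, and your Cayley--Hamilton argument is arguably the more direct of the two.
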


\begin{proof}
The statement is clear for $k=0$,~$1$. Assume inductively that it holds for all $k\leq N$. A straightforward calculation shows that for any $2\times2$ matrix $A$ and any~$B\in SL_2(\mathbb{C})$, we have the identity 
\[
\Tr(A)\Tr(B)=\Tr(AB)+\Tr(AB^{-1}).
\]
Applying this identity with $A=M^N$ and $B=M$, we find $\Tr(M^N)\Tr(M)=\Tr(M^{N+1})+\Tr(M^{N-1})$, or equivalently, 
\begin{align*}
\Tr(M^{N+1}) &= \Tr(M^N)\Tr(M)-\Tr(M^{N-1}) \\
&= F_N(\Tr(M))\Tr(M)-F_{N-1}(\Tr(M)) \\
&= F_{N+1}(\Tr(M)).
\end{align*}
The proposition follows by induction.
\end{proof}

\begin{corollary}
\label{cor:classicalChebyshev}
Let $\ell$ be a lamination consisting of a single nonperipheral curve of weight~1, and let $k$ be any positive integer. Then 
\[
\mathbb{I}(k\ell)=F_k(\mathbb{I}(\ell)).
\]
\end{corollary}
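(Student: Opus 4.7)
The plan is to reduce the identity directly to Proposition~\ref{prop:Chebyshevtrace} by tracing through Definition~\ref{def:classicalcanonical}. By part~(1) of that definition, for a fixed point $m\in\mathcal{X}_{PGL_2,S}(\mathbb{C})$ we have
\[
\mathbb{I}(k\ell)(m) = \Tr\bigl(\rho(\ell)^k\bigr), \qquad \mathbb{I}(\ell)(m) = \Tr\bigl(\rho(\ell)\bigr),
\]
where $\rho(\ell)$ is the matrix built in Section~\ref{sec:TheClassicalCanonicalMap} from the sequence of turning matrices $M_1,\dots,M_s$.

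First I would verify the prerequisite that $\rho(\ell)\in SL_2(\mathbb{C})$, which is what is needed to apply Proposition~\ref{prop:Chebyshevtrace}. This is a direct computation: both the left-turn and right-turn matrices
\[
\begin{pmatrix} X_{i_k}^{1/2} & X_{i_k}^{1/2} \\ 0 & X_{i_k}^{-1/2} \end{pmatrix}, \qquad \begin{pmatrix} X_{i_k}^{1/2} & 0 \\ X_{i_k}^{-1/2} & X_{i_k}^{-1/2} \end{pmatrix}
\]
have determinant $1$, so $\rho(\ell) = M_1\cdots M_s$ is an element of $SL_2$ over the ring of Laurent polynomials in the $X_i^{\pm1/2}$.

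Then Proposition~\ref{prop:Chebyshevtrace} gives $\Tr(\rho(\ell)^k) = F_k(\Tr(\rho(\ell)))$, which by the two identifications above is exactly $\mathbb{I}(k\ell) = F_k(\mathbb{I}(\ell))$.

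There is no real obstacle here; the content has been packaged into Proposition~\ref{prop:Chebyshevtrace}. The only thing to be careful about is that the statement makes sense at the level of rational functions on $\mathcal{X}_{PGL_2,S}$: since the identity of Proposition~\ref{prop:Chebyshevtrace} holds pointwise for every $m$ in the open locus where the coordinates $X_i$ are defined and nonzero, it holds as an identity of elements of $\mathbb{Q}(\mathcal{X}_{PGL_2,S})$, which is the ambient space of the map $\mathbb{I}$.
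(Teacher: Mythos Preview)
Your proof is correct and follows essentially the same route as the paper: identify $\mathbb{I}(\ell)$ and $\mathbb{I}(k\ell)$ with $\Tr(\rho(\ell))$ and $\Tr(\rho(\ell)^k)$ via Definition~\ref{def:classicalcanonical}, then invoke Proposition~\ref{prop:Chebyshevtrace}. Your added remarks that the turning matrices have determinant~$1$ and that the pointwise identity yields an equality in $\mathbb{Q}(\mathcal{X}_{PGL_2,S})$ simply make explicit what the paper leaves implicit.
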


\begin{proof}
Recall that $\mathbb{I}(\ell)$ is equal to the trace of a matrix $M\in SL_2(\mathbb{C})$, while $\mathbb{I}(k\ell)$ is equal to the trace of $M^k$. By~Proposition~\ref{prop:Chebyshevtrace}, we have $\Tr(M^k)=F_k(\Tr(M))$.
\end{proof}

This result motivates the definition of the quantum canonical map for loops with weight an integer $k>1$.

\begin{definition}
\label{def:multicurve}
Let $\ell$ be a lamination consisting of a single nonperipheral curve of weight~1, and let $k$ be a positive integer. Then we define 
\[
\mathbb{I}^\omega(k\ell)\coloneqq F_k(\mathbb{I}^\omega(\ell))
\]
where $F_k$ is the $k$th Chebyshev polynomial.
\end{definition}

A priori, one might expect the correct definition of $\mathbb{I}^\omega$ to involve some deformation of the Chebyshev polynomial $F_k$ with coefficients in $\mathbb{Z}[\omega,\omega^{-1}]$. Definition~\ref{def:multicurve} appears however to be an almost unique definition of $\mathbb{I}^\omega$ having certain desirable properties. A more thorough investigation of the possible uniqueness of our construction is a problem for future research.

We now wish to extend the definition of the quantum canonical map to all laminations by multiplicativity. To show that the result is a well defined element of the Chekhov-Fock algebra, we will need the following two lemmas.

\begin{lemma}
\label{lem:peripheralcentral}
If $\ell$ is a lamination consisting of a single peripheral curve, then $\mathbb{I}^\omega(\ell)$ is a central element of the Chekhov-Fock algebra.
\end{lemma}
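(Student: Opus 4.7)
The plan is to reduce the claimed centrality to a short combinatorial identity involving the exchange matrix $\varepsilon$, and then verify that identity by a local computation triangle by triangle.

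\textbf{Step 1: coordinates and reduction.} First I would identify the coordinates of a peripheral loop. If $\ell$ has weight $w$ and runs around a puncture $p$, deforming $\ell$ to a small circle around $p$ shows that $\mu_i(\ell) = w$ for each edge $i$ of $T$ incident to $p$ and $\mu_i(\ell) = 0$ for every other edge (the value $1$ is minimal because a nontrivial loop around $p$ must cross each incident edge an odd number of times). By Definition~\ref{def:peripheral}, $\mathbb{I}_T^\omega(\ell)$ is a scalar multiple of the plain product $Z_1^{\mu_1}\cdots Z_n^{\mu_n}$, so I only need to show this product commutes with every generator $Z_j$. Using $Z_j Z_i = \omega^{2\varepsilon_{ji}} Z_i Z_j$, this reduces to
\[
\sum_i \mu_i\,\varepsilon_{ji} \;=\; w\sum_{i \ni p} \varepsilon_{ji} \;=\; 0 \qquad \text{for every edge } j \in I,
\]
i.e., to the identity $\sum_{i \ni p}\varepsilon_{ji}=0$. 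This will a fortiori give centrality with respect to the Chekhov-Fock generators $X_j = Z_j^2$.

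\textbf{Step 2: local verification.} Next I would prove this identity by decomposing $\varepsilon_{ji} = \sum_t \varepsilon_{ji}^t$ as a sum over the triangles $t$ of $T$. Only triangles containing $j$ contribute, and within such a triangle the only nonzero entries come from the two other sides of $t$: $+1$ for the side of $t$ immediately following $j$ counterclockwise around $t$, and $-1$ for the side immediately preceding it. These two neighbouring sides meet $j$ at its two endpoints, which I will call the ``target'' and ``source'' of $j$ in the orientation induced from $t$. Hence
\[
\sum_{i \ni p} \varepsilon_{ji}^t \;=\; [p = \text{target of }j\text{ in }t] \;-\; [p = \text{source of }j\text{ in }t].
\]
Summing over the two triangles incident to $j$, the induced orientations on $j$ are opposite, so the $+1$ contribution from one triangle cancels the $-1$ contribution from the other for every puncture $p$, establishing the identity.

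The hard part will be the bookkeeping in degenerate gluings: when both sides of $j$ lie in the same triangle $t$ (as in a self-folded triangle), the two ``neighbouring triangles'' of $j$ coincide, but the edge $j$ still appears in $t$ as two sides carrying opposite induced orientations, so the pairing argument will still produce the cancellation. Since $S$ has no marked points, boundary edges do not occur and no further cases arise.
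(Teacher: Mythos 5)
Your strategy matches the paper's: reduce the claimed centrality to a vanishing statement for the exchange matrix summed over the edges incident to $p$, and verify that statement by a local sign--cancellation argument. The triangle-by-triangle decomposition in your Step~2 is a detailed version of what the paper expresses as the left/right pairing of the incident edges around $j$. However, two details are wrong, and as written they leave a real gap.

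In Step~1, the coordinate formula $\mu_i(\ell) = w$ for every edge $i$ incident to $p$ is incorrect when $i$ has both endpoints at $p$: a small loop around $p$ crosses such an edge twice, so $\mu_i(\ell) = 2w$, and the parenthetical claim that the crossing number is always odd fails for exactly these edges. The paper handles this by listing the incident edges $j_1,\dots,j_r$ \emph{with multiplicity}, one entry per endpoint at $p$. The distinction matters: the identity you reduce to, $\sum_{i\ni p}\varepsilon_{ji}=0$ summed over \emph{distinct} incident edges, is not the right identity, whereas the multiplicity-weighted version $\sum_s \varepsilon_{j j_s}=0$ is.

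In Step~2, the local formula $\sum_{i\ni p}\varepsilon_{ji}^t = [p=\text{target}]-[p=\text{source}]$ is also not correct as stated, because the two flanking sides of $t$ share a third vertex $w$. If $p = w$ and $p$ is also an endpoint of $j$, then both flanking sides are incident to $p$, so $[e_2\ni p]-[e_3\ni p]=1-1=0$, while your right-hand side is $\pm1$. This is precisely the situation in which one of the sides $e_2,e_3$ adjacent to $j$ in $t$ is a loop at $p$, a degenerate case you did not cover; the one you do discuss (both copies of $j$ lying in the same triangle) is a different one. The correct local statement is
\[
\sum_i \bigl(\text{number of endpoints of } i \text{ at } p\bigr)\,\varepsilon_{ji}^t = [p=\text{target}]-[p=\text{source}],
\]
which has exactly the form you wrote once the edges are counted with multiplicity: the two $[p=w]$ contributions are then genuine arithmetic $+1$ and $-1$ and cancel. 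Combined with the corrected coordinate $\mu_i = w\cdot(\text{number of endpoints of }i\text{ at }p)$, the pairing over the two triangles containing $j$ closes the proof. So the architecture is right, and the two errors compensate once everything is counted with multiplicity; but as written the chain of reasoning does not go through for triangulations with loop edges at $p$.
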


\begin{proof}
Suppose the curve of $\ell$ has weight~1. Let $p$ denote the puncture to which this curve retracts, and let $j_1,\dots,j_r$ be the edges of the triangulation that end at $p$. (An edge appears twice on this list if both of its endpoints are the puncture $p$.) We claim that for any edge $i$ of the ideal triangulation, we have 
\[
\sum_{s=1}^r\varepsilon_{ij_s}=0.
\]
Indeed, if $i$ is an edge of the ideal triangulation with the property that $\varepsilon_{ij_s}=0$ for all $s\in\{1,\dots,r\}$, then this equation clearly holds. On the other hand, if we have $\varepsilon_{ij_s}\neq0$ for some $j_s$, then one can check that the number of edges $j_s$ that lie immediately to the left of~$i$ is the same as the number of edges that lie immediately to the right. It follows that the claim is true in this case also. Hence 
\begin{align*}
Z_i(Z_{j_1}\dots Z_{j_r}) &= \omega^{2\sum_s\varepsilon_{ij_s}}(Z_{j_1}\dots Z_{j_r})Z_i \\
&= (Z_{j_1}\dots Z_{j_r})Z_i,
\end{align*}
so the generator $Z_i$ commutes with $\mathbb{I}^\omega(\ell)=[Z_{j_1}\dots Z_{j_r}]$. Hence it commutes with $\mathbb{I}^\omega(k\ell)=\mathbb{I}^\omega(\ell)^k$.
\end{proof}

\begin{lemma}
\label{lem:disjointcommute}
If $\ell$ and $\ell'$ are disjoint simple closed curves on $S$, and if $k$ and $k'$ are integer weights for $\ell$ and $\ell'$, then $\mathbb{I}^\omega(k\ell)$ and $\mathbb{I}^\omega(k'\ell')$ commute.
\end{lemma}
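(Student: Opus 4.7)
The plan is to split into cases according to whether the curves are peripheral, and then to reduce the general (non-peripheral) case to the commutativity of two single-weight skein elements in $\mathcal{S}^A(S)$.

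First, if either $\ell$ or $\ell'$ is peripheral, the statement is immediate. Indeed, for a peripheral curve the quantum canonical map is defined by a Weyl-ordered monomial in the $Z_i$, and for integer weight $k$ we have $\mathbb{I}^\omega(k\ell)=\mathbb{I}^\omega(\ell)^k$ by the centrality already established in the proof of Lemma~\ref{lem:peripheralcentral}. Since $\mathbb{I}^\omega(\ell)$ is central in $\mathcal{Z}_T^\omega$ by that lemma, and the Chekhov–Fock algebra $\mathcal{X}_T^q$ embeds in $\mathcal{Z}_T^\omega$, any power $\mathbb{I}^\omega(k\ell)$ is central too and therefore commutes with $\mathbb{I}^\omega(k'\ell')$.

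So assume both $\ell$ and $\ell'$ are non-peripheral. By Definition~\ref{def:multicurve} we have $\mathbb{I}^\omega(k\ell)=F_k(\mathbb{I}^\omega(\ell))$ and $\mathbb{I}^\omega(k'\ell')=F_{k'}(\mathbb{I}^\omega(\ell'))$, where $F_k$, $F_{k'}$ are Chebyshev polynomials with integer coefficients. Hence it suffices to show that the weight-one elements $\mathbb{I}^\omega(\ell)$ and $\mathbb{I}^\omega(\ell')$ commute in $\mathcal{Z}_T^\omega$: any two polynomials in a pair of commuting elements commute.

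To see this, choose framed links $K,K'\subseteq S\times[0,1]$ with constant elevation and vertical framing projecting respectively to $\ell$ and $\ell'$, so that $\mathbb{I}^\omega(\ell)=\Tr_T^\omega([K])$ and $\mathbb{I}^\omega(\ell')=\Tr_T^\omega([K'])$ by Definition~\ref{def:nonperipheral}. Because $\ell$ and $\ell'$ are disjoint simple closed curves in $S$, we may place $K$ at any elevation in $(0,1)$ and $K'$ at any other elevation without creating any intersection: the product $[K][K']$ stacks $K$ below $K'$, while $[K'][K]$ stacks them in the opposite order, and the two configurations are related by an ambient isotopy of $S\times[0,1]$ that simply slides one constant-elevation link vertically past the other (the projections never meet, so the isotopy can be taken level-preserving in $S$). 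Consequently $[K][K']=[K'][K]$ in the framed link algebra $\mathcal{K}(S)$, and this equality descends to $\mathcal{S}^A(S)$. Applying the algebra homomorphism $\Tr_T^\omega$ from Theorem~\ref{thm:BWmain} yields
\[
\mathbb{I}^\omega(\ell)\,\mathbb{I}^\omega(\ell')=\Tr_T^\omega([K][K'])=\Tr_T^\omega([K'][K])=\mathbb{I}^\omega(\ell')\,\mathbb{I}^\omega(\ell),
\]
which combined with the Chebyshev observation completes the proof.

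The only conceptual point requiring care, and the one I view as the heart of the argument, is the isotopy step: one must verify that for constant-elevation links whose projections are disjoint simple closed curves (in particular, missing the boundary and the marked points), one can genuinely swap their elevations without violating any of the three conditions of Definition~\ref{def:framedlink}. This is clear here since $\partial K=\partial K'=\varnothing$, so none of the boundary-elevation conditions are in play, and the vertical framing is preserved throughout the isotopy.
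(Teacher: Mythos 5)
Your proof is correct and follows essentially the same route as the paper: reduce to the nonperipheral case via Lemma~\ref{lem:peripheralcentral}, reduce to weight one via the Chebyshev polynomial definition, and then use the isotopy $[K][K']=[K'][K]$ in $\mathcal{S}^A(S)$ together with the fact that $\Tr_T^\omega$ is an algebra homomorphism. You merely spell out the isotopy and boundary-condition check in more detail than the paper, which states the skein-level commutativity without elaboration.
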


\begin{proof}
By Lemma~\ref{lem:peripheralcentral}, it suffices to check this when $\ell$ and $\ell'$ are both nonperipheral. In this case, we have $\mathbb{I}^\omega(\ell)=\Tr_T^\omega([K])$ and $\mathbb{I}^\omega(\ell')=\Tr_T^\omega([K'])$ where $K$ and $K'$ are framed links with constant elevation and vertical framing that project to $\ell$ and $\ell'$, respectively. Since the quantum trace map is an algebra homomorphism, it follows that 
\begin{align*}
\mathbb{I}^\omega(\ell)\mathbb{I}^\omega(\ell') &= \Tr_T^\omega([K])\Tr_T^\omega([K']) \\
&= \Tr_T^\omega([K][K']) \\
&= \Tr_T^\omega([K'][K]) \\
&= \Tr_T^\omega([K'])\Tr_T^\omega([K]) \\
&= \mathbb{I}^\omega(\ell')\mathbb{I}^\omega(\ell).
\end{align*}
By applying Definition~\ref{def:multicurve}, we can express $\mathbb{I}^\omega(k\ell)$ and $\mathbb{I}^\omega(k'\ell')$ as polynomials in $\mathbb{I}^\omega(\ell)$ and $\mathbb{I}^\omega(\ell')$, respectively. It follows that these expressions commute.
\end{proof}

The following is the main definition of the present paper:

\begin{definition}
\label{def:canonicalmap}
Let $\ell$ be any lamination in $\mathcal{A}_L(S,\mathbb{Z})$ and write $\ell=\sum_ik_i\ell_i$ where $\ell_i$ are the curves of~$\ell$ with each homotopy class of curves appearing at most once in the sum and $k_i\in\mathbb{Z}$. Then
\[
\mathbb{I}^\omega(\ell)\coloneqq\prod_i\mathbb{I}^\omega(k_i\ell_i).
\]
In particular, the value of $\mathbb{I}^\omega$ on the empty lamination is the identity.
\end{definition}

By Lemma~\ref{lem:disjointcommute}, this product is independent of the order of the factors. Let us examine the classical limit of this expression.

\begin{proposition}
\label{prop:classicallimit}
For any lamination $\ell$, the classical limit of $\mathbb{I}^\omega(\ell)$ is identified with $\mathbb{I}(\ell)$ where the generator $Z_i$ is identified with $X_i^{1/2}$.
\end{proposition}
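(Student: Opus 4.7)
The plan is to prove the proposition by cases, following the structure of the definition of $\mathbb{I}^\omega$. Since both $\mathbb{I}$ and $\mathbb{I}^\omega$ are defined multiplicatively over connected components (Definition~\ref{def:classicalcanonical}(3) and Definition~\ref{def:canonicalmap}), and taking the classical limit $\omega \to 1$ is a ring homomorphism, it suffices to verify the statement when $\ell$ consists of a single curve with some integer weight. So I would reduce immediately to three cases: a peripheral curve of weight $k$, a nonperipheral curve of weight $1$, and a nonperipheral curve of weight $k > 1$.

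For the peripheral case, $\mathbb{I}^\omega(\ell) = [Z_1^{\mu_1}\cdots Z_n^{\mu_n}]$ by Definition~\ref{def:peripheral}. The Weyl-ordering prefactor $\omega^{-\sum_{i<j}\varepsilon_{ij}\mu_i\mu_j}$ becomes $1$ at $\omega = 1$, so the classical limit is $Z_1^{\mu_1}\cdots Z_n^{\mu_n}$. Under the identification $Z_i = X_i^{1/2}$ and $\mu_i = 2a_i$, this is $X_1^{a_1}\cdots X_n^{a_n}$. By Proposition~\ref{prop:monodromyperipheral}, this monomial is exactly the distinguished eigenvalue $\lambda_\ell$ (for a weight-$1$ peripheral curve), which by part~(2) of Definition~\ref{def:classicalcanonical} equals $\mathbb{I}(\ell)$. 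For a peripheral curve of weight $k$, the coordinates scale by $k$, so both $\mathbb{I}^\omega(\ell)$ and $\lambda_\ell^k$ acquire a $k$th-power exponent, giving agreement.

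For a nonperipheral curve $\ell$ of weight $1$, Definition~\ref{def:nonperipheral} gives $\mathbb{I}^\omega(\ell) = \Tr_T^\omega([K])$ where $K$ is the constant-elevation, vertically-framed lift of $\ell$. Theorem~\ref{thm:BWclassical} identifies the classical limit of $\Tr_T^\omega([K])$ with the classical trace $\Tr(\rho(\ell))$ obtained from the matrices $M_k$ of Section~\ref{sec:TheClassicalCanonicalMap}, under the identification $Z_i \mapsto X_i^{1/2}$. This is precisely $\mathbb{I}(\ell)$ by part~(1) of Definition~\ref{def:classicalcanonical}. For a nonperipheral curve of weight $k > 1$, Definition~\ref{def:multicurve} gives $\mathbb{I}^\omega(k\ell) = F_k(\mathbb{I}^\omega(\ell))$. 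Since $F_k$ has integer coefficients, the classical limit of $F_k(\mathbb{I}^\omega(\ell))$ equals $F_k$ applied to the classical limit of $\mathbb{I}^\omega(\ell)$, which is $F_k(\mathbb{I}(\ell))$. By Corollary~\ref{cor:classicalChebyshev}, this equals $\mathbb{I}(k\ell)$.

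Finally, assembling the cases: for an arbitrary $\ell = \sum_i k_i \ell_i$, Definition~\ref{def:canonicalmap} gives $\mathbb{I}^\omega(\ell) = \prod_i \mathbb{I}^\omega(k_i\ell_i)$, and the classical limit of a finite product is the product of the classical limits, matching $\mathbb{I}(\ell) = \prod_i \mathbb{I}(k_i \ell_i)$ from Definition~\ref{def:classicalcanonical}(3). I do not anticipate any real obstacle here; the proof is essentially an unpacking of the definitions together with the already-established Theorem~\ref{thm:BWclassical}. The only mildly subtle point is confirming in the peripheral case that the Weyl-ordered monomial reproduces the distinguished eigenvalue rather than, say, the trace of the monodromy, which is handled cleanly by Proposition~\ref{prop:monodromyperipheral}.
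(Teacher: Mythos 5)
Your proof is correct and follows essentially the same route as the paper: reduce to single-curve cases via multiplicativity, then handle the peripheral case through the Weyl-ordered monomial and Proposition~\ref{prop:monodromyperipheral}, the weight-$1$ nonperipheral case through Theorem~\ref{thm:BWclassical}, and the weight-$k$ case through the Chebyshev recursion and Corollary~\ref{cor:classicalChebyshev}. The only cosmetic difference is that the paper additionally cites Theorem~\ref{thm:classicalproperties} in the peripheral case to pin down that the highest-term monomial $X_1^{a_1}\cdots X_n^{a_n}$ (rather than its inverse) is the distinguished eigenvalue; you assert this directly, which is fine but would benefit from that citation.
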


\begin{proof}
If $\ell$ is a lamination consisting of a single peripheral curve on~$S$, then we have $\mathbb{I}^1(\ell)=\mathbb{I}(\ell)$ by Definitions~\ref{def:classicalcanonical} and~\ref{def:peripheral}, Proposition~\ref{prop:monodromyperipheral}, and Theorem~\ref{thm:classicalproperties}. If $\ell$ is a lamination consisting of a single nonperipheral curve of weight~1, then $\mathbb{I}^1(\ell)=\Tr_T^1([K])=\mathbb{I}(\ell)$ by Theorem~\ref{thm:BWclassical}. Moreover, if $k$ is any positive integer, then we have 
\begin{align*}
\mathbb{I}^1(k\ell) &= F_k(\mathbb{I}^1(\ell)) \\
&= F_k(\mathbb{I}(\ell)) \\
&= \mathbb{I}(k\ell)
\end{align*}
by the previous remarks and Corollary~\ref{cor:classicalChebyshev}. The proposition now follows from~Definition~\ref{def:canonicalmap}.
\end{proof}

Recall that $*$ denotes the canonical involutive antiautomorphism of $\mathcal{Z}_T^\omega$. 

\begin{proposition}
\label{prop:starinvariance}
For any lamination~$\ell\in\mathcal{A}_L(S,\mathbb{Z})$, we have $*\mathbb{I}^\omega(\ell)=\mathbb{I}^\omega(\ell)$.
\end{proposition}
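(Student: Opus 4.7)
The plan is to reduce the claim to the case of a single simple closed curve and then handle the peripheral and nonperipheral subcases separately, with Lemma~\ref{lem:BWstar} supplying the only substantive input.

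First I would use Definition~\ref{def:canonicalmap} to write $\ell = \sum_i k_i \ell_i$ with the $\ell_i$ pairwise non-homotopic simple closed curves, so that $\mathbb{I}^\omega(\ell) = \prod_i \mathbb{I}^\omega(k_i \ell_i)$. Since $*$ is an antihomomorphism, $*\mathbb{I}^\omega(\ell)$ equals this same product with the order of factors reversed; by Lemma~\ref{lem:disjointcommute} the factors commute, so the reversal is immaterial. Therefore it suffices to establish $*\mathbb{I}^\omega(k\ell_0) = \mathbb{I}^\omega(k\ell_0)$ for a single simple closed curve $\ell_0$.

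For a peripheral $\ell_0$, Definition~\ref{def:peripheral} gives $\mathbb{I}^\omega(k\ell_0) = [Z_1^{\mu_1}\cdots Z_n^{\mu_n}]$, and the identity $Z_n^{\mu_n}\cdots Z_1^{\mu_1} = \omega^{-2\sum_{i<j}\varepsilon_{ij}\mu_i\mu_j} Z_1^{\mu_1}\cdots Z_n^{\mu_n}$ already derived in the proof of Lemma~\ref{lem:Weyllimit} shows at once that any Weyl-ordered monomial is $*$-invariant. For a nonperipheral $\ell_0$ of weight $1$, Definition~\ref{def:nonperipheral} expresses $\mathbb{I}^\omega(\ell_0) = \Tr_T^\omega([K])$ where $K$ has constant elevation and vertical framing; because $\ell_0$ is a simple closed curve, this link projects to $S$ without crossings, so Lemma~\ref{lem:BWstar} applies verbatim. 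For a nonperipheral $\ell_0$ with weight $k > 1$, Definition~\ref{def:multicurve} gives $\mathbb{I}^\omega(k\ell_0) = F_k(\mathbb{I}^\omega(\ell_0))$ with $F_k \in \mathbb{Z}[t]$. Because $*$ fixes integers and satisfies $*(x^j) = (*x)^j$ for any single element $x$, the weight-$1$ invariance propagates: $*F_k(\mathbb{I}^\omega(\ell_0)) = F_k(*\mathbb{I}^\omega(\ell_0)) = F_k(\mathbb{I}^\omega(\ell_0))$.

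I do not anticipate any genuine obstacle; the argument is a short case analysis driven by the defining decomposition of $\mathbb{I}^\omega$. The key observation making the nonperipheral case work is that the simple-curve hypothesis on each $\ell_0$ forces the projection of its lift to be crossing-free, which is exactly the hypothesis of Lemma~\ref{lem:BWstar}.
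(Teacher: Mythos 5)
Your proof is correct and follows essentially the same route as the paper: reduce to a single curve, invoke Lemma~\ref{lem:BWstar} for a nonperipheral weight-$1$ curve, propagate through the integral Chebyshev polynomial for higher weights, and observe directly that Weyl-ordered monomials are $*$-invariant for the peripheral case. You make explicit two steps the paper leaves implicit (the commuting-factors argument via Lemma~\ref{lem:disjointcommute} and the computation behind ``by inspection'' in the peripheral case), but the underlying argument is identical.
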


\begin{proof}
By~Lemma~\ref{lem:BWstar}, the element $\mathbb{I}^\omega(\ell)$ is $*$-invariant whenever $\ell$ is a lamination consisting of a single nonperipheral curve of weight~1. For any positive integer~$k$, we can express $\mathbb{I}^\omega(k\ell)$ as a polynomial in $\mathbb{I}^\omega(\ell)$ with integral coefficients, so this element $\mathbb{I}^\omega(k\ell)$ is $*$-invariant as well. Finally, for a lamination $\ell$ consisting of a single peripheral curve, the element $\mathbb{I}^\omega(\ell)$ is $*$-invariant by inspection.
\end{proof}

If $F$ is an element of $\mathcal{Z}_T^\omega$ having a highest term, then we will write $[F]^H$ for this highest term. Recall that by Theorem~\ref{thm:classicalproperties}, the Laurent polynomial $\mathbb{I}^1(\ell)$ has highest term of the form $Z_1^{\mu_1}\dots Z_n^{\mu_n}$ where $\mu_i=2a_i$ is twice the coordinate associated to the edge~$i$. We now examine the highest term of the quantum analog $\mathbb{I}^\omega(\ell)$.

\begin{proposition}
\label{prop:highest}
Let $\ell\in\mathcal{A}_L(S,\mathbb{Z})$ be a lamination on~$S$. Then the unique highest term of $\mathbb{I}^\omega(\ell)$ is 
\[
[\mathbb{I}^\omega(\ell)]^H=[Z_1^{\mu_1}\dots Z_n^{\mu_n}]
\]
where $\mu_i=2a_i$ is twice the coordinate of~$\ell$ associated to the edge~$i$.
\end{proposition}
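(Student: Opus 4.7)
The plan is to reduce to the case of a single curve using the multiplicative Definition~\ref{def:canonicalmap}, treat peripheral and nonperipheral curves separately, and then reassemble. Writing $\mathbb{I}^\omega(\ell) = \prod_i \mathbb{I}^\omega(k_i \ell_i)$ as a product of pairwise commuting factors (Lemma~\ref{lem:disjointcommute}), and using additivity of the coordinates ($\mu_j(\ell) = \sum_i k_i \mu_j(\ell_i)$ where $\mu_j = 2a_j$), it suffices to establish the formula for each single-curve factor and then verify that the product of Weyl-ordered highest terms is itself a Weyl-ordered monomial of the expected multidegree. Since multiplication in $\mathcal{Z}_T^\omega$ is additive on multidegrees, this product has a unique highest-multidegree monomial of the form $\omega^m Z_1^{\mu_1(\ell)}\cdots Z_n^{\mu_n(\ell)}$ with no cross-interference from lower-order terms. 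A final application of Lemma~\ref{lem:Weyllimit}, fed with $*$-invariance from Proposition~\ref{prop:starinvariance}, the classical limit supplied by Proposition~\ref{prop:classicallimit} together with Theorem~\ref{thm:classicalproperties}(1), and positivity of the coefficient inherited from the single-curve cases, identifies this monomial as $[Z_1^{\mu_1(\ell)}\cdots Z_n^{\mu_n(\ell)}]$.

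For a single peripheral curve the claim is immediate from Definition~\ref{def:peripheral}. For a single nonperipheral curve $\ell$ of weight~$1$, I would repeat the argument of Lemma~\ref{lem:peripheralhighest} essentially verbatim. In the Bonahon--Wong state sum expressing $\Tr_T^\omega([K])$ over a split ideal triangulation, each state contributes a monomial whose multidegree at edge~$e$ equals the signed count of state values on $\partial K\cap(e\times[0,1])$; thus the multidegree is maximized uniquely by the all-$+$ state, at which it equals $\mu_e = 2a_e$. The same bookkeeping that Lemma~\ref{lem:peripheralhighest} invokes shows that the resulting coefficient lies in $\mathbb{Z}_{\geq 0}[\omega,\omega^{-1}]$. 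By Theorem~\ref{thm:BWclassical} together with Theorem~\ref{thm:classicalproperties}(1), the classical limit of this highest term is $Z_1^{\mu_1}\cdots Z_n^{\mu_n}$; combined with $*$-invariance (Lemma~\ref{lem:BWstar}), Lemma~\ref{lem:Weyllimit} forces $[\mathbb{I}^\omega(\ell)]^H = [Z_1^{\mu_1}\cdots Z_n^{\mu_n}]$.

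For a single nonperipheral curve $\ell$ of weight $k>1$, Definition~\ref{def:multicurve} gives $\mathbb{I}^\omega(k\ell) = F_k(\mathbb{I}^\omega(\ell))$. Since $F_k$ is monic of degree~$k$, the lower Chebyshev terms contribute strictly smaller maximum multidegrees, so the highest term of $\mathbb{I}^\omega(k\ell)$ coincides with that of $\mathbb{I}^\omega(\ell)^k$, namely the $k$-th power of $[Z_1^{\mu_1}\cdots Z_n^{\mu_n}]$. This power takes the form $\omega^{m'}Z_1^{k\mu_1}\cdots Z_n^{k\mu_n}$ for an integer $m'$ determined by the $\varepsilon_{ij}$, and a further application of Lemma~\ref{lem:Weyllimit} identifies it with $[Z_1^{k\mu_1}\cdots Z_n^{k\mu_n}]$. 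I expect the main obstacle to be the positivity and uniqueness of the leading state-sum contribution in the nonperipheral weight-$1$ step: one must confirm that the sign factor $(-1)^{b_-^+ + c_+^-}$ and the factor $(-\omega^4-\omega^{-4})^d$ in Definition~\ref{def:computebiangle} do not introduce negative coefficients at the all-$+$ state, the former because $b_-^+ = c_+^- = 0$ when every state value is $+$, and the latter because a link in good position representing a simple closed curve contributes no closed components inside a biangle. Once this bookkeeping is in place, the remaining steps are routine applications of Lemma~\ref{lem:Weyllimit}.
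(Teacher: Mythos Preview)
Your proposal is correct and takes essentially the same approach as the paper: reduce to single curves to verify uniqueness and positivity of the highest-term coefficient, then apply Lemma~\ref{lem:Weyllimit} using $*$-invariance (Proposition~\ref{prop:starinvariance}) and the known classical limit. Your case-by-case treatment (peripheral, nonperipheral weight~$1$, and the Chebyshev step for weight $k>1$) is more explicit than the paper's proof, which compresses all of this into the single assertion that each single-curve factor has a unique highest term with coefficient in $\mathbb{Z}_{\geq0}[\omega,\omega^{-1}]$ and then applies Lemma~\ref{lem:Weyllimit} once at the end.
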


\begin{proof}
If $\ell$ is a lamination consisting of a single curve, then the expression $\mathbb{I}^\omega(\ell)$ has a unique highest term, and its coefficient lies in $\mathbb{Z}_{\geq0}[\omega,\omega^{-1}]$. Thus the same is true of $\mathbb{I}^\omega(\ell)$ for any lamination. By Proposition~\ref{prop:starinvariance}, $\mathbb{I}^\omega(\ell)$ is $*$-invariant. When we apply the map $*$ to $\mathbb{I}^\omega(\ell)$, the highest term maps to another term with the same total degree, so this highest term must be mapped to itself. In other words, the highest term is $*$-invariant. By Lemma~\ref{lem:Weyllimit}, this term equals the Weyl ordering of its classical limit, namely $[Z_1^{\mu_1}\dots Z_n^{\mu_n}]$.
\end{proof}

This result has a consequence that will be useful later.

\begin{lemma}
\label{lem:highestdetermines}
If $\ell$ and~$\ell'$ are distinct laminations, then the two monomials obtained by taking the classical limits of the highest terms of $\mathbb{I}^\omega(\ell)$ and $\mathbb{I}^\omega(\ell')$ are distinct.
\end{lemma}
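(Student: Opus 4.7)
The plan is to read the highest terms of $\mathbb{I}^\omega(\ell)$ and $\mathbb{I}^\omega(\ell')$ using Proposition~\ref{prop:highest}, pass to classical limits, and then invoke the bijectivity of the coordinate map established in Proposition~2.4 (Theorem~12.1 of~\cite{IHES}) to conclude.

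More concretely, I would first apply Proposition~\ref{prop:highest} to each of $\ell$ and $\ell'$ to identify their highest terms as $[Z_1^{\mu_1}\dots Z_n^{\mu_n}]$ and $[Z_1^{\mu'_1}\dots Z_n^{\mu'_n}]$, where $\mu_i = 2a_i(\ell)$ and $\mu'_i = 2a_i(\ell')$ are twice the $a$-coordinates. Next I would observe that taking the classical limit ($\omega = 1$) collapses the Weyl ordering prefactor to $1$ and makes the variables $Z_i$ commute, so these limits become the ordinary monomials $Z_1^{\mu_1}\cdots Z_n^{\mu_n}$ and $Z_1^{\mu'_1}\cdots Z_n^{\mu'_n}$ respectively.

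These two monomials are equal if and only if $\mu_i = \mu'_i$ for all $i \in I$, which is the same as $a_i(\ell) = a_i(\ell')$ for all $i$. By Proposition~2.4, the tuple $(a_i)_{i \in I}$ provides a bijection $\mathcal{A}_L(S,\mathbb{Q}) \to \mathbb{Q}^{|I|}$, so equality of all coordinates forces $\ell = \ell'$. Since we assumed $\ell \neq \ell'$, at least one coordinate must differ, yielding distinct monomials.

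There is no real obstacle here: the work has all been done in Proposition~\ref{prop:highest} (which pins down the highest term in Weyl-ordered form) and in the classical coordinate bijection. The only mild care required is to note that the exponents come in the normalization $\mu_i = 2a_i$ common to both laminations, so the factor of $2$ cancels and distinctness of $a$-coordinates is equivalent to distinctness of the exponents $\mu_i$.
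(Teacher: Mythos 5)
Your proposal is correct and takes essentially the same route as the paper's own proof: apply Proposition~\ref{prop:highest} to read off the highest terms as Weyl-ordered monomials with exponents $\mu_i=2a_i$, pass to the classical limit, and conclude from the injectivity of the coordinate map on laminations. You simply spell out the coordinate-bijection step (Proposition~2.4) a bit more explicitly than the paper, which just says ``two laminations are equal if and only if their coordinates coincide.''
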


\begin{proof}
Proposition~\ref{prop:highest} says that the classical limits of the highest terms of these expressions are $Z_1^{\mu_1}\dots Z_n^{\mu_n}$ and $Z_1^{\mu_1'}\dots Z_n^{\mu_n'}$ where $\mu_i=2a_i$, $\mu_i'=2a_i'$, and $a_i$ and $a_i'$ denote the coordinates of~$\ell$ and~$\ell'$, respectively. This proves our claim since two laminations are equal if and only if their coordinates coincide.
\end{proof}

In~\cite{BWrep}, Bonahon and Wong studied their quantum trace construction in the special case where $A$ is a root of unity and proved the following result.

\begin{theorem}[\cite{BWrep}, Theorem~21]
\label{thm:BWFrobenius}
Let $S$ be a punctured surface with no boundary, and let $T$ be an ideal triangulation of $S$. If $A^4$ is a primitive $N$th root of unity, $A=\omega^{-2}$, $\kappa=A^{N^2}$, and~$\iota=\omega^{N^2}$, then for any skein $[K]\in\mathcal{S}^\kappa(S)$ with the vertical framing and projecting to a simple closed curve, we have 
\[
\Tr_T^\omega(F_N([K]))(Z_1,\dots,Z_n)=\Tr_T^\iota([K])(Z_1^N,\dots,Z_n^N).
\]
\end{theorem}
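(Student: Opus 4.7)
The plan is to follow the approach of Bonahon and Wong in~\cite{BWrep}, reducing the problem through the multiplicativity of $\Tr_T^\omega$ to an identity internal to the Chekhov-Fock algebra. Since $\Tr_T^\omega$ is an algebra homomorphism (Theorem~\ref{thm:BWmain}) and $F_N$ is a polynomial with integer coefficients, we immediately obtain
\[
\Tr_T^\omega(F_N([K])) = F_N\bigl(\Tr_T^\omega([K])\bigr).
\]
Moreover, because $(Z_i^N)(Z_j^N) = \omega^{2N^2\varepsilon_{ij}} Z_j^N Z_i^N = \iota^{2\varepsilon_{ij}} Z_j^N Z_i^N$, the substitution $Z_i\mapsto Z_i^N$ defines an algebra embedding $\mathcal{Z}_T^\iota\hookrightarrow\mathcal{Z}_T^\omega$, so both sides of the desired identity live naturally in $\mathcal{Z}_T^\omega$, and the task is to verify the identity there.

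Next, I would put $K$ in good position with respect to a split ideal triangulation $\widehat{T}$ and expand $\Tr_T^\omega([K])$ using the state sum of Bonahon and Wong. This produces a finite expression
\[
\Tr_T^\omega([K]) = \sum_s C_s(\omega)\,M_s(\omega),
\]
where $s$ ranges over compatible states, each $M_s(\omega) = [Z_1^{\mu_1(s)}\cdots Z_n^{\mu_n(s)}]$ is a Weyl-ordered monomial, and each coefficient $C_s(\omega)\in\mathbb{Z}[\omega,\omega^{-1}]$ is the product of the explicit biangle factors of Definition~\ref{def:computebiangle} and the triangle factors from Theorem~\ref{thm:BWmain}. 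The same combinatorial recipe with $\omega$ replaced by $\iota$ computes $\Tr_T^\iota([K])$, and the substitution $Z_i\mapsto Z_i^N$ realizes the right-hand side of the theorem as an element of $\mathcal{Z}_T^\omega$.

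The heart of the argument is then a ``Chebyshev-Frobenius'' identity: when $\omega^4$ is a primitive $N$th root of unity with $N$ odd, $F_N$ applied to the noncommutative sum above distributes across the states, and each term $M_s(\omega)^N$ matches, up to a carefully tracked scalar, the Weyl-ordered monomial $[(Z_1^N)^{\mu_1(s)}\cdots(Z_n^N)^{\mu_n(s)}]_\iota$ appearing in the state sum for $\Tr_T^\iota([K])(Z_1^N,\dots,Z_n^N)$. Two phenomena make this possible at the special parameter values. First, the $N$th powers of the noncommuting monomials $M_s(\omega)$ land in the commutative subalgebra generated by the $Z_i^N$, so the $N$th powers of distinct $M_s(\omega)$ commute with each other. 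Second, the $q$-binomial coefficients that would ordinarily appear as obstructions to distributing $F_N$ over a sum of noncommuting terms vanish at $N$th roots of unity. Combining these with the oddness of $N$ (which ensures that signs $(-1)^k$ are preserved under $k\mapsto Nk$) and the explicit relation $\iota = \omega^{N^2}$, a term-by-term bookkeeping reorganizes $F_N(\sum_s C_s(\omega)\,M_s(\omega))$ into $\Tr_T^\iota([K])(Z_1^N,\dots,Z_n^N)$.

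The main obstacle is precisely this Chebyshev-Frobenius identity, which is the deep algebraic content of~\cite{BWrep}. I expect the cleanest route is to first establish an abstract Chebyshev-Frobenius homomorphism of skein algebras, sending a simple closed curve $[K]\in\mathcal{S}^\kappa(S)$ to $F_N([K])$ regarded as an element of a suitable skein algebra at parameter $\iota$, by a direct verification that $[K]\mapsto F_N([K])$ is compatible with the skein relations at roots of unity, and then transport this identity across the quantum trace map using the compatibility result of Theorem~\ref{thm:BWchangetriangulation} together with the multiplicativity of $\Tr_T^\omega$.
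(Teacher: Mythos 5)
The paper does not prove this statement: Theorem~\ref{thm:BWFrobenius} is imported verbatim from Bonahon and Wong's paper~\cite{BWrep} (their Theorem~21), so there is no internal argument in this paper to compare your attempt against. The theorem is treated as a black box and invoked once, in the proof of part~6 of Theorem~\ref{thm:main}.

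Judged on its own terms, your proposal correctly carries out the easy reduction $\Tr_T^\omega(F_N([K]))=F_N(\Tr_T^\omega([K]))$ via the algebra-homomorphism property, correctly observes that $Z_i\mapsto Z_i^N$ embeds $\mathcal{Z}_T^\iota$ into $\mathcal{Z}_T^\omega$, and correctly identifies the Chebyshev--Frobenius identity $F_N(\Tr_T^\omega([K]))=\Tr_T^\iota([K])(Z_1^N,\dots,Z_n^N)$ as the real content. But the sketch you give of that identity does not close the gap. The heuristic that $q$-binomial coefficients ``vanish at roots of unity'' while $N$th powers of the monomials $M_s$ commute would at best yield a Frobenius property for the single power $t^N$, not for the full Chebyshev polynomial $F_N(t)$, which has a cascade of lower-order terms; no simple binomial or commutativity argument reduces $F_N\bigl(\sum_s C_s M_s\bigr)$ to a sum over the individual $M_s$. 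The actual proof in~\cite{BWrep} is a lengthy term-by-term analysis of the state sum exhibiting the ``miraculous cancellations'' of that paper's title, and it does not admit the conceptual shortcut you describe. Your fallback two-step plan---first establish a Chebyshev--Frobenius homomorphism on skein algebras, then ``transport it across'' the quantum trace---is also not a reduction, because the second step is precisely the statement to be proved: what is at issue is exactly whether $\Tr_T^\omega$ intertwines the skein-side operation $[K]\mapsto F_N([K])$ with the Chekhov--Fock-side substitution $Z_i\mapsto Z_i^N$, and neither Theorem~\ref{thm:BWchangetriangulation} nor multiplicativity gives this for free.
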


\subsection{Results on Chebyshev polynomials}

In this section, we will introduce a technical tool, the notion of an inverse Chebyshev polynomial. This terminology is not standard and will be used again only in Section~\ref{sec:ProductExpansion}.

\begin{definition}
For a positive integer $k$, an \emph{inverse Chebyshev polynomial} of degree $k$ is a polynomial
\[
\tilde{F}_k(t) = \sum_{i=0}^k c_{k,i} \, t^i  \in \mathbb{Z}[t]
\]
such that
\begin{align*}
({\rm Tr} A)^k = c_{k,0} + \sum_{i=1}^k c_{k,i} \, {\rm Tr}(A^i)
\end{align*}
for all $A\in SL(2,\mathbb{C})$.
\end{definition}

Note that this last equation is different from the equation $({\rm Tr} A)^k = {\rm Tr}( \tilde{F}_k(A) )$ because of the constant term $c_{k,0}$. If we like, we can rewrite it as
\begin{align*}
(\Tr A)^k = {\rm Tr}(\tilde{F}_k(A)) - \tilde{F}_k(0)
\end{align*}
for all $A\in SL(2,\mathbb{C})$ because $\Tr(A^0) = 2$ and $\tilde{F}_k(0)=c_{k,0}$.

\begin{lemma}
For each positive integer $k$, there exists an inverse Chebyshev polynomial of degree $k$, which is a monic polynomial whose coefficients are nonnegative integers.
\end{lemma}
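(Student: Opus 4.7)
The plan is to produce $\tilde{F}_k$ by an explicit closed formula obtained from the binomial theorem. For a diagonal matrix $A = \mathrm{diag}(\lambda,\lambda^{-1}) \in SL_2(\mathbb{C})$, we have $\Tr A = \lambda + \lambda^{-1}$ and $\Tr(A^m) = \lambda^m + \lambda^{-m}$ for every integer $m \geq 0$. Expanding by the binomial theorem gives
\[
(\Tr A)^k = \sum_{j=0}^k \binom{k}{j} \lambda^{k-2j}.
\]

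Next, I would pair the $j$-th and $(k-j)$-th summands, which share the coefficient $\binom{k}{j}$ and carry opposite exponents $\pm(k-2j)$: such a pair contributes $\binom{k}{j}\bigl(\lambda^{k-2j} + \lambda^{-(k-2j)}\bigr) = \binom{k}{j}\Tr(A^{k-2j})$, except for the middle summand $j = k/2$, which appears only when $k$ is even and contributes the constant $\binom{k}{k/2}$. Setting $m = k - 2j$, this rewrites the expansion as
\[
(\Tr A)^k = [k \text{ even}]\binom{k}{k/2} + \sum_{\substack{1 \leq m \leq k \\ k - m \text{ even}}} \binom{k}{(k-m)/2}\,\Tr(A^m),
\]
where $[\,\cdot\,]$ is the Iverson bracket. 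I would then take $\tilde{F}_k(t)$ to be the polynomial with constant term $[k \text{ even}]\binom{k}{k/2}$ and with $t^m$-coefficient $\binom{k}{(k-m)/2}$ for $1 \leq m \leq k$ of the same parity as $k$ (and zero otherwise). All its coefficients are binomial coefficients, hence nonnegative integers, and the leading coefficient is $\binom{k}{0} = 1$, so $\tilde{F}_k$ is monic of degree~$k$.

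Finally, I would promote the defining identity from diagonal matrices to all of $SL_2(\mathbb{C})$ as follows. By Proposition~\ref{prop:Chebyshevtrace}, $\Tr(A^m) = F_m(\Tr A)$ for every $A \in SL_2(\mathbb{C})$, so the defining identity is equivalent to the polynomial identity $t^k = c_{k,0} + \sum_{i=1}^k c_{k,i}\,F_i(t)$ in $\mathbb{C}[t]$. The computation above already verifies this identity at every $t$ in the image of the surjective map $\lambda \mapsto \lambda + \lambda^{-1}$, namely all of $\mathbb{C}$, so the two sides coincide as polynomials and hence the identity holds on all of $SL_2(\mathbb{C})$. I do not expect any serious obstacle; the only bookkeeping is the uniform treatment of the two parity cases of $k$.
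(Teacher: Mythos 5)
Your proof is correct, and it takes a genuinely different route from the paper's. The paper proves existence by induction on $k$: starting from $\tilde{F}_1(t)=t$, it repeatedly multiplies by $\Tr A$ and applies the identity $\Tr(X)\Tr(B)=\Tr(XB)+\Tr(XB^{-1})$ to regroup, reading off the coefficients of $\tilde{F}_{k+1}$ from those of $\tilde{F}_k$; no closed formula is produced. You instead give an explicit closed formula — $c_{k,m}=\binom{k}{(k-m)/2}$ for $m\equiv k\pmod 2$ and $c_{k,0}=\binom{k}{k/2}$ when $k$ is even — by expanding $(\lambda+\lambda^{-1})^k$ with the binomial theorem and pairing terms, then promoting from the trace values attained on diagonal matrices to all of $SL_2(\mathbb{C})$ via Proposition~\ref{prop:Chebyshevtrace} and the surjectivity of $\lambda\mapsto\lambda+\lambda^{-1}$ onto $\mathbb{C}$. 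Both proofs are sound; yours has the advantage of yielding the coefficients explicitly and making monicity and nonnegativity immediate, while the paper's recursion is self-contained in the sense that it never leaves the trace calculus. A small remark: your promotion step can be bypassed entirely, since for any $A\in SL_2(\mathbb{C})$ (diagonalizable or not) the eigenvalues of $A$ are $\lambda,\lambda^{-1}$ counted with multiplicity and $\Tr(A^m)=\lambda^m+\lambda^{-m}$ holds by passing to Jordan form, so the binomial expansion already applies to every $A$ directly — but the argument as you wrote it is also valid.
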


\begin{proof}
We will prove the existence of a monic polynomial $\tilde{F}_k(t)$ of degree $k$ with nonnegative integer coefficients satisfying the desired property by induction. Observe that $\tilde{F}_1(t) =t$ gives the desired result in the case $k=1$. Let $k$ be a positive integer, and suppose that we have a polynomial
\begin{align*}
\tilde{F}_k(t) = \sum_{i=0}^k c_{k,i} \, t^i  \in \mathbb{Z}_{\geq0}[t]
\end{align*}
with $c_{k,k}=1$ which is an inverse Chebyshev polynomial. Observe that for any $A\in SL(2,\mathbb{C})$, we have
\begin{align*}
(\Tr A)^{k+1} &= (\Tr A)^k ({\rm Tr} A)=(\Tr(\tilde{F}_k(A)) - \tilde{F}_k(0)) \cdot (\Tr A) 
= \Tr(\tilde{F}_k(A))\cdot (\Tr A) - c_{k,0} (\Tr A) \\
&= \Tr(\tilde{F}_k(A)\cdot A) + \Tr(\tilde{F}_k(A)\cdot A^{-1})  - c_{k,0} (\Tr A) \\
&= \Tr\left(\sum_{i=0}^k c_{k,i}A^{i+1}\right) + \Tr\left(\sum_{i=0}^k c_{k,i}A^{i-1}\right)  - c_{k,0} (\Tr A) \\
&= \Tr\left(c_{k,k} A^{k+1} + c_{k,k-1} A^k + \left(\sum_{i=1}^{k-1} (c_{k,i-1}+c_{k,i+1}) A^i\right) + c_{k,0} A^{-1} + c_{k,1} A^0 - c_{k,0} A \right) \\
&= \Tr\left( A^{k+1} + c_{k,k-1} A^k + \left(\sum_{i=1}^{k-1} (c_{k,i-1}+c_{k,i+1}) A^i \right)+ c_{k,1} A^0\right)
\end{align*}
where the sums $\sum_{i=1}^{k-1}$ are meant to be zero if $k-1<1$. The last equality comes from
\[
\Tr(A) = \Tr(A^{-1}),
\]
which holds for all $A\in SL(2,\mathbb{C})$ because the the complex eigenvalues of $A$ are always~$\alpha$ and~$\alpha^{-1}$ for some nonzero complex number $\alpha$ and thus ${\rm Tr} (A)$ and ${\rm Tr}(A^{-1})$ both equal $\alpha+\alpha^{-1}$.

Now define a polynomial $\tilde{F}_{k+1}(t) \in \mathbb{Z}[t]$ by 
\begin{align*}
\tilde{F}_{k+1}(t)\coloneqq t^{k+1} + c_{k,k-1} t^k + \left(\sum_{i=1}^{k-1} (c_{k,i-1}+c_{k,i+1}) \, t^i \right) + 2 c_{k,1}
\end{align*}
where the sum $\sum_{i=0}^{k-1}$ is meant to be zero if $k-1<1$.  Then we have $({\rm Tr}A)^{k+1} = {\rm Tr}(\tilde{F}_{k+1}(A)) - \tilde{F}_{k+1}(0)$ for all $A\in SL(2,\mathbb{C})$. Note also that $\tilde{F}_{k+1}(t)$ is monic with degree $k+1$, and that all its coefficients are non-negative integers, because all of $c_{k,k-1},\dots,c_{k,1},c_{k,0}$ are. The existence of $\tilde{F}_k(t)$ for all positive integers $k$ follows by induction.
\end{proof}

\begin{lemma}
\label{lem:inverseChebyshev}
For each positive integer $k$, we have
\[
t^k = F_k(t) + c_{k,k-1} F_{k-1}(t) + \cdots + c_{k,1} F_1(t) + c_{k,0} \in\mathbb{Z}[t]
\]
where $\tilde{F}_k(t) = \sum_{i=0}^k c_{k,i} \, t^i \in \mathbb{Z}_{\geq0}[t]$ is an inverse Chebyshev polynomial of degree~$k$.
\end{lemma}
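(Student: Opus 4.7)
The plan is to combine the defining property of the inverse Chebyshev polynomial with Proposition~\ref{prop:Chebyshevtrace} and then conclude via a density argument.

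First I would apply the definition of $\tilde{F}_k$ directly: for every $A \in SL(2,\mathbb{C})$,
\[
(\Tr A)^k = c_{k,0} + \sum_{i=1}^{k} c_{k,i}\,\Tr(A^i).
\]
Next, substitute $\Tr(A^i) = F_i(\Tr A)$, which is exactly the content of Proposition~\ref{prop:Chebyshevtrace}. Setting $t \coloneqq \Tr A$ and recalling that $c_{k,k} = 1$ (since $\tilde{F}_k$ is monic of degree $k$), this yields the identity
\[
t^k = F_k(t) + c_{k,k-1} F_{k-1}(t) + \cdots + c_{k,1} F_1(t) + c_{k,0}
\]
valid for every $t$ that lies in the image of the trace map $\Tr : SL(2,\mathbb{C}) \to \mathbb{C}$.

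The remaining step is to promote this from a pointwise identity (over $\Tr(SL_2(\mathbb{C}))$) to a polynomial identity in $\mathbb{Z}[t]$. For this I would observe that $\Tr : SL(2,\mathbb{C}) \to \mathbb{C}$ is surjective: the companion matrix $\bigl(\begin{smallmatrix} t & -1 \\ 1 & 0 \end{smallmatrix}\bigr)$ has determinant $1$ and trace $t$, so every complex number occurs. Hence the two polynomials in $\mathbb{Z}[t]$ appearing on the two sides agree on infinitely many values of $t$ and are therefore equal as polynomials, giving the lemma.

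There is no real obstacle here beyond bookkeeping; the only thing to be careful about is making sure that the $i=0$ contribution on the trace side matches the constant $c_{k,0}$ on the polynomial side. This is automatic because $F_0$ does not appear in the claimed formula and the $i=0$ term has already been singled out in the definition of an inverse Chebyshev polynomial. Once this is observed, the proof is just a direct substitution followed by the surjectivity remark.
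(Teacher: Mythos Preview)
Your proof is correct and follows essentially the same route as the paper: apply the defining identity of $\tilde{F}_k$, substitute $\Tr(A^i)=F_i(\Tr A)$ from Proposition~\ref{prop:Chebyshevtrace}, and then pass from a pointwise identity in $\Tr A$ to a polynomial identity. The paper leaves the last step implicit, while you justify it explicitly via surjectivity of $\Tr$ using the companion matrix; this is a minor expository improvement, not a different argument.
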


\begin{proof}
For any $k\ge 1$ and any $A\in SL(2,\mathbb{C})$, we have
\begin{align*}
(\Tr A)^k = c_{k,0} + \sum_{i=1}^k c_{k,i} \Tr(A^i) = c_{k,0} + \sum_{i=1}^k c_{k,i} F_i(\Tr A).
\end{align*}
So if we let $F(t)\coloneqq t^k - c_{k,0} - \sum_{i=1}^k c_{k,i} F_i(t) \in\mathbb{R}[t]$, then $F(\Tr A) = 0$ for all $A\in SL(2,\mathbb{C})$. It follows that $F(t)=0 \in \mathbb{Z}[t]$.
\end{proof}

\subsection{Product expansion}
\label{sec:ProductExpansion}

In this section, we prove a quantum analog of the classical formula for the product $\mathbb{I}(\ell)\mathbb{I}(\ell')$ in Theorem~\ref{thm:classicalproperties}. We begin with some facts about curves on a punctured surface.

Let $\ell$ be a lamination on $S$ consisting of a single peripheral loop of weight~1 that retracts to a puncture~$p$. By applying a homotopy, we may assume that this loop intersects the edges of the ideal triangulation $T$ in the minimal number of points. Orient this loop so that it travels in the counterclockwise direction around the puncture $p$.

\begin{definition}
A triangle $t$ of $T$ will be called \emph{relevant} if one of its vertices is $p$ and \emph{very relevant} if all of its vertices are $p$. Thus a very relevant triangle intersects $\ell$ in three segments as illustrated below.
\[
\xy /l1.25pc/:
{\xypolygon3"A"{~:{(-3,0):}}},
(2.1,-1)*{}="i1";
(2.7,0)*{}="i2";
(-0.1,-1)*{}="j1";
(-0.7,0)*{}="j2";
(1.7,1.5)*{}="k1";
(0.2,1.5)*{}="k2";
"i1";"j1" **\crv{(1.5,-0.5) & (0.5,-0.5)};
"j2";"k2" **\crv{(0.5,0.5) & (0.25,1.5)};
"i2";"k1" **\crv{(2.5,0) & (1.5,0.5)};
(1,-1)*{s_1};
(-0.25,1)*{s_2};
(2.25,1)*{s_3};
\endxy
\]
Such a triangle will be called \emph{weird} if we traverse the segments in the cyclic order $s_1$,~$s_2$,~$s_3$ as we travel along the curve $\ell$ in the direction given by the orientation.
\end{definition}

\begin{lemma}
\label{lem:weird}
Among all relevant triangles, there is one that is either not very relevant or very relevant but not weird.
\end{lemma}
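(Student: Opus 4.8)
The plan is to prove the contrapositive: assume that \emph{every} relevant triangle is very relevant \emph{and} weird, and derive a contradiction. The first step is a rigidity reduction. If an edge $i$ of $T$ has $p$ as an endpoint, then the two triangles adjacent to $i$ both have $p$ as a vertex, hence are relevant, hence (by assumption) very relevant, hence \emph{all} their vertices equal $p$; in particular the other endpoint of $i$ is also $p$. Therefore the union of all very relevant triangles has no frontier edge, so it is a union of components of $S$; since it contains $p$, every triangle of $T$ is very relevant. Then $p$ is the only vertex of $T$, so $S$ is a closed surface of genus $g$ with one puncture, and $2E=3F$ together with $1-E+F=2-2g$ gives $F=4g-2$, so $g\ge 1$. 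Since every edge now has both endpoints at $p$ and $\ell$ is in minimal position, $\ell$ meets each edge in exactly two points and each triangle in exactly three arcs, one cutting off each corner.

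Next I would record the local behaviour forced by the orientation of $\ell$, and restate ``weird''. Write the corners $A,B,C$ of a triangle $t$ in counterclockwise order, so the corner at $A$ lies between the edges $e_{AB}$ and $e_{CA}$, and cyclically. Since $\ell$ runs counterclockwise around $p$, the puncture lies on the left of $\ell$; tracing this through the wedge at a corner shows that, following $\ell$, one always \emph{enters} the corner at $A$ across $e_{AB}$ and \emph{leaves} it across $e_{CA}$, and cyclically for $B$ and $C$. Hence between two successive visits of $\ell$ to $t$, the loop departs across the ``exit edge'' of the corner it leaves and returns across the ``entry edge'' of the corner it reaches; checking the three cases shows these two edges coincide precisely when the arrival corner is the one \emph{clockwise} from the departure corner. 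Comparing with the picture defining ``weird'', I conclude: $t$ is weird if and only if each of the three ``excursions'' of $\ell$ away from $t$ departs from $t$ and returns to $t$ through one and the same edge of $t$.

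The heart of the argument is a descent on the \emph{length} of an excursion, by which I mean the number of arcs of $\ell$ it contains (equivalently, the number of corners of triangles other than $t$ that $\ell$ traverses during it). Assuming all triangles are weird, choose among the $3F$ excursions one of minimal length — say $\alpha$, an excursion of a triangle $t$ through an edge $e$ — and let $t'$ be the triangle on the other side of $e$, which is relevant, hence very relevant and weird. When $\ell$ leaves $t$ through $e$ it enters the corner $c'$ of $t'$ at that endpoint of $e$, which is therefore the first corner of $\alpha$; likewise the corner $c''$ of $t'$ at the other endpoint of $e$ is the last corner of $\alpha$, and $c'\ne c''$. The excursion $\beta$ of $t'$ beginning just after $\ell$ leaves $c'$ is again an out-and-back excursion of $t'$; since $\ell$ avoids $t$ throughout $\alpha$, $\beta$ runs forward inside $\alpha$ from $c'$ up to the first return of $\ell$ to $t'$, which occurs at or before $c''$, so $\beta$ lies in the subarc of $\alpha$ from $c'$ to $c''$ and has length at most (length of $\alpha$) $-2$. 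If $\alpha$ has three or more corners, this contradicts minimality of $\alpha$; and the remaining cases cannot occur, for elementary reasons — an excursion with a single corner would force $\ell$ to enter and leave $c'$ through the same edge $e$, and one with two corners would force the distinct corners $c'$ and $c''$ of $t'$ both to be bounded by the same pair of edges of $t'$. This contradiction proves the lemma.

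I expect the descent step to require the most care: one must set up excursions, their lengths, and the entry/exit edges precisely enough that the containment $\beta\subseteq\alpha$ and the length estimate are unambiguous, and one must also exclude — or treat as an additional short case — the degenerate situation in which a triangle is glued to itself along an edge, a situation ruled out for the ideal triangulations considered here. By contrast, the reduction to the once-punctured closed-surface case and the local ``entry/exit edge'' analysis are routine once the conventions are fixed.
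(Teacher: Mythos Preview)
Your argument is correct and takes a genuinely different route from the paper's. Both proofs argue by contradiction, assuming every relevant triangle is very relevant and weird, but the mechanisms differ. The paper traces $\ell$ once around $p$ and observes that weirdness forces the sequence of edges crossed to have the block form $i_1,\dots,i_1,i_2,\dots,i_2,i_3,\dots,i_3$; applying this to the triangle just entered shows that the \emph{next} edge after $i_1$ must be new, then the next after that must be new, and so on, producing an unbounded sequence of distinct edges---contradicting finiteness. You instead make the rigidity reduction to a once-punctured closed surface, restate weirdness as the ``same-edge excursion'' property, and run a descent on the length of a shortest excursion. The paper's argument is shorter and needs neither the reduction nor the reformulation; your approach is more structural and makes the contradiction quantitative.

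One point deserves tightening. You dismiss self-folded triangles as ``ruled out for the ideal triangulations considered here,'' but the paper does allow them in general (see the second clause in the definition of $X_e$). What saves you is your own reduction: a self-folded triangle always has two \emph{distinct} ideal vertices (the tip and the base), so it cannot occur in a triangulation with a single vertex, which is exactly the situation you have reduced to. With that observation made explicit, your small-length cases $|\alpha|=1,2$ are indeed impossible, and for $|\alpha|\ge 3$ the excursion $\beta$ of $t'$ starting after $c'$ satisfies $1\le|\beta|\le|\alpha|-2$, giving the desired contradiction. Your local entry/exit analysis and the identification of ``weird'' with the clockwise cyclic order of corners (equivalently, same-edge excursions) are consistent with the paper's picture conventions.
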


\begin{proof}
Suppose for a contradiction that all triangles are very relevant and weird. If we travel once around the curve in the counterclockwise direction, starting from some chosen initial point in the interior of a triangle, then we will meet each edge twice. Let $i_1$ be the first edge that we meet when traveling away from the initial point, and label the other edges of the triangle containing this initial point in its interior as follows:
\[
\xy /l1.25pc/:
{\xypolygon3"A"{~:{(-3,0):}}},
(2.1,-1)*{}="i1";
(2.7,0)*{}="i2";
(-0.1,-1)*{}="j1";
(-0.7,0)*{}="j2";
(1.7,1.5)*{}="k1";
(0.2,1.5)*{}="k2";
"i1";"j1" **\crv{(1.5,-0.5) & (0.5,-0.5)};
"j2";"k2" **\crv{(0.5,0.5) & (0.25,1.5)};
"i2";"k1" **\crv{(2.5,0) & (1.5,0.5)};
(2.75,-1)*{i_1};
(-0.75,-1)*{i_2};
(1,2)*{i_3};
\endxy
\]
Then the sequence of edges that we meet as we travel around the curve has the form $i_1,\dots,i_1,i_2,\dots,i_2,i_3,\dots,i_3$. The second edge in this sequence cannot be $i_1$, $i_2$, or~$i_3$, so it must be some other edge $i_4$.
\[
\xy /l1.5pc/:
{\xypolygon4"A"{~:{(2,2):}}},
{"A1"\PATH~={**@{-}}'"A3"},
(2.1,-1.75)*{}="i1";
(2.75,-1)*{}="i2";
(-0.1,-1.75)*{}="j1";
(-0.75,-1)*{}="j2";
(-0.1,1.75)*{}="k1";
(-0.75,1)*{}="k2";
(2.1,1.75)*{}="l1";
(2.75,1)*{}="l2";
"i1";"j1" **\crv{(1.5,-1) & (0.5,-1)};
"j2";"k2" **\crv{(0,-0.5) & (0,0.5)};
"k1";"l1" **\crv{(0.5,1) & (1.5,1)};
"i2";"l2" **\crv{(2,-0.5) & (2,0.5)};
(1.25,0)*{i_1};
(-0.75,-1.55)*{i_2};
(-0.75,1.55)*{i_3};
(2.75,1.55)*{i_4};
(2.75,-1.55)*{i_5};
\endxy
\]
The third side of the triangle having $i_1$ and $i_4$ as sides cannot be $i_2$, $i_3$, or~$i_4$ since each of these has already appeared twice in the sequence. So it must be some other edge $i_5$. Thus the sequence has the form 
\[
i_1,i_4,\dots,i_4,i_5,\dots,i_5,i_1,i_2,\dots,i_2,i_3,\dots,i_3.
\]
The third edge of this sequence cannot be one of the edges $i_1$, $i_2$, $i_3$, $i_4$, or~$i_5$ since each of these has already appeared twice in the sequence. Since there are only finitely many triangles, this process cannot continue forever, and we have a contradiction.
\end{proof}

An edge of $T$ will be called \emph{relevant} if one of its endpoints is $p$. We now describe an algorithm for assigning an orientation to each relevant edge. To do this, we begin by choosing a point on~$\ell$ that lies in the interior of some triangle. We then travel along $\ell$ in the counterclockwise direction around $p$ starting from this point and assign orientations to the edges we meet as follows:
\begin{enumerate}
\item When we meet a relevant edge that has not been assigned an orientation, we give it the ``left'' orientation (pointing into the puncture).

\item When we meet an edge that has already been assigned an orientation, we pass it by, leaving the orientation as it is.
\end{enumerate}
In this way, we assign an orientation to every relevant edge. These orientations are completely determined by the choice of starting point.

\begin{lemma}
\label{lem:nocyclic}
For an appropriate choice of initial point on $\ell$, we can orient the relevant edges of $T$ by the algorithm described above, and there will be no very relevant cyclically oriented triangles.
\end{lemma}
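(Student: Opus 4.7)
The plan is to apply Lemma~\ref{lem:weird} to produce a relevant triangle $t_0$ that is either not very relevant, or very relevant but not weird, and then to place the initial point of the algorithm in the interior of a carefully chosen segment of $\ell \cap t_0$. The first step will be to understand how the initial point controls orientations: since $\ell$ is a small loop enclosing $p$ and a very relevant edge is a loop based at $p$, the peripheral loop $\ell$ crosses each very relevant edge exactly twice, with the two crossings occurring in opposite directions across the edge. The two ``left'' orientations produced by these crossings are therefore opposite, and the algorithm assigns to the edge the orientation coming from whichever of its two crossings is reached first after the initial point. For relevant but not very relevant edges, there is no ambiguity---the unique ``left'' orientation always points toward $p$.

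Next I would relate the notion of weirdness to cyclic orientation. The key claim I aim to establish is that, for an initial point placed \emph{outside} a given very relevant triangle $t$, the triangle $t$ becomes cyclically oriented if and only if $t$ is weird: in a weird triangle, the cyclic order in which the three segments $s_1,s_2,s_3$ are visited matches the natural cyclic order of these segments in $t$, which aligns the three first-crossings into a rotational pattern. Conversely, placing the initial point inside a segment $s_i$ of a very relevant triangle $t$ swaps the first-crossing direction of each of the two edges of $t$ adjacent to $s_i$, which destroys any would-be cyclic pattern within $t$ itself.

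With these tools, the two cases from Lemma~\ref{lem:weird} can be dispatched. If $t_0$ is not very relevant, then $t_0$ itself is not a candidate for a cyclically oriented very relevant triangle, and the initial point will be placed in any segment of $\ell\cap t_0$; if $t_0$ is very relevant but not weird, then $t_0$ is already non-cyclic, and the initial point will be placed in one of its segments. In either case one must still verify that this single local choice suffices to block cyclic orientation in \emph{every} very relevant triangle. This global verification is the main obstacle I anticipate: I would argue it by contradiction, showing that if some very relevant triangle remained cyclically oriented for every admissible initial point compatible with the Lemma~\ref{lem:weird} choice of $t_0$, then a careful combinatorial tracking of the first-crossings along $\ell$ would force every relevant triangle to be simultaneously very relevant and weird, contradicting Lemma~\ref{lem:weird}.
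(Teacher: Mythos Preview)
Your setup matches the paper's: invoke Lemma~\ref{lem:weird} to find a triangle $t_0$ that is either not very relevant or very relevant but not weird, and place the initial point there. However, your ``key claim'' --- that for an initial point outside a very relevant triangle $t$, the triangle becomes cyclically oriented if and only if $t$ is weird --- is not correct, and this is what creates the artificial ``global verification'' obstacle you then struggle with. In fact, \emph{no} very relevant triangle with the initial point outside it can become cyclically oriented, weird or not; so once the initial point sits in $t_0$, there is nothing further to verify globally.

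The paper's argument makes this explicit by treating the two possible cyclic orientations of a hypothetical very relevant triangle separately. For one cyclic orientation, tracing the algorithm backward from the first crossing of edge~$1$ forces an infinite regress (edge~$2$ must already have been oriented before edge~$1$, hence edge~$3$ before edge~$2$, hence edge~$1$ before edge~$3$), a contradiction that holds regardless of where the initial point lies. For the other cyclic orientation, a short check shows that the initial point must lie inside this very triangle and that the triangle must be weird --- contradicting the choice of $t_0$. This two-case analysis is the missing idea; your biconditional characterization in terms of weirdness is both unproved and (in the ``if'' direction) false, and the vague contradiction sketch at the end does not recover the argument.
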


\begin{proof}
Orient $\ell$ in the counterclockwise direction around $p$. By Lemma~\ref{lem:weird}, we know that there exists a triangle which is either not very relevant or very relevant but not weird. Let us choose the initial point to lie in the interior of this triangle. Then we use the algorithm to orient all relevant edges of~$T$.

Suppose there is a very relevant triangle oriented as in the picture below.
\[
\xy /l1.25pc/:
{\xypolygon3"A"{~:{(-3,0):}}},
{"A2"\ar@{->}"A1"},
{"A3"\ar@{->}"A2"},
{"A1"\ar@{->}"A3"},
(2.1,-1)*{}="i1";
(2.7,0)*{}="i2";
(-0.1,-1)*{}="j1";
(-0.7,0)*{}="j2";
(1.7,1.5)*{}="k1";
(0.2,1.5)*{}="k2";
"i1";"j1" **\crv{(1.5,-0.5) & (0.5,-0.5)};
"j2";"k2" **\crv{(0.5,0.5) & (0.25,1.5)};
"i2";"k1" **\crv{(2.5,0) & (1.5,0.5)};
(2.75,-1)*{1};
(-0.75,-1)*{2};
(1,2)*{3};
\endxy
\]
Consider the algorithm that we used to assign orientations to this triangle. When we first met edge~1, we gave it the left orientation and then followed the curve to edge~2. Since edge~2 has the right orientation, we must have previously assigned its orientation and followed the curve to edge~3. But then we must have previously assigned this orientation and followed the curve to edge~1, a contradiction. On the other hand, suppose there is a very relevant triangle oriented as in the picture below.
\[
\xy /l1.25pc/:
{\xypolygon3"A"{~:{(-3,0):}}},
{"A1"\ar@{->}"A2"},
{"A2"\ar@{->}"A3"},
{"A3"\ar@{->}"A1"},
(2.1,-1)*{}="i1";
(2.7,0)*{}="i2";
(-0.1,-1)*{}="j1";
(-0.7,0)*{}="j2";
(1.7,1.5)*{}="k1";
(0.2,1.5)*{}="k2";
"i1";"j1" **\crv{(1.5,-0.5) & (0.5,-0.5)};
"j2";"k2" **\crv{(0.5,0.5) & (0.25,1.5)};
"i2";"k1" **\crv{(2.5,0) & (1.5,0.5)};
(2.75,-1)*{1};
(-0.75,-1)*{2};
(1,2)*{3};
\endxy
\]
In this case, one can check that the initial point must lie in the interior of this triangle. Moreover, the triangle must be weird in the sense defined above. This contradicts our choice of the initial point. The lemma follows.
\end{proof}

\begin{lemma}
Let $\ell$ be a lamination consisting of a single peripheral loop of weight~1 on~$S$, and let $K$ be a framed link in $S\times[0,1]$ that projects to $\ell$ with constant elevation and the vertical framing at every point. Then $\Tr_T^\omega([K])$ is a Laurent polynomial in the variables $Z_i$ with coefficients in $\mathbb{Z}_{\geq0}[\omega,\omega^{-1}]$.
\end{lemma}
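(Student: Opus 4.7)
The plan is to compute $\Tr_T^\omega([K])$ directly via the Bonahon--Wong state sum with respect to a split ideal triangulation $\widehat{T}$ and show that every resulting coefficient lies in $\mathbb{Z}_{\geq 0}[\omega, \omega^{-1}]$. I would begin by orienting the relevant edges of $T$ using the algorithm preceding Lemma~\ref{lem:nocyclic}, choosing the initial point as guaranteed by that lemma so that no very relevant triangle ends up cyclically oriented. After putting $K$ in good position with respect to $\widehat{T}$ and decomposing $K$ into pieces $K_j \subseteq t_j \times [0,1]$ over triangles and $L_i \subseteq B_i \times [0,1]$ over biangles, I would expand
\[
\Tr_T^\omega([K]) = \sum_{\substack{\sigma_j,\,\tau_i \\ \text{compatible}}} \prod_{i=1}^{n} \Tr_{B_i}^\omega([L_i,\tau_i]) \, \bigotimes_{j=1}^{m} \Tr_{t_j}^\omega([K_j,\sigma_j]).
\]

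The biangle factors should then be dispensed with easily. Since $\ell$ is a single simple closed curve, each component of $L_i$ in a biangle $B_i$ is an arc crossing from one of its two parallel edges to the other, i.e., an arc of type $a$ in the notation of Definition~\ref{def:computebiangle}, with no arcs of type $b$ or type $c$ and no closed components. The formula of Definition~\ref{def:computebiangle} therefore collapses to $1$ whenever the state assigns matching signs to the two endpoints of each type $a$ arc, and to $0$ otherwise. In particular, no negative sign, no nontrivial power of $\omega$, and no factor of $(-\omega^4 - \omega^{-4})$ is contributed by any biangle.

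For the triangle factors, the effect of our orientation choice is that each arc in each triangle can be arranged to fall under part~2 of Theorem~\ref{thm:BWmain} rather than part~3, whose formula produces signed coefficients like $-\omega^{-5}$. The only obstacle to doing so is a very relevant triangle whose three sides are oriented cyclically, and this is precisely what Lemma~\ref{lem:nocyclic} rules out. Consequently $\Tr_{t_j}^\omega([K_j,\sigma_j])$ is a product of Weyl-ordered monomials of the form $[Z_a^{\sigma_1} Z_b^{\sigma_2}]$, each of which has a single positive integer power of $\omega$ as its coefficient, so the triangle factor itself lies in $\mathbb{Z}_{\geq 0}[\omega,\omega^{-1}]$ (or equals $0$).

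Multiplying these contributions and summing over compatible state assignments then produces a Laurent polynomial in the $Z_i$ whose coefficients are obtained from elements of $\mathbb{Z}_{\geq 0}[\omega,\omega^{-1}]$ by finite sums and products, hence remain in $\mathbb{Z}_{\geq 0}[\omega,\omega^{-1}]$. The hard part will be the triangle analysis: verifying in detail that, with the orientations supplied by Lemma~\ref{lem:nocyclic}, the picture conventions of Bonahon and Wong really do place every arc of $K_j$ in the generic configuration of part~2 of Theorem~\ref{thm:BWmain}, and not in the corner configuration of part~3 whose signed coefficients would derail the positivity argument.
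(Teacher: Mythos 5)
Your overall strategy is the same as the paper's — use Lemma~\ref{lem:nocyclic} to orient the relevant edges, put $K$ in good position with respect to $\widehat{T}$, and observe that the state sum only produces nonnegative coefficients. However, the proposal has its emphasis in the wrong place, and in consequence leaves the real step unjustified.

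First, your worry about landing in part~3 of Theorem~\ref{thm:BWmain} for the triangle factors is misplaced. In the state-sum definition of $\Tr_T^\omega([K,s])$ (recalled in the paper just before Lemma~\ref{lem:BWstar}), the triangle contribution $\Tr_{t_j}^\omega([k_i,\sigma_j])$ is \emph{always} taken from part~2 of Theorem~\ref{thm:BWmain}: each component of $K\cap(t_j\times[0,1])$ in good position is a constant-elevation arc joining two distinct sides, and the corresponding factor is $[Z_a^{\sigma_1}Z_b^{\sigma_2}]$ or $0$, never $-\omega^{-5}$. Part~3 of Theorem~\ref{thm:BWmain} is a normalization constraint on the family of quantum trace maps, not an alternative formula used inside the state sum. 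So the triangles never threaten positivity.

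The actual danger, which your argument asserts away rather than proves, is in the biangles. Good position only constrains the link over triangles; it does not forbid crossings in the projection to a biangle. If the projection to some biangle does have crossings, Definition~\ref{def:computebiangle} requires you first to resolve them by the skein relations, and the resolved skeins can contain arcs of type $b$ and $c$ as well as closed components — the latter contributing $(-\omega^4-\omega^{-4})^d$, which is not in $\mathbb{Z}_{\geq 0}[\omega,\omega^{-1}]$. Your sentence ``each component of $L_i$ in a biangle $B_i$ is an arc crossing from one of its two parallel edges to the other, i.e., an arc of type $a$'' describes the unresolved link, but says nothing about whether the projection has crossings, which is exactly what determines whether any type-$b$, type-$c$, or closed components appear.

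This is where Lemma~\ref{lem:nocyclic} is really needed, and the paper uses it for precisely this purpose. Because the orientation of the relevant edges produces no cyclically oriented very relevant triangle, one can choose distinct elevations for the arcs in each triangle that are simultaneously consistent with the picture conventions on all three adjacent biangles, and thus deform $K$ into good position \emph{so that the projection has no crossings anywhere}, biangles included. Once crossings are eliminated everywhere, every biangle factor is $0$ or $1$ and every triangle factor is a Weyl-ordered monomial, and the conclusion follows exactly as you say. So the fix is straightforward: replace the unsupported claim about biangles (and drop the discussion of part~3) with the statement that the no-cyclic-orientation property lets you realize $K$ in good position with a crossing-free projection.
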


\begin{proof}
By Lemma~\ref{lem:nocyclic}, we can orient the relevant edges of $T$ so that there are no cyclically oriented triangles. Consider the split ideal triangulation $\widehat{T}$ associated to~$T$. The orientations of the edges of $T$ induce orientations of the corresponding edges of~$\widehat{T}$. If $t$ is a triangle in~$\widehat{T}$, then $t$ is either not very relevant, meaning that it has at most two corners with a segment, or very relevant but not cyclically oriented, so we can choose distinct elevations for the components of $K\cap(t\times[0,1])$ in such a way that there are no crossings over $t$ and the elevations are consistent with the picture conventions following Definition~\ref{def:framedlink}. It follows that we can deform $K$ into good position with respect to $\widehat{T}$ in such a way that the resulting link has no crossings and is consistent with our picture conventions. Then the lemma follows from the definition of the quantum trace $\Tr_T^\omega([K])$.
\end{proof}

\begin{lemma}
\label{lem:traceperipheral}
Let $\ell$ be a lamination consisting of a single peripheral loop of weight~1 on $S$, and let $K$ be a framed link in $S\times[0,1]$ that projects to $\ell$ with constant elevation and the vertical framing at every point. Then 
\[
\Tr_T^\omega([K])=\mathbb{I}^\omega(\ell)+\mathbb{I}^\omega(-\ell).
\]
\end{lemma}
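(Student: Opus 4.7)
The plan is to compute $\Tr_T^\omega([K])$ directly via the Bonahon--Wong state sum and show that only two state assignments contribute nontrivially, namely the constant assignment of $+$ and the constant assignment of $-$, which will be identified with $\mathbb{I}^\omega(\ell)$ and $\mathbb{I}^\omega(-\ell)$, respectively.

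First I will invoke Lemma~\ref{lem:nocyclic} to orient the relevant edges of $T$ so that no very relevant triangle is cyclically oriented, and then, as in the proof of the preceding lemma, deform $K$ into good position with respect to the split ideal triangulation $\widehat{T}$ with no crossings. Expanding $\Tr_T^\omega([K])$ as a state sum over assignments of $\pm$ to the intersections of $K$ with the edges of $\widehat{T}$, I will use two local rules: any arc in a triangle $t_j$ whose endpoint states are $(\sigma_1,\sigma_2)=(-,+)$ kills the contribution by part~2 of Theorem~\ref{thm:BWmain}, while the biangle factors in Definition~\ref{def:computebiangle} force the states on the two parallel edges of every biangle to agree. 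Traversing $\ell$ in the direction given by its orientation, the first rule prohibits the sequence of states from transitioning from $-$ to $+$; since $\ell$ is closed, this forces the states to be constant. Hence $\Tr_T^\omega([K])=M_+ + M_-$, where $M_\pm$ is the contribution of the constant state $\pm$ and has multi-exponent vector $(\pm\mu_1,\dots,\pm\mu_n)$ in the generators $Z_i$.

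To identify the two monomials, I will observe that $M_+$ is the unique highest term of $\Tr_T^\omega([K])$, which by Lemma~\ref{lem:peripheralhighest} equals $[Z_1^{\mu_1}\cdots Z_n^{\mu_n}]=\mathbb{I}^\omega(\ell)$. For $M_-$, note that $*$ preserves the multi-exponent of each monomial of a Laurent polynomial while only rescaling its scalar coefficient, so the $*$-invariance of $\Tr_T^\omega([K])$ furnished by Lemma~\ref{lem:BWstar} forces $M_-$ to be individually $*$-invariant. Combining Theorem~\ref{thm:BWclassical} with the eigenvalue computation in Proposition~\ref{prop:monodromyperipheral}, the classical limit of $\Tr_T^\omega([K])$ is $Z_1^{\mu_1}\cdots Z_n^{\mu_n}+Z_1^{-\mu_1}\cdots Z_n^{-\mu_n}$, so the classical limit of $M_-$ is $Z_1^{-\mu_1}\cdots Z_n^{-\mu_n}$. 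Then Lemma~\ref{lem:Weyllimit}, applied using the fact (from the preceding lemma) that the coefficients of $\Tr_T^\omega([K])$ lie in $\mathbb{Z}_{\geq 0}[\omega,\omega^{-1}]$, yields $M_-=[Z_1^{-\mu_1}\cdots Z_n^{-\mu_n}]=\mathbb{I}^\omega(-\ell)$.

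I expect the main obstacle to lie in the state-constancy step, especially at very relevant triangles, where $\ell$ passes through three times producing three distinct arcs whose local $(-,+)$-killing constraints must be stitched together consistently along the closed loop. The orientation of relevant edges supplied by Lemma~\ref{lem:nocyclic} is precisely what makes the elevation-order conventions of the state sum compatible with the direction of traversal of $\ell$, and one must trace through Bonahon and Wong's picture conventions carefully to verify that the local $(-,+)$-killing rule really does translate into a clean global prohibition on $-$-to-$+$ transitions around $\ell$.
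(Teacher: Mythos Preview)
Your proof is correct, and the identification of $M_+$ and $M_-$ via $*$-invariance, the classical limit, and Lemma~\ref{lem:Weyllimit} matches the paper's argument exactly. Where you diverge is in establishing that $\Tr_T^\omega([K])$ has precisely two terms. The paper does this indirectly: it invokes the preceding lemma to say that all coefficients lie in $\mathbb{Z}_{\geq0}[\omega,\omega^{-1}]$, so no term can vanish in the classical limit; since the classical trace $\Tr_T^1([K])=\mathbb{I}^1(\ell)+\mathbb{I}^1(-\ell)$ has exactly two monomials, the quantum expression must as well. You instead argue combinatorially, tracing through the state sum to show that only the constant $+$ and constant $-$ assignments survive, using the triangularity encoded in the $(-,+)$-killing rule together with the biangle matching condition.

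Your route is more explicit and explains structurally \emph{why} only two terms appear (it is the quantum shadow of the fact that $\rho(\ell)$ is triangular for a peripheral $\ell$), whereas the paper's argument is shorter and avoids the bookkeeping of Bonahon--Wong's picture conventions entirely. One small remark: since your state-sum analysis already shows that each $M_\pm$ is a single Laurent $\omega$-monomial (the biangle factors are all $1$ for straight arcs with matching states, and each triangle factor is a Weyl-ordered monomial), you could in principle bypass the appeal to the positivity lemma in the $M_-$ step and conclude directly from $*$-invariance that the coefficient is $\omega^m$ for the correct $m$; but invoking positivity as you do is harmless and keeps the argument parallel to the paper's.
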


\begin{proof}
Recall that if $\ell$ is a peripheral curve, then the classical functions $\mathbb{I}(\ell)$ and $\mathbb{I}(-\ell)$ are defined as eigenvalues of a matrix $M\in SL_2(\mathbb{C})$, and $\Tr(M)=\mathbb{I}(\ell)+\mathbb{I}(-\ell)$. It follows that $\Tr_T^1([K])=\mathbb{I}^1(\ell)+\mathbb{I}^1(-\ell)$. Since $\Tr_T^\omega([K])$ has coefficients in $\mathbb{Z}_{\geq0}[\omega,\omega^{-1}]$, each of its terms survives in the classical limit, and hence the quantum trace $\Tr_T^\omega([K])$ is the sum of exactly two terms, whose classical limits are $\mathbb{I}^1(\ell)$ and $\mathbb{I}^1(-\ell)$. From Lemma~\ref{lem:BWstar}, we know that $\Tr_T^\omega([K])$ is $*$-invariant, and hence so are its highest and lowest terms. It follows from Lemma~\ref{lem:Weyllimit} that these terms are simply the Weyl orderings of $\mathbb{I}^1(\ell)$ and $\mathbb{I}^1(-\ell)$. The lemma now follows from the definition of $\mathbb{I}^\omega$ on a peripheral loop.
\end{proof}

\begin{lemma}
\label{lem:absorbperipheral}
If $\ell$ is a peripheral loop and $\ell'$ is any lamination, then 
\[
\mathbb{I}^\omega(\ell)\mathbb{I}^\omega(\ell')=\mathbb{I}^\omega(\ell+\ell').
\]
\end{lemma}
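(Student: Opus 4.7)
The plan is to unpack Definition~\ref{def:canonicalmap}, split into cases depending on whether $\ell'$ already contains a curve homotopic to $\ell$, and then reduce the only nontrivial case to a direct Weyl-ordering identity for peripheral curves.

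First, let $\ell_0$ denote the underlying weight-one peripheral curve, so $\ell = k\ell_0$ for some $k\in\mathbb{Z}$, and write $\ell' = \sum_i k_i \ell_i$ in the canonical form of Definition~\ref{def:canonicalmap}, with distinct homotopy classes $\ell_i$. In the easy case where no $\ell_i$ is homotopic to $\ell_0$, the sum $\ell + \ell' = k\ell_0 + \sum_i k_i \ell_i$ is itself in canonical form; applying Definition~\ref{def:canonicalmap} and using Lemmas~\ref{lem:peripheralcentral} and~\ref{lem:disjointcommute} to move $\mathbb{I}^\omega(k\ell_0) = \mathbb{I}^\omega(\ell)$ past the other factors yields the identity immediately.

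In the remaining case, relabel so that $\ell_1$ is homotopic to $\ell_0$. Then $\ell + \ell' = (k + k_1)\ell_1 + \sum_{i \geq 2} k_i \ell_i$ is the canonical form of $\ell + \ell'$, and after cancelling the factors indexed by $i \geq 2$ using the centrality of peripheral contributions (Lemma~\ref{lem:peripheralcentral}), the identity reduces to
\[
\mathbb{I}^\omega\bigl((k + k_1)\ell_1\bigr) = \mathbb{I}^\omega(k\ell_1)\cdot\mathbb{I}^\omega(k_1\ell_1).
\]
By Definition~\ref{def:peripheral}, each side here is a Weyl-ordered monomial $[Z_1^{2ma_1}\cdots Z_n^{2ma_n}]$ whose exponent vector is proportional to the coordinate vector $(a_1,\ldots,a_n)$ of $\ell_1$. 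A direct expansion using the commutation relations $Z_i Z_j = \omega^{2\varepsilon_{ij}} Z_j Z_i$ shows that for arbitrary exponent vectors $(r_i)$ and $(s_i)$ one has
\[
[Z_1^{r_1}\cdots Z_n^{r_n}]\cdot[Z_1^{s_1}\cdots Z_n^{s_n}] = \omega^{\sum_{i<j}\varepsilon_{ij}(r_i s_j - r_j s_i)}\,[Z_1^{r_1+s_1}\cdots Z_n^{r_n+s_n}],
\]
and proportionality of the exponent vectors in our setting forces each $r_i s_j - r_j s_i$ to vanish, so the two sides coincide.

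There is no serious obstacle: the argument is case analysis combined with the Weyl-ordering computation above, which is essentially bookkeeping. A still more concise route would be first to establish the auxiliary identity $\mathbb{I}^\omega(m\ell_1) = \mathbb{I}^\omega(\ell_1)^m$ (already used without proof inside Lemma~\ref{lem:peripheralcentral}) by the same Weyl-ordering calculation applied to $m$ identical factors, after which the displayed reduction is simply associativity of multiplication in $\mathcal{Z}_T^\omega$.
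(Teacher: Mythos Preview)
Your proof is correct and follows the same underlying idea as the paper's, but is considerably more careful. The paper's argument is two sentences: push $\ell$ into a small neighborhood of its puncture so that it is disjoint from all curves of $\ell'$, observe that the union then represents $\ell+\ell'$, and invoke Definition~\ref{def:canonicalmap} directly. The paper does not isolate the case in which $\ell'$ already contains a curve homotopic to $\ell_0$; it implicitly relies on the identity $\mathbb{I}^\omega(m\ell_0)=\mathbb{I}^\omega(\ell_0)^m$ for peripheral $\ell_0$, which, as you note, is used without proof in the arguments for Lemmas~\ref{lem:peripheralcentral} and the lemma preceding it. Your case split makes this step explicit, and your Weyl-ordering computation (the vanishing of $r_is_j-r_js_i$ for proportional exponent vectors) supplies the missing justification. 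The trade-off: the paper's proof is shorter and phrased geometrically, while yours is self-contained at the level of the algebra $\mathcal{Z}_T^\omega$ and does not leave the overlap case to the reader.
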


\begin{proof}
Since $\ell$ is peripheral, we can move it into a small region of a puncture disjoint from all curves of $\ell'$. Then the union of the curves of $\ell$ with the curves of $\ell'$ represents the lamination $\ell+\ell'$. By definition, $\mathbb{I}^\omega(\ell)\mathbb{I}^\omega(\ell')=\mathbb{I}^\omega(\ell+\ell')$.
\end{proof}

\begin{lemma}
\label{lem:easyproductexpansion}
Let $\ell$ and $\ell'$ be two laminations on~$S$, each represented by a single nonperipheral curve of weight~1, and let $k$ and~$k'$ be positive integers. Then 
\[
\mathbb{I}^\omega(\ell)^{k}\,\mathbb{I}^\omega(\ell')^{k'}=\sum_{\ell''\in\mathcal{A}_L(S,\mathbb{Z})}c^\omega(\ell,\ell',k,k';\ell'')\mathbb{I}^\omega(\ell'')
\]
where $c^\omega(\ell,\ell',k,k';\ell'')\in\mathbb{Z}[\omega^2,\omega^{-2}]$ and only finitely many terms are nonzero.
\end{lemma}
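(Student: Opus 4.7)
The plan is to reduce the statement to a skein computation, exploiting the fact that $\Tr_T^\omega$ is an algebra homomorphism. Since $\mathbb{I}^\omega(\ell) = \Tr_T^\omega([K_\ell])$ and $\mathbb{I}^\omega(\ell') = \Tr_T^\omega([K_{\ell'}])$ for framed links $K_\ell$, $K_{\ell'}$ with constant elevation and vertical framing projecting to $\ell$ and $\ell'$, the homomorphism property immediately gives
\[
\mathbb{I}^\omega(\ell)^k\,\mathbb{I}^\omega(\ell')^{k'} = \Tr_T^\omega\!\bigl([K_\ell]^k\,[K_{\ell'}]^{k'}\bigr).
\]
So it suffices to resolve the skein $[K_\ell]^k\,[K_{\ell'}]^{k'}$ inside $\mathcal{S}^A(S)$ and then apply $\Tr_T^\omega$.

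Next, I would repeatedly apply the skein relation $[K_1] = A^{-1}[K_0] + A[K_\infty]$ at each crossing of the projection (taking $A=\omega^{-2}$, so $A^{\pm 1}\in\mathbb{Z}[\omega^2,\omega^{-2}]$), and remove each contractible component at the cost of a factor $-A^2-A^{-2} = -\omega^{4}-\omega^{-4}\in\mathbb{Z}[\omega^2,\omega^{-2}]$. This produces a finite expansion
\[
[K_\ell]^k\,[K_{\ell'}]^{k'} \;=\; \sum_\alpha c_\alpha\,[L_\alpha], \qquad c_\alpha\in\mathbb{Z}[\omega^2,\omega^{-2}],
\]
where each $L_\alpha$ is a disjoint union of non-contractible simple closed curves, equipped with the vertical framing, with possibly several curves in a given homotopy class. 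For each $\alpha$, write $L_\alpha = \bigsqcup_j m_{\alpha,j}\,\beta_{\alpha,j}$ where the $\beta_{\alpha,j}$ are pairwise non-homotopic. Because disjoint framed links commute in $\mathcal{S}^A(S)$ and $\Tr_T^\omega$ is a homomorphism,
\[
\Tr_T^\omega([L_\alpha]) \;=\; \prod_j \mathbb{I}^\omega(\beta_{\alpha,j})^{m_{\alpha,j}}.
\]

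Now I would resolve each factor $\mathbb{I}^\omega(\beta_{\alpha,j})^{m_{\alpha,j}}$ into a combination of $\mathbb{I}^\omega(r\,\beta_{\alpha,j})$. When $\beta_{\alpha,j}$ is non-peripheral, Definition~\ref{def:multicurve} identifies $F_r(\mathbb{I}^\omega(\beta_{\alpha,j}))$ with $\mathbb{I}^\omega(r\,\beta_{\alpha,j})$, so Lemma~\ref{lem:inverseChebyshev} yields $\mathbb{I}^\omega(\beta_{\alpha,j})^{m_{\alpha,j}} = \sum_{r=0}^{m_{\alpha,j}} c_{m_{\alpha,j},r}\,\mathbb{I}^\omega(r\,\beta_{\alpha,j})$ with nonnegative integer coefficients. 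When $\beta_{\alpha,j}$ is peripheral, Lemma~\ref{lem:absorbperipheral} gives the simpler identity $\mathbb{I}^\omega(\beta_{\alpha,j})^{m_{\alpha,j}} = \mathbb{I}^\omega(m_{\alpha,j}\,\beta_{\alpha,j})$. In either case, expanding and using that the $\beta_{\alpha,j}$ are pairwise disjoint, Definition~\ref{def:canonicalmap} assembles the products into single canonical values:
\[
\prod_j \mathbb{I}^\omega(r_j\,\beta_{\alpha,j}) \;=\; \mathbb{I}^\omega\!\Bigl(\textstyle\sum_j r_j\,\beta_{\alpha,j}\Bigr).
\]
Substituting back into the skein expansion produces the required finite $\mathbb{Z}[\omega^2,\omega^{-2}]$-linear expression in the $\mathbb{I}^\omega(\ell'')$.

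The main obstacle I anticipate is bookkeeping rather than any single hard step: one must check that the skein-relation coefficients (in $\mathbb{Z}[\omega^2,\omega^{-2}]$), the removal-of-contractible-component factors $-\omega^4-\omega^{-4}$, and the integer inverse-Chebyshev coefficients compose cleanly to give coefficients in $\mathbb{Z}[\omega^2,\omega^{-2}]$, and that peripheral and non-peripheral components of a given $L_\alpha$ are treated consistently so that the final output is a genuine $\mathbb{I}^\omega$ of a lamination in $\mathcal{A}_L(S,\mathbb{Z})$. Finiteness follows automatically because the skein resolution terminates in finitely many terms and each $L_\alpha$ has finitely many components.
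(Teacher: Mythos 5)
Your overall strategy matches the paper's (pass to skeins, resolve crossings, reassemble via Chebyshev identities), but there is a genuine error at the step where you pass from the resolved links back to the $\mathbb{I}^\omega$'s. You write
\[
\Tr_T^\omega([L_\alpha]) \;=\; \prod_j \mathbb{I}^\omega(\beta_{\alpha,j})^{m_{\alpha,j}},
\]
but this identity is false when some $\beta_{\alpha,j}$ is peripheral. For a \emph{nonperipheral} curve $\beta$, Definition~\ref{def:nonperipheral} indeed sets $\mathbb{I}^\omega(\beta)=\Tr_T^\omega([\tilde\beta])$. But for a \emph{peripheral} curve $\beta$, Definition~\ref{def:peripheral} sets $\mathbb{I}^\omega(\beta)$ to be a single Weyl-ordered monomial (the highest term of the trace, per Lemma~\ref{lem:peripheralhighest}), while the full quantum trace of the corresponding link is a two-term expression, $\Tr_T^\omega([\tilde\beta]) = \mathbb{I}^\omega(\beta) + \mathbb{I}^\omega(-\beta)$ (Lemma~\ref{lem:traceperipheral}). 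So for a resolved link $L_\alpha$ with peripheral components, $\Tr_T^\omega([L_\alpha])$ is not a single product of $\mathbb{I}^\omega$'s: each peripheral factor contributes a \emph{sum} of two monomials, one of which carries a weight $-1$. Your subsequent use of Lemma~\ref{lem:absorbperipheral} treats only one of those two terms, so the expansion you produce is incomplete.

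The paper handles this exactly at the analogous step: after writing $\Tr_T^\omega([K_i]) = \prod_j \Tr_T^\omega([\tilde C_{i,j}])$, it invokes Lemma~\ref{lem:traceperipheral} to expand each peripheral factor into $\mathbb{I}^\omega(\beta)+\mathbb{I}^\omega(-\beta)$, obtaining a finite sum of products $\mathbb{I}^\omega(\ell_{t_1})\cdots\mathbb{I}^\omega(\ell_{t_{M_i}})$ in which each $\ell_{t_j}$ is a loop of weight $+1$ \emph{or} $-1$. Only then does it regroup equal homotopy classes, apply the inverse Chebyshev identity to the nonperipheral powers, and assemble the resulting disjoint, pairwise nonhomotopic pieces via Definition~\ref{def:canonicalmap}. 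To repair your proof, replace the displayed identity above with $\Tr_T^\omega([L_\alpha]) = \prod_j \Tr_T^\omega([\tilde\beta_{\alpha,j}])^{m_{\alpha,j}}$, expand the peripheral factors using Lemma~\ref{lem:traceperipheral} before collecting powers, and then proceed as you do. The rest of your bookkeeping (skein coefficients in $\mathbb{Z}[\omega^2,\omega^{-2}]$, removal of contractible components, finiteness) is fine.
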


\begin{proof}
Let $K$ and $K'$ be framed links with constant elevation and vertical framing that project to the curves $\ell$ and $\ell'$, respectively. Then each of the skeins $[K]$ and $[K']$ has a projected diagram with no crossings and no kinks. Therefore, so do the superimposed skeins $[K]^{k}$ and $[K']^{k'}$. The projected diagram for $[K]^{k}$ is a disjoint union of parallel copies of that of $[K]$, and likewise for~$[K']^{k'}$. Note that 
\begin{align*}
\mathbb{I}^\omega(\ell)^{k}\,\mathbb{I}^\omega(\ell')^{k'} &= \Tr_T^\omega([K])^{k} \Tr_T^\omega([K'])^{k'} \\
&= \Tr_T^\omega([K]^{k}[K']^{k'}) \\
&= \Tr_T^\omega([K^k{K'}^{k'}]).
\end{align*}
By applying the skein relations, we can write
\[
[K^k{K'}^{k'}]=\sum_{i=1}^N\omega^{2p_i}[K_i].
\]
where $N$ and $p_i$ are integers and each $K_i$ is a link equipped with the vertical framing at every point whose projection to $S$ has no crossings or kinks. Then 
\[
\Tr_T^\omega([K^k{K'}^{k'}])=\sum_{i=1}^N\omega^{2p_i}\Tr_T^\omega([K_i]).
\]

We now claim that each $\Tr_T^\omega([K_i])$ can be written as a finite linear combination of expressions $\mathbb{I}^\omega(\ell)$ with coefficients in $\mathbb{Z}[\omega^2,\omega^{-2}]$. Indeed, we may assume that $K_i$ has no contractible curves since any contractible curve equals $-(\omega^4+\omega^{-4})$ in the skein algebra. Then each $K_i$ projects to a collection of nonintersecting and noncontractible curves on~$S$. Write $C_{i,1},\dots,C_{i,M_i}$ for the components of this projection. Let $\tilde{C}_{i,1},\dots,\tilde{C}_{i,M_i}$ be framed links obtained by embedding these curves at constant elevations with vertical framing. Then the skeins $[\tilde{C}_{i,1}],\dots,[\tilde{C}_{i,M_i}]$ commute, and we have 
\[
[K_i]=[\tilde{C}_{i,1}]\dots[\tilde{C}_{i,M_i}].
\]
Since $\Tr_T^\omega$ is an algebra homomorphism, we have 
\[
\Tr_T^\omega([K_i])=\Tr_T^\omega([\tilde{C}_{i,1}])\dots\Tr_T^\omega([\tilde{C}_{i,M_i}]).
\]

By Lemma~\ref{lem:traceperipheral} and the definition of $\mathbb{I}^\omega$, the right hand side of this expression can be written as a finite sum of terms of the form 
\[
\mathbb{I}^\omega(\ell_{t_1})\mathbb{I}^\omega(\ell_{t_2})\dots\mathbb{I}^\omega(\ell_{t_{M_i}})
\]
where, for each $j$, \ $\ell_j$ denotes a lamination represented by a single loop of weight~1 or~$-1$, whose homotopy class coincides with the homotopy class of $C_{i,j}$. So, for any distinct indices $j$ and~$j'$, the laminations $\ell_{t_j}$ and $\ell_{t_{j'}}$ are either disjoint or represented by the same curve. Thus, in particular, all of the factors in this expression commute, and thus we can rewrite the expression as 
\[
\mathbb{I}^\omega(\ell_{s_1})^{R_1}\mathbb{I}^\omega(\ell_{s_2})^{R_2}\dots \mathbb{I}^\omega(\ell_{s_{m_i}})^{R_{m_i}}
\]
where $m_i$ is a positive integer, $R_1,\dots,R_{m_i}$ are positive integers, $\ell_{s_1},\dots,\ell_{s_{m_i}}$ are mutually disjoint and mutually nonhomotopic laminations, each represented by a single loop on~$S$.

Suppose $\ell_{s_j}$ is represented by a nonperipheral curve. Then by Lemma~\ref{lem:inverseChebyshev}, we have 
\[
\mathbb{I}^\omega(\ell_{s_j})^{R_j}=\sum_{b=0}^{R_j}c_{R_j,b}\, \mathbb{I}^\omega(b\ell_{s_j})
\]
where $c_{R_j,b}$ are nonnegative integers. Therefore, we can write the previous displayed expression as a finite $\mathbb{Z}_{\geq0}$-linear combination of terms of the form 
\[
\mathbb{I}^\omega(b_1\ell_{s_1})\mathbb{I}^\omega(b_2\ell_{s_2})\dots\mathbb{I}^\omega(b_{m_i}\ell_{m_i})
\]
for nonnegative integers $b_1,\dots,b_{m_i}$, and this equals 
\[
\mathbb{I}^\omega(b_1\ell_{s_1}+b_2\ell_{s_s}+\dots+b_{m_i}\ell_{s_{m_i}}).
\]
This completes the proof.
\end{proof}

\begin{lemma}
\label{lem:multipleproductexpansion}
Let $\ell$ and $\ell'$ be two laminations on~$S$, each represented by a single nonperipheral curve of weight~1, and let $k$ and $k'$ be positive integers. Then 
\[
\mathbb{I}^\omega(k\ell)\mathbb{I}^\omega(k'\ell')=\sum_{\ell''\in\mathcal{A}_L(S,\mathbb{Z})}c^\omega(k\ell,k'\ell';\ell'')\mathbb{I}^\omega(\ell'')
\]
where $c^\omega(k\ell,k'\ell';\ell'')\in\mathbb{Z}[\omega^2,\omega^{-2}]$ and only finitely many terms are nonzero.
\end{lemma}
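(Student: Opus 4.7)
The plan is to reduce this statement directly to Lemma~\ref{lem:easyproductexpansion}. By Definition~\ref{def:multicurve}, $\mathbb{I}^\omega(k\ell) = F_k(\mathbb{I}^\omega(\ell))$ and $\mathbb{I}^\omega(k'\ell') = F_{k'}(\mathbb{I}^\omega(\ell'))$, where $F_k$ and $F_{k'}$ are ordinary Chebyshev polynomials with integer coefficients. Hence
\[
\mathbb{I}^\omega(k\ell)\,\mathbb{I}^\omega(k'\ell') = F_k(\mathbb{I}^\omega(\ell))\,F_{k'}(\mathbb{I}^\omega(\ell'))
\]
is a finite $\mathbb{Z}$-linear combination of monomials of the form $\mathbb{I}^\omega(\ell)^{a}\mathbb{I}^\omega(\ell')^{b}$ with $0 \leq a \leq k$ and $0 \leq b \leq k'$ (together with a constant term coming from $F_k(0)F_{k'}(0)$, which simply contributes a multiple of $\mathbb{I}^\omega(\varnothing) = 1$).

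For each such monomial $\mathbb{I}^\omega(\ell)^{a}\mathbb{I}^\omega(\ell')^{b}$ with $a,b \geq 1$, Lemma~\ref{lem:easyproductexpansion} provides a finite expansion
\[
\mathbb{I}^\omega(\ell)^{a}\,\mathbb{I}^\omega(\ell')^{b} = \sum_{\ell'' \in \mathcal{A}_L(S,\mathbb{Z})} c^\omega(\ell,\ell',a,b;\ell'')\,\mathbb{I}^\omega(\ell'')
\]
with coefficients in $\mathbb{Z}[\omega^2,\omega^{-2}]$. The remaining pure powers $\mathbb{I}^\omega(\ell)^{a}$ (respectively $\mathbb{I}^\omega(\ell')^{b}$) can be handled by the inverse-Chebyshev identity of Lemma~\ref{lem:inverseChebyshev}, which expresses them as $\mathbb{Z}_{\geq 0}$-linear combinations of $\mathbb{I}^\omega(r\ell)$ (respectively $\mathbb{I}^\omega(r'\ell')$) for $0 \leq r \leq a$ (respectively $0 \leq r' \leq b$), and these are themselves of the form $\mathbb{I}^\omega(\ell'')$ for laminations $\ell''$.

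Collecting the finitely many resulting terms, adding the contributions over all $(a,b)$, and gathering coefficients according to the target lamination $\ell''$ yields a finite sum
\[
\mathbb{I}^\omega(k\ell)\,\mathbb{I}^\omega(k'\ell') = \sum_{\ell'' \in \mathcal{A}_L(S,\mathbb{Z})} c^\omega(k\ell,k'\ell';\ell'')\,\mathbb{I}^\omega(\ell'')
\]
with coefficients in $\mathbb{Z}[\omega^2,\omega^{-2}]$, as required. There is no serious obstacle: the lemma is essentially a formal consequence of the previous lemma together with the polynomial identities defining the quantum canonical map on multiples of a curve. The only points requiring care are that (i) the coefficient ring $\mathbb{Z}[\omega^2,\omega^{-2}]$ is preserved throughout (which is clear, since Lemma~\ref{lem:easyproductexpansion} already delivers coefficients in this ring and the Chebyshev polynomials $F_k$ have integer coefficients), and (ii) finiteness of the final sum, which follows because each step in the reduction produces only finitely many terms.
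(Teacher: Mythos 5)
Your proof is correct and follows essentially the same route as the paper: expand $F_k$ and $F_{k'}$ term-by-term, apply Lemma~\ref{lem:easyproductexpansion} to the mixed monomials $\mathbb{I}^\omega(\ell)^{a}\mathbb{I}^\omega(\ell')^{b}$ with $a,b\geq 1$, and handle the pure-power and constant terms via the inverse Chebyshev identity of Lemma~\ref{lem:inverseChebyshev}. The only cosmetic difference is that the paper phrases the boundary cases as ``$b=0$ or $b'=0$'' rather than separating out the constant term, but the substance is identical.
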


\begin{proof}
For convenience, let us give names to the coefficients of the Chebyshev polynomials:
\[
F_k(t)=\sum_{b=0}^kd_{k,b}t^b\in\mathbb{Z}[t].
\]
Observe that 
\begin{align*}
\mathbb{I}^\omega(k\ell)\mathbb{I}^\omega(k'\ell') &= F_k(\mathbb{I}^\omega(\ell))F_{k'}(\mathbb{I}^\omega(\ell')) \\
&= \left(\sum_{b=0}^kd_{k,b}\mathbb{I}^\omega(\ell)^b\right) \left(\sum_{b'=0}^{k'}d_{k',b'}\mathbb{I}^\omega(\ell')^{b'}\right) \\
&= \sum_{b=0}^k\sum_{b'=0}^{k'}d_{k,b}d_{k',b'}\mathbb{I}^\omega(\ell)^{b} \mathbb{I}^\omega(\ell')^{b'}.
\end{align*}
It now suffices to show that we can express $\mathbb{I}^\omega(\ell)^{b} \mathbb{I}^\omega(\ell')^{b'}$ as a linear combination of expressions $\mathbb{I}^\omega(\ell'')$ with coefficients in~$\mathbb{Z}[\omega^2,\omega^{-2}]$. When $b=0$ or~$b'=0$, we can do this using the inverse Chebyshev polynomials. When both $b$ and $b'$ are positive integers, the claim follows from~Lemma~\ref{lem:easyproductexpansion}.
\end{proof}

\begin{proposition}
\label{prop:productexpansion}
If $\ell$ and $\ell'$ are any laminations on~$S$, then 
\[
\mathbb{I}^\omega(\ell)\mathbb{I}^\omega(\ell')=\sum_{\ell''\in\mathcal{A}_L(S,\mathbb{Z})}c^\omega(\ell,\ell';\ell'')\mathbb{I}^\omega(\ell'')
\]
where $c^\omega(\ell,\ell';\ell'')\in\mathbb{Z}[\omega^2,\omega^{-2}]$ and only finitely many terms are nonzero.
\end{proposition}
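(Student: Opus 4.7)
The plan is to reduce the general statement to Lemma~\ref{lem:multipleproductexpansion} by isolating peripheral curves and then expanding the nonperipheral parts as polynomials in traces of single-curve skeins. Decompose $\ell = \alpha + \beta$ and $\ell' = \alpha' + \beta'$, where $\alpha,\alpha'$ collect the peripheral loops and $\beta,\beta'$ the nonperipheral ones. By Lemma~\ref{lem:peripheralcentral}, $\mathbb{I}^\omega(\alpha)$ and $\mathbb{I}^\omega(\alpha')$ are central in $\mathcal{Z}_T^\omega$, so
\[
\mathbb{I}^\omega(\ell)\mathbb{I}^\omega(\ell') \;=\; \mathbb{I}^\omega(\alpha)\mathbb{I}^\omega(\alpha')\cdot\mathbb{I}^\omega(\beta)\mathbb{I}^\omega(\beta').
\]
Iterated application of Lemma~\ref{lem:absorbperipheral} collapses $\mathbb{I}^\omega(\alpha)\mathbb{I}^\omega(\alpha')$ into $\mathbb{I}^\omega(\alpha+\alpha')$; a final application of the same lemma at the end will absorb this peripheral shift into each $\mathbb{I}^\omega(\ell'')$ in the final expansion. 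Thus it suffices to establish the expansion for $\mathbb{I}^\omega(\beta)\mathbb{I}^\omega(\beta')$.

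Writing $\beta = \sum_i k_i\ell_i$ and $\beta' = \sum_j k_j'\ell_j'$ as sums of distinct weight-one nonperipheral curves with positive integer coefficients, Definition~\ref{def:multicurve} gives
\[
\mathbb{I}^\omega(\beta) \;=\; \prod_i F_{k_i}\!\bigl(\mathbb{I}^\omega(\ell_i)\bigr), \qquad \mathbb{I}^\omega(\beta') \;=\; \prod_j F_{k_j'}\!\bigl(\mathbb{I}^\omega(\ell_j')\bigr),
\]
where the factors inside each product commute by Lemma~\ref{lem:disjointcommute}. Expanding the Chebyshev polynomials, $\mathbb{I}^\omega(\beta)\mathbb{I}^\omega(\beta')$ becomes a finite $\mathbb{Z}$-linear combination of noncommutative monomials $\prod_i \mathbb{I}^\omega(\ell_i)^{a_i}\cdot\prod_j \mathbb{I}^\omega(\ell_j')^{b_j}$ with nonnegative integer exponents. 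It therefore remains to show that every such monomial can itself be rewritten as a finite $\mathbb{Z}[\omega^2,\omega^{-2}]$-linear combination of elements $\mathbb{I}^\omega(\ell'')$.

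This last claim is a direct extension of Lemma~\ref{lem:easyproductexpansion} from two single-curve laminations to an arbitrary finite family, and its proof follows the same skein-algebraic blueprint. Represent each $\mathbb{I}^\omega(\ell_i) = \Tr_T^\omega([K_i])$ and $\mathbb{I}^\omega(\ell_j') = \Tr_T^\omega([K_j'])$ by a constant-elevation vertically framed link; the monomial then equals $\Tr_T^\omega\bigl(\prod_i[K_i]^{a_i}\cdot\prod_j[K_j']^{b_j}\bigr)$. Applying the skein relations to resolve all crossings of the superposition expresses the bracket as $\sum_r \omega^{2p_r}[L_r]$ with each $L_r$ crossing-free, vertically framed, and projecting to a disjoint union of simple noncontractible closed curves on~$S$ (contractible loops are eliminated at the cost of the factor $-(\omega^4+\omega^{-4}) \in \mathbb{Z}[\omega^2,\omega^{-2}]$). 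The quantum trace then factors as a product of traces of the individual components; peripheral components contribute $\mathbb{I}^\omega(\gamma)+\mathbb{I}^\omega(-\gamma)$ by Lemma~\ref{lem:traceperipheral}, while nonperipheral components contribute $\mathbb{I}^\omega(\gamma)$ by Definition~\ref{def:nonperipheral}. Collecting identical factors into powers $\mathbb{I}^\omega(\gamma)^R$ using Lemma~\ref{lem:disjointcommute}, converting each such power via the inverse Chebyshev identity of Lemma~\ref{lem:inverseChebyshev} into a nonnegative integer combination of $\mathbb{I}^\omega(b\gamma)$'s, and finally invoking Definition~\ref{def:canonicalmap} to recombine the disjoint products into single $\mathbb{I}^\omega(\ell'')$'s yields the desired expansion.

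The main obstacle is the generalization of Lemma~\ref{lem:easyproductexpansion} to many factors carried out in the last paragraph. The conceptual argument mimics the two-factor case, but one must carefully track the ordering of the many skein products, the state sums and crossing resolutions arising from the superposition, and the multiplicities of homotopic components so as to apply Lemma~\ref{lem:inverseChebyshev} correctly. Once this bookkeeping is complete, the peripheral reduction and Chebyshev expansion described in the first two paragraphs reassemble the pieces into the stated expansion for $\mathbb{I}^\omega(\ell)\mathbb{I}^\omega(\ell')$.
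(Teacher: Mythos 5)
Your argument is correct, and it in fact supplies a step that the paper's own one-line proof glosses over. The paper writes $\mathbb{I}^\omega(\ell)=\prod_i\mathbb{I}^\omega(k_i\ell_i)$ and $\mathbb{I}^\omega(\ell')=\prod_i\mathbb{I}^\omega(k_i'\ell_i')$ and then asserts that the proposition ``follows from repeated applications of Lemmas~\ref{lem:multipleproductexpansion} and~\ref{lem:absorbperipheral}.'' A literal iteration of Lemma~\ref{lem:multipleproductexpansion} hits a snag: after one application, the laminations $\ell''$ appearing in the resulting expansion are no longer single curves, so the lemma's hypotheses are not met for the next multiplication. What is really needed is a multi-factor analogue of the skein-theoretic argument in Lemma~\ref{lem:easyproductexpansion}, and that is exactly what your third paragraph carries out: since the curves within $\beta$ are pairwise disjoint (and likewise within $\beta'$), all crossings in the superimposed link occur between the two families just as in the two-curve case, so the crossing resolution, elimination of contractible loops, factorization of the trace over the resulting disjoint simple curves, and application of Lemma~\ref{lem:traceperipheral} and the inverse Chebyshev identity of Lemma~\ref{lem:inverseChebyshev} all go through essentially verbatim. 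One small point worth making explicit: the inverse Chebyshev substitution applies only to the nonperipheral components; for a peripheral curve $\gamma$ the binomial $(\mathbb{I}^\omega(\gamma)+\mathbb{I}^\omega(-\gamma))^R$ should be expanded directly, using $\mathbb{I}^\omega(\gamma)^a\mathbb{I}^\omega(-\gamma)^b=\mathbb{I}^\omega((a-b)\gamma)$, rather than via Lemma~\ref{lem:inverseChebyshev}. Your upfront isolation of the peripheral parts $\alpha,\alpha'$ using centrality (Lemma~\ref{lem:peripheralcentral}) and Lemma~\ref{lem:absorbperipheral} is harmless extra bookkeeping and fully compatible with the paper's route.
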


\begin{proof}
Write 
\begin{align*}
\ell &= \sum_ik_i\ell_i, \\
\ell' &= \sum_ik_i'\ell_i'
\end{align*}
where $\ell_i$ (respectively, $\ell_i'$) are the curves of $\ell$ (respectively, $\ell'$) with each homotopy class of curves appearing at most once in the sum and $k_i$,~$k_i'\in\mathbb{Z}$. Then 
\begin{align*}
\mathbb{I}^\omega(\ell) &= \prod_i\mathbb{I}^\omega(k_i\ell_i), \\
\mathbb{I}^\omega(\ell') &= \prod_i\mathbb{I}^\omega(k_i'\ell_i'),
\end{align*}
and the proposition follows from repeated applications of Lemmas~\ref{lem:multipleproductexpansion} and~\ref{lem:absorbperipheral}.
\end{proof}

\section{Main result}
\label{sec:MainResult}

\subsection{The notion of parity}

We have now proved most of the statements that we need to establish our main result, but it remains to show that the map $\mathbb{I}^\omega:\mathcal{A}_L(S,\mathbb{Z})\rightarrow\mathcal{Z}_T^\omega$ gives rise to a map $\widehat{\mathbb{I}}^q:\mathcal{A}_{SL_2,S}(\mathbb{Z}^t)\rightarrow\mathcal{X}_T^q$. The proof of this will require a digression.

\begin{definition}
An ordered collection $\mathcal{X}$ of elements $x_1,x_2,\dots,x_n$ in an algebra over~$\mathbb{C}$ is said to be \emph{$\omega^2$-commuting} if there is a skew-symmetric integer matrix $\varepsilon_{ij}$ such that
\[
x_i x_j = \omega^{2\varepsilon_{ij}} x_j x_i, \qquad \mbox{for all $i,j \in \{1,\dots,n\}$. }
\]
In this case, a single-term Laurent polynomial in $x_1,\dots,x_n$ with coefficient of the form $\omega^N$ for some integer $N$ is called a \emph{Laurent $\omega$-monomial} in $x_1,\dots,x_n$, or in $\mathcal{X}$. When a Laurent $\omega$-monomial in $x_1,\dots,x_n$ is written as
\[
\omega^N x_1^{p_1} x_2^{p_2} \cdots x_n^{p_n}
\]
for some integers $N,p_1,p_2,\dots,p_n$, we say that it is written in \emph{standard form}.
\end{definition}

Note that any Laurent $\omega$-monomial in $x_1,\dots,x_n$ can be transformed into a unique standard form using the commutation relations.

\begin{definition}
\label{def:parity_compatibility}
Let $\mathcal{X}$ be an ordered collection $x_1,\dots,x_n$ of $\omega^2$-commuting variables. Two Laurent $\omega$-monomials in $\mathcal{X}$, whose standard forms are $\omega^N x_1^{p_1} \cdots x_n^{p_n}$ and $\omega^{N'} x_1^{p_1'} \cdots x_n^{p_n'}$, are called \emph{parity compatible} or \emph{$\mathcal{X}$-parity compatible}, when:
\[
p_i \equiv p_i' \quad({\rm mod}~2) \quad\mbox{for all $i=1,\dots,n$}, \qquad \mbox{and} \quad N\equiv N' \quad({\rm mod}~4).
\]
Parity compatibility defines an equivalence relation, indicated by $\sim$, on the set of Laurent $\omega$-monomials in $\mathcal{X}$.
\end{definition}

\begin{lemma}
\label{lem:squares_are_absorbed}
For any Laurent $\omega$-monomial in $\mathcal{X}$, say $\omega^N x_{i_1} \cdots x_{i_m}$, and for any $i \in \{1,\dots,n\}$, one has
\[
x_i^2 \, (\omega^N x_{i_1} \cdots x_{i_m}) \sim (\omega^N x_{i_1} \cdots x_{i_m}) \sim (\omega^N x_{i_1} \cdots x_{i_m}) \, x_i^2.
\]
\end{lemma}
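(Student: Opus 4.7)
My plan is as follows. Let $M = \omega^N x_{i_1}\cdots x_{i_m}$ be the given Laurent $\omega$-monomial. Using the commutation relations $x_j x_k = \omega^{2\varepsilon_{jk}} x_k x_j$ repeatedly, I can rewrite $M$ in its (unique) standard form
\[
M = \omega^{N'} x_1^{p_1} x_2^{p_2} \cdots x_n^{p_n}.
\]
It will suffice to show that both $x_i^2 M$ and $M x_i^2$, when written in standard form, have exponent of $x_j$ equal to $p_j$ for $j \neq i$, exponent of $x_i$ equal to $p_i + 2$, and $\omega$-exponent congruent to $N'$ modulo $4$.

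The key observation is the following: because $x_i x_j = \omega^{2\varepsilon_{ij}} x_j x_i$, raising to a square doubles the commutation factor, so
\[
x_i^2 \, x_j^{p_j} = \omega^{4\varepsilon_{ij} p_j}\, x_j^{p_j}\, x_i^2, \qquad x_j^{p_j}\, x_i^2 = \omega^{-4\varepsilon_{ij} p_j}\, x_i^2\, x_j^{p_j},
\]
and the resulting $\omega$-exponents are always multiples of $4$. Thus for left multiplication,
\[
x_i^2 M = \omega^{N'}\, x_i^2\, x_1^{p_1}\cdots x_n^{p_n} = \omega^{N' + 4\sum_{j<i} \varepsilon_{ij} p_j}\, x_1^{p_1}\cdots x_{i-1}^{p_{i-1}}\, x_i^{p_i+2}\, x_{i+1}^{p_{i+1}}\cdots x_n^{p_n},
\]
which is in standard form; the exponents of the $x_j$ agree with those of $M$ modulo $2$ (trivially for $j \neq i$, and because $p_i + 2 \equiv p_i \pmod{2}$ for $j = i$), and the $\omega$-exponent differs from $N'$ by $4\sum_{j<i} \varepsilon_{ij} p_j \equiv 0 \pmod{4}$. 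The analogous computation for $M x_i^2$ gives the $\omega$-exponent $N' - 4\sum_{j>i} \varepsilon_{ij} p_j$, again congruent to $N'$ modulo $4$.

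There is essentially no obstacle here — the lemma is a direct verification. The only thing to watch is the factor of $4$ (not just $2$) appearing in the exponent shift; this is precisely why the mod-$4$ condition in Definition~\ref{def:parity_compatibility} was chosen, and it is exactly what makes the conclusion work. With both computations in hand, parity compatibility in both directions follows immediately from the definition.
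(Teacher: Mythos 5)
Your proof is correct and takes essentially the same route as the paper's: write the monomial in standard form, observe that $x_i^2 x_j = \omega^{4\varepsilon_{ij}} x_j x_i^2$, and conclude that commuting $x_i^2$ past the other variables only shifts the $\omega$-exponent by a multiple of $4$. The only difference is that you compute the resulting $\omega$-exponents explicitly (as $N' + 4\sum_{j<i}\varepsilon_{ij}p_j$ and $N' - 4\sum_{j>i}\varepsilon_{ij}p_j$), whereas the paper simply writes them as $N' + 4N''$ and $N' + 4N'''$ for some integers $N'', N'''$.
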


\begin{proof}
Write $\omega^N x_{i_1} \cdots x_{i_m}$ in its standard form, say $\omega^{N'} x_1^{p_1} \cdots x_n^{p_n}$.  Note that $x_i^2 x_j = \omega^{4\varepsilon_{ij}} x_j x_i^2$, for each $j$. Thus
\begin{align*}
x_i^2 \, (\omega^{N'} x_1^{p_1} \cdots x_n^{p_n}) & = \omega^{N' + 4N''} \, x_1^{p_1} \cdots x_i^{p_i+2} \cdots x_n^{p_n}, \\
 (\omega^{N'} x_1^{p_1} \cdots x_n^{p_n})\, x_i^2 & = \omega^{N' + 4N'''} \, x_1^{p_1} \cdots x_i^{p_i+2} \cdots x_n^{p_n}
\end{align*}
for some integers $N'',N'''$. Definition~\ref{def:parity_compatibility} says that both of these are parity compatible with $(\omega^{N'} x_1^{p_1} \cdots x_n^{p_n})$, hence with $\omega^N x_{i_1} \cdots x_{i_m}$.
\end{proof}

\begin{lemma}
\label{lem:multiplication_of_a_common_factor_preserves_parity_compatibility}
Let $f_1$, $f_2$, $f_3$, $f_4$ be Laurent $\omega$-monomials in $\mathcal{X}$. Then
\[
f_1 \sim f_2 ~ \Rightarrow ~ f_3 \, f_1 \, f_4 \sim f_3 \, f_2 \, f_4.
\]
\end{lemma}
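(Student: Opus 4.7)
The strategy is to break the lemma into two one-sided statements and combine them by transitivity of $\sim$: I will show that $f_1 \sim f_2$ implies both $f_3 f_1 \sim f_3 f_2$ and $f_1 f_4 \sim f_2 f_4$. Then applying the second with $f_1, f_2$ and the first with $f_1 f_4, f_2 f_4$ in place of $f_1, f_2$ yields $f_3 f_1 f_4 \sim f_3 f_2 f_4$.

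The computational input is a closed-form expression for the standard form of a product. Repeated use of $x_i x_j = \omega^{2\varepsilon_{ij}} x_j x_i$ shows that if $f_3 = \omega^A x_1^{a_1}\cdots x_n^{a_n}$ and $f_1 = \omega^N x_1^{p_1}\cdots x_n^{p_n}$ are already in standard form, then
\[
f_3 f_1 = \omega^{A+N+2\phi(a,p)}\, x_1^{a_1+p_1}\cdots x_n^{a_n+p_n}, \qquad \phi(a,p) := \sum_{j>k} \varepsilon_{jk}\, a_j p_k,
\]
and analogously for $f_3 f_2$ with $(N,p)$ replaced by $(N',p')$. One derives this by sliding the factor $x_k^{p_k}$ leftward past $x_n^{a_n}\cdots x_{k+1}^{a_{k+1}}$ for $k = 1,\dots,n$, which produces precisely the prefactor $\omega^{2\sum_{j>k} \varepsilon_{jk} a_j p_k}$.

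To deduce $f_3 f_1 \sim f_3 f_2$, I compare the two standard forms coordinate by coordinate. The $x_i$-exponents differ by $p_i - p_i'$, which is even by hypothesis. The $\omega$-exponents differ by $(N - N') + 2\phi(a, p - p')$; by hypothesis $N - N' \equiv 0 \pmod 4$, and since each $p_k - p_k'$ is even, the integer $\phi(a, p-p') = \sum_{j>k} \varepsilon_{jk}\, a_j (p_k - p_k')$ is itself even, so $2\phi(a, p - p')$ is divisible by $4$. This gives the parity compatibility in Definition~\ref{def:parity_compatibility}. The proof of $f_1 f_4 \sim f_2 f_4$ is completely symmetric: if $f_4 = \omega^B x_1^{b_1}\cdots x_n^{b_n}$, then one checks $f_1 f_4$ has standard form $\omega^{N + B + 2\phi(p, b)} x_1^{p_1+b_1}\cdots x_n^{p_n+b_n}$, and the same parity analysis applied to $\phi(p - p', b)$ finishes the argument.

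There is no conceptual obstacle here; the only care required is bookkeeping with the antisymmetry $\varepsilon_{jk} = -\varepsilon_{kj}$ when deriving the product formula. Once that formula is in place, the mod-$2$ analysis of the $x_i$-exponents and the mod-$4$ analysis of the $\omega$-exponent are immediate.
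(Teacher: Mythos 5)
Your proof is correct and is essentially the paper's argument: both reduce to one-sided multiplication and check the mod-$2$ and mod-$4$ conditions of Definition~\ref{def:parity_compatibility} after reorganizing the product into standard form. The only difference is packaging — you derive the closed-form exponent $\phi(a,p)=\sum_{j>k}\varepsilon_{jk}a_jp_k$ for multiplication by an arbitrary Laurent $\omega$-monomial and analyze it directly, whereas the paper verifies only the single-generator case $f_4=x_j$ (and its mirror) and then invokes repeated application; both are sound.
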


\begin{proof}
Write $f_1$ and $f_2$ in standard forms:
\[
f_1 =\omega^N x_1^{p_1} \cdots x_n^{p_n}, \qquad
f_2 =\omega^{N'} x_1^{p_1'} \cdots x_n^{p_n'},
\]
so we have $p_i \equiv p_i'$ (mod $2$) for all $i=1,\dots,n$ and $N\equiv N'$ (mod $4$). Let us prove the lemma only for the case $f_3=1$ and $f_4 = x_j$ for some $j$. Note that $x_i^{p_i} x_j = \omega^{2\varepsilon_{ij} \, p_i} x_j x_i^{p_i}$, so
\begin{align*}
f_3 \, f_1 \, f_4 & = \omega^N x_1^{p_1} \cdots x_n^{p_n} x_j
= \omega^{N + 2\sum_{i>j} \varepsilon_{ij} p_i } \, x_1^{p_1} \cdots x_j^{p_j+1} \cdots x_n^{p_n} \\
f_3 \, f_2 \, f_4 & = \omega^{N'} x_1^{p_1'} \cdots x_n^{p_n'} x_j = \omega^{N' + 2\sum_{i>j} \varepsilon_{ij} p_i' } \, x_1^{p_1'} \cdots x_j^{p_j'+1} \cdots x_n^{p_n'}.
\end{align*}
Note that
\[
\left(N + 2\sum_{i>j}\varepsilon_{ij}p_i\right) -\left(N' + 2\sum_{i>j}\varepsilon_{ij}p_i'\right)
= (N-N') + \sum_{i>j} \varepsilon_{ij} \, 2(p_i - p_i') \equiv 0 \quad({\rm mod}~4)
\]
because $N-N' \equiv 0$ (mod $4$) and $p_i-p_i' \equiv 0$ (mod $2$). So the lemma holds in the case when $f_3 = 1$ and $f_4 = x_j$. A similar proof works for the case when $f_3=x_j$ and $f_4=1$. For general $f_3$ and $f_4$, one can apply these simple cases repeatedly.
\end{proof}

\begin{corollary}
\label{cor:multiplicativity_of_parity_compatibility}
Let $f_1$, $f_2$, $f_3$, $f_4$ be Laurent $\omega$-monomials in $\mathcal{X}$. Then
\[
f_1 \sim f_2 \mbox{ and } f_3 \sim f_4 ~ \Rightarrow ~ f_1f_3 \sim f_2f_4.
\]
\end{corollary}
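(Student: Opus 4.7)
The plan is to deduce the corollary as a two-step application of Lemma~\ref{lem:multiplication_of_a_common_factor_preserves_parity_compatibility}, combined with the transitivity of the equivalence relation~$\sim$ on Laurent $\omega$-monomials in~$\mathcal{X}$.

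First, I would observe that the constant $1$ is itself a Laurent $\omega$-monomial in~$\mathcal{X}$ (taking $N=0$ and all exponents $p_i=0$ in the standard form), so Lemma~\ref{lem:multiplication_of_a_common_factor_preserves_parity_compatibility} may be freely applied with either of its outer factors set to~$1$. Starting from the hypothesis $f_1\sim f_2$, I would apply Lemma~\ref{lem:multiplication_of_a_common_factor_preserves_parity_compatibility} with outer factors $1$ and~$f_3$ to conclude
\[
f_1 f_3 \sim f_2 f_3.
\]
Next, from the hypothesis $f_3\sim f_4$, I would apply Lemma~\ref{lem:multiplication_of_a_common_factor_preserves_parity_compatibility} again, this time with outer factors $f_2$ and~$1$, to obtain
\[
f_2 f_3 \sim f_2 f_4.
\]
Finally, since parity compatibility is an equivalence relation (as remarked after Definition~\ref{def:parity_compatibility}), transitivity yields $f_1 f_3 \sim f_2 f_4$.

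There is essentially no obstacle here: the corollary is a formal consequence of Lemma~\ref{lem:multiplication_of_a_common_factor_preserves_parity_compatibility} and the transitivity of~$\sim$. The only very minor point to verify is that the products $f_1f_3$, $f_2f_3$, $f_2f_4$ really are Laurent $\omega$-monomials in~$\mathcal{X}$ (so that it makes sense to compare them via~$\sim$), but this is immediate since the product of two monomials of the form $\omega^N x_1^{p_1}\cdots x_n^{p_n}$ can be put into standard form using the commutation relations $x_ix_j=\omega^{2\varepsilon_{ij}}x_jx_i$.
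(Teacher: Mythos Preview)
Your proof is correct and follows essentially the same approach as the paper: apply Lemma~\ref{lem:multiplication_of_a_common_factor_preserves_parity_compatibility} twice (once on the right with $f_3$, once on the left with $f_2$) and then use transitivity of~$\sim$. Your additional remarks that $1$ is a Laurent $\omega$-monomial and that products of Laurent $\omega$-monomials are again Laurent $\omega$-monomials are fine clarifications but not strictly needed beyond what the paper assumes.
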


\begin{proof}
One notes that $f_1 \sim f_2$ implies $f_1 f_3 \sim f_2 f_3$ and $f_3 \sim f_4$ implies $f_2f_3 \sim f_2 f_4$, from Lemma~\ref{lem:multiplication_of_a_common_factor_preserves_parity_compatibility}. The result then follows from the transitivity of $\sim$.
\end{proof}

\begin{definition}
\label{def:parity_of_Laurent_omega-monomials}
Let $Q\in \mathbb{Z}/4\mathbb{Z}$ and ${\bf q} = (q_1,\dots,q_n) \in (\mathbb{Z}/2\mathbb{Z})^n$. A Laurent $\omega$-monomial in~$\mathcal{X}$, say $\omega^N x_1^{p_1} \cdots x_n^{p_n}$ in its standard form, is said to be of \emph{parity $(Q, {\bf q})$} if $p_i \equiv q_i$ (mod $2$) for all $i=1,\dots,n$ and $N \equiv Q$ (mod $4$).
\end{definition}

We note the following simple facts about parity:

\begin{lemma}
\label{lem:uniqueness_of_parity}
A Laurent $\omega$-monomial in $\mathcal{X}$ is of parity $(Q,{\bf q})$ for unique $Q \in \mathbb{Z}/4\mathbb{Z}$ and ${\bf q} \in (\mathbb{Z}/2\mathbb{Z})^n$. \qed
\end{lemma}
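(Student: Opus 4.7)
The plan is to reduce the lemma to the uniqueness of the standard form, a fact noted immediately after that notion was introduced. Indeed, Definition~\ref{def:parity_of_Laurent_omega-monomials} defines parity in terms of a standard form of the monomial, so once one knows the standard form is unique, the parity datum is automatically a well-defined function of the monomial, and both existence and uniqueness of $(Q,{\bf q})$ follow from essentially the same step.

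In more detail, I would first use the commutation relations $x_i x_j = \omega^{2\varepsilon_{ij}} x_j x_i$ to rewrite the given Laurent $\omega$-monomial $f$ in its unique standard form $f = \omega^N x_1^{p_1} \cdots x_n^{p_n}$. Setting $Q$ to be the image of $N$ in $\mathbb{Z}/4\mathbb{Z}$ and $q_i$ the image of $p_i$ in $\mathbb{Z}/2\mathbb{Z}$ for each $i$ then witnesses existence: $f$ is of parity $(Q,{\bf q})$ by Definition~\ref{def:parity_of_Laurent_omega-monomials}. For uniqueness, suppose $f$ is also of parity $(Q',{\bf q}')$. By the definition, this must be verified against some standard form of $f$; but the standard form is unique, so the same expression $\omega^N x_1^{p_1}\cdots x_n^{p_n}$ serves. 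Hence $Q' \equiv N \equiv Q \pmod 4$ and $q_i' \equiv p_i \equiv q_i \pmod 2$ for all $i$, giving $(Q',{\bf q}') = (Q,{\bf q})$.

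The only non-cosmetic ingredient is the uniqueness of the standard form, which the paper takes as given. That uniqueness rests on the linear independence of the standard-form monomials in the ambient algebra (the Chekhov-Fock algebra $\mathcal{Z}_T^\omega$ has the structure of a quantum torus over $\mathbb{C}[\omega,\omega^{-1}]$, in which the monomials $\omega^N Z_1^{p_1} \cdots Z_n^{p_n}$ with distinct $(N, p_1, \dots, p_n)$ are linearly independent). Beyond appealing to this, there is no real obstacle: the lemma is a pure unwinding of definitions.
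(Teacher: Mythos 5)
Your proof is correct and matches the paper's (implicit) approach: the paper marks this lemma \qed\ without further argument because it follows immediately from the uniqueness of the standard form, which is noted right after that notion is introduced, and your write-up simply makes that unwinding explicit.
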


\begin{lemma}
\label{lem:parity_compatibility_means_same_parity}
Two Laurent $\omega$-monomials in $\mathcal{X}$ are parity compatible if and only if both of them are of parity $(Q,{\bf q})$ for some $Q\in \mathbb{Z}/4\mathbb{Z}$ and ${\bf q} \in (\mathbb{Z}/2\mathbb{Z})^n$. \qed
\end{lemma}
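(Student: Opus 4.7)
The plan is to observe that this lemma is a direct unwinding of Definitions~\ref{def:parity_compatibility} and~\ref{def:parity_of_Laurent_omega-monomials}, and simply verify the two directions of the equivalence by applying them to the standard forms.

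First, I would fix two Laurent $\omega$-monomials and write them in their unique standard forms, say
\[
f = \omega^N x_1^{p_1} \cdots x_n^{p_n}, \qquad f' = \omega^{N'} x_1^{p_1'} \cdots x_n^{p_n'}.
\]
For the forward implication, assume $f \sim f'$. By Definition~\ref{def:parity_compatibility}, this means exactly that $N \equiv N' \pmod 4$ and $p_i \equiv p_i' \pmod 2$ for every $i$. Setting $Q := N \bmod 4 \in \mathbb{Z}/4\mathbb{Z}$ and $\mathbf{q} := (p_1 \bmod 2, \ldots, p_n \bmod 2) \in (\mathbb{Z}/2\mathbb{Z})^n$, Definition~\ref{def:parity_of_Laurent_omega-monomials} says that $f$ is of parity $(Q, \mathbf{q})$; and the same congruences then give that $f'$ is also of parity $(Q, \mathbf{q})$.

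For the reverse implication, suppose both $f$ and $f'$ are of parity $(Q, \mathbf{q})$ for some common pair $(Q, \mathbf{q})$. Then by Definition~\ref{def:parity_of_Laurent_omega-monomials}, we have $N \equiv Q \equiv N' \pmod 4$ and $p_i \equiv q_i \equiv p_i' \pmod 2$ for each $i$. These are exactly the conditions appearing in Definition~\ref{def:parity_compatibility}, so $f \sim f'$.

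There is no real obstacle here; the only subtlety worth explicitly noting is that the reductions $N \bmod 4$ and $p_i \bmod 2$ are well defined, which is where Lemma~\ref{lem:uniqueness_of_parity} (the uniqueness of $(Q, \mathbf{q})$) enters, guaranteeing that the parity assigned to a monomial does not depend on any incidental choices in its expression. Given this, the whole argument is two lines of bookkeeping between the two definitions.
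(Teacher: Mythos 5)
Your proof is correct and matches the paper's implicit reasoning: the paper declares this lemma with no written proof (the \texttt{\textbackslash qed} appears immediately after the statement), treating it as a direct unwinding of Definitions~\ref{def:parity_compatibility} and~\ref{def:parity_of_Laurent_omega-monomials}, which is exactly what you do. One small note: the well-definedness you invoke really rests on the uniqueness of the standard form (stated just after the definition of Laurent $\omega$-monomials), which is also what underlies Lemma~\ref{lem:uniqueness_of_parity}; citing the latter here is harmless but slightly circular in spirit, since both lemmas are immediate consequences of that same uniqueness fact.
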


Later we will need to consider sums of Laurent $\omega$-monomials in $\mathcal{X}$.

\begin{definition}
Let $\mathcal{X}$ be a collection of variables $x_1,\dots,x_n$. Then define
\[
\mathbf{Z}^\omega \{\mathcal{X}\} \coloneqq  \mathbf{Z}^\omega\langle x_1,x_1^{-1}, \dots, x_n, x_n^{-1}\rangle / ( x_j x_j^{-1} = x_j^{-1} x_j=1, ~ x_j x_k = \omega^{2\varepsilon_{jk}} x_k x_j )
\]
where $\mathbf{Z}^\omega \coloneqq \mathbb{Z}[\omega,\omega^{-1}]$.
\end{definition}

Equivalently, $\mathbf{Z}^\omega \{\mathcal{X}\}$ is the algebra generated by Laurent $\omega$-monomials in $\mathcal{X}$, with multiplication given by the distributive law. It can be decomposed into subgroups according to parity.

\begin{definition}
For $Q \in \mathbb{Z}/4\mathbb{Z}$ and ${\bf q}\in(\mathbb{Z}/2\mathbb{Z})^n$, let $(\mathbf{Z}^\omega\{\mathcal{X}\})_{(Q,{\bf q})}$ be the additive subgroup of $\mathbf{Z}^\omega\{\mathcal{X}\}$ generated by those Laurent $\omega$-monomials in~$\mathcal{X}$ that have parity $(Q,{\bf q})$.
\end{definition}

In view of Lemma~\ref{lem:parity_compatibility_means_same_parity}, we say that two nonzero elements of $\mathbf{Z}^\omega \{\mathcal{X}\}$ are \emph{parity compatible} if they lie in some common $(\mathbf{Z}^\omega\{\mathcal{X}\})_{(Q,{\bf q})}$. In this case, they are said to be of \emph{parity} $(Q,\mathbf{q})$.

\begin{lemma}
One has
\[
\mathbf{Z}^\omega \{\mathcal{X}\} = \bigoplus_{(Q,{\bf q}) \in \mathbb{Z}/4\mathbb{Z} \times (\mathbb{Z}/2\mathbb{Z})^n} (\mathbf{Z}^\omega\{\mathcal{X}\})_{(Q,{\bf q})}. \qed
\]
\end{lemma}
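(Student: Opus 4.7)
The plan is to reduce the statement to a purely combinatorial fact about a $\mathbb{Z}$-basis of $\mathbf{Z}^\omega\{\mathcal{X}\}$. First, I would observe that the algebra $\mathbf{Z}^\omega\{\mathcal{X}\}$ is a quantum torus: using the defining commutation relations $x_jx_k = \omega^{2\varepsilon_{jk}}x_kx_j$ together with $x_jx_j^{-1}=1$, any element can be rewritten as a $\mathbb{Z}$-linear combination of the standard-form monomials $\omega^N x_1^{p_1}\cdots x_n^{p_n}$ with $N\in\mathbb{Z}$ and $p_i\in\mathbb{Z}$. These standard monomials form a $\mathbb{Z}$-basis of $\mathbf{Z}^\omega\{\mathcal{X}\}$, which is the standard Poincaré--Birkhoff--Witt-type fact for a quantum torus and can be verified either by a straightforward diamond/rewriting argument or by exhibiting the faithful action on $\mathbf{Z}^\omega\{\mathcal{X}\}$ itself.

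Next, by Lemma~\ref{lem:uniqueness_of_parity}, each basis element $\omega^N x_1^{p_1}\cdots x_n^{p_n}$ has a uniquely determined parity, namely $(N \bmod 4,\,(p_1\bmod 2,\dots,p_n\bmod 2))$. Hence the set of standard monomials is partitioned into disjoint subsets indexed by $(Q,\mathbf{q})\in\mathbb{Z}/4\mathbb{Z}\times(\mathbb{Z}/2\mathbb{Z})^n$. By definition, $(\mathbf{Z}^\omega\{\mathcal{X}\})_{(Q,\mathbf{q})}$ is the additive subgroup generated by those standard monomials whose parity is $(Q,\mathbf{q})$, so it is the free $\mathbb{Z}$-submodule on the corresponding block of the partition.

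From this, both the spanning property and the directness of the sum follow immediately: any element of $\mathbf{Z}^\omega\{\mathcal{X}\}$ is a finite $\mathbb{Z}$-linear combination of standard monomials, which can be regrouped according to the parity of each monomial, giving an expression as a sum of elements in the $(\mathbf{Z}^\omega\{\mathcal{X}\})_{(Q,\mathbf{q})}$; and the regrouping is unique because distinct parity subgroups are spanned by disjoint subsets of a single $\mathbb{Z}$-basis. This gives the claimed direct sum decomposition.

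The only step that requires any genuine argument is the assertion that the standard monomials form a $\mathbb{Z}$-basis, i.e.\ that no nontrivial $\mathbb{Z}$-linear relation holds among them. This is where a careful reader could object, but it is standard for quantum tori, and in the present context one can prove it by defining a grading on the free algebra by $\mathbb{Z}\oplus\mathbb{Z}^n$ (recording the $\omega$-exponent and the multi-exponent in the $x_i$), checking that the defining relations are homogeneous, and observing that each graded piece is one-dimensional over $\mathbb{Z}$ with generator the corresponding standard monomial. Once this is in place, the direct sum decomposition is purely formal.
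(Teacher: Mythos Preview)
Your argument is correct. The paper does not actually give a proof of this lemma: it simply appends a \qed to the statement, treating the direct sum decomposition as immediate from the definitions. Your write-up is exactly the natural justification behind that \qed --- the standard-form Laurent $\omega$-monomials form a $\mathbb{Z}$-basis of the quantum torus, Lemma~\ref{lem:uniqueness_of_parity} partitions this basis by parity, and the subgroups $(\mathbf{Z}^\omega\{\mathcal{X}\})_{(Q,\mathbf{q})}$ are by definition the spans of the blocks of this partition. So there is no meaningful difference in approach; you have simply spelled out what the authors regarded as routine.
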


\begin{lemma}
\label{lem:multiplication_of_parity_subspaces}
The parity is ``multiplicative'' in the following sense: For any two elements $(Q,{\bf q})$ and $(Q', {\bf q}')$ of $\mathbb{Z}/4\mathbb{Z} \times (\mathbb{Z}/2\mathbb{Z})^n$, there exists a unique third element $(Q'',{\bf q}'')$ such that
\[
(\mathbf{Z}^\omega \{\mathcal{X}\} )_{(Q,{\bf q})} \cdot (\mathbf{Z}^\omega\{\mathcal{X}\} )_{(Q',{\bf q}')} \subseteq (\mathbf{Z}^\omega\{\mathcal{X}\})_{(Q'',{\bf q}'')}.
\]
\end{lemma}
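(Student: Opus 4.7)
The plan is to reduce the statement to a direct commutator computation on single Laurent $\omega$-monomials. By definition, the additive group $(\mathbf{Z}^\omega\{\mathcal{X}\})_{(Q,\mathbf{q})}$ is generated by Laurent $\omega$-monomials of parity $(Q,\mathbf{q})$, and similarly for $(\mathbf{Z}^\omega\{\mathcal{X}\})_{(Q',\mathbf{q}')}$. By distributivity, it therefore suffices to exhibit a single $(Q'',\mathbf{q}'')$ such that the product of any Laurent $\omega$-monomial of parity $(Q,\mathbf{q})$ with any Laurent $\omega$-monomial of parity $(Q',\mathbf{q}')$ is again a Laurent $\omega$-monomial of parity $(Q'',\mathbf{q}'')$.

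To do this, I would choose representatives in standard form, $f = \omega^N x_1^{p_1}\cdots x_n^{p_n}$ and $g = \omega^{N'} x_1^{p_1'}\cdots x_n^{p_n'}$, where $N\equiv Q$, $N'\equiv Q'$ modulo $4$, and $p_i\equiv q_i$, $p_i'\equiv q_i'$ modulo $2$. Using the relation $x_j^a x_i^b = \omega^{2\varepsilon_{ji}ab}\, x_i^b x_j^a$ to move each block $x_i^{p_i'}$ leftward past all $x_j^{p_j}$ with $j>i$, a straightforward induction places $fg$ in standard form:
\[
fg \;=\; \omega^{\,N+N' \,-\, 2\sum_{i<j}\varepsilon_{ij}\,p_j\,p_i'} \; x_1^{p_1+p_1'} \cdots x_n^{p_n+p_n'}.
\]

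Then I would define
\[
q_i'' \;:=\; q_i + q_i' \pmod{2}, \qquad Q'' \;:=\; Q + Q' - 2\sum_{i<j}\varepsilon_{ij}\, q_j\, q_i' \pmod{4},
\]
and verify that the displayed monomial has parity $(Q'',\mathbf{q}'')$. Indeed $p_i+p_i'\equiv q_i''$ modulo $2$ is immediate, and for the $\omega$-exponent we note that $2p_j p_i'$ modulo $4$ depends only on $p_j$ and $p_i'$ modulo $2$; hence $2\sum_{i<j}\varepsilon_{ij} p_j p_i'\equiv 2\sum_{i<j}\varepsilon_{ij} q_j q_i'$ modulo $4$, and consequently $N+N'-2\sum_{i<j}\varepsilon_{ij}p_j p_i'\equiv Q''$ modulo $4$. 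This shows $fg\in(\mathbf{Z}^\omega\{\mathcal{X}\})_{(Q'',\mathbf{q}'')}$.

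Since $(Q'',\mathbf{q}'')$ was defined purely in terms of $(Q,\mathbf{q})$ and $(Q',\mathbf{q}')$, it is independent of the choice of representatives, which gives the required inclusion. Uniqueness of $(Q'',\mathbf{q}'')$ then follows immediately from Lemma~\ref{lem:uniqueness_of_parity}, since any single Laurent $\omega$-monomial lies in exactly one parity class. There is no real obstacle in the argument; the only point requiring attention is the careful bookkeeping of the $\omega$-exponent when moving the factors of $g$ past those of $f$ to reach standard form.
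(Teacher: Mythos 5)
Your proof is correct, and the computation is carefully done. It does, however, take a different route from the paper's own argument. The paper invokes the machinery it has already built: it picks one Laurent $\omega$-monomial $f$ of parity $(Q,\mathbf{q})$ and one $f'$ of parity $(Q',\mathbf{q}')$, observes that $ff'$ has some unique parity $(Q'',\mathbf{q}'')$ by Lemma~\ref{lem:uniqueness_of_parity}, and then uses Corollary~\ref{cor:multiplicativity_of_parity_compatibility} (which says $f\sim g$ and $f'\sim g'$ imply $ff'\sim gg'$) together with Lemma~\ref{lem:parity_compatibility_means_same_parity} to conclude that every other product of representatives lands in the same parity class. You instead redo the underlying commutator bookkeeping from scratch, moving the factors of $g$ past those of $f$ to reach standard form and tracking the $\omega$-exponent explicitly. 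This is essentially the same calculation that the paper packaged once and for all into Lemma~\ref{lem:multiplication_of_a_common_factor_preserves_parity_compatibility}, so your proof is somewhat redundant with the earlier development; on the other hand, it has the modest advantage of producing a concrete formula for the resulting parity class, namely $\mathbf{q}'' = \mathbf{q}+\mathbf{q}'$ and $Q'' = Q + Q' - 2\sum_{i<j}\varepsilon_{ij}\,q_j\,q_i' \pmod 4$, which the paper does not record. The key point you got right, and which is where a careless version of this argument would fail, is that $2\,p_j p_i' \pmod 4$ depends only on $p_j$ and $p_i'$ modulo~$2$, so the correction to the $\omega$-exponent is indeed determined by $(\mathbf{q},\mathbf{q}')$ alone. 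Both proofs are valid; the paper's is shorter given what came before, while yours is more self-contained and more explicit.
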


\begin{proof}
Let $f\in (\mathbf{Z}^\omega\{\mathcal{X}\})_{(Q,{\bf q})}$ and  $f'\in (\mathbf{Z}^\omega\{\mathcal{X}\})_{(Q',{\bf q}')}$ be Laurent $\omega$-monomials. Then their product $ff'$ is again a Laurent $\omega$-monomial in~$\mathcal{X}$, hence is of parity $(Q'',{\bf q}'')$ for a unique $(Q'',{\bf q}'')$, by Lemma~\ref{lem:uniqueness_of_parity}. Pick other Laurent $\omega$-monomials in $\mathcal{X}$, say $g\in (\mathbf{Z}^\omega(\{\mathcal{X}\})_{(Q,{\bf q})}$ and $g' \in (\mathbf{Z}^\omega(\{\mathcal{X}\})_{(Q',{\bf q}')}$.  Since $f\sim g$ and $f' \sim g'$, we get $f f' \sim g g'$ from Corollary~\ref{cor:multiplicativity_of_parity_compatibility}, and hence Lemma~\ref{lem:parity_compatibility_means_same_parity} tells us that $g g'$ is also of parity $(Q'',{\bf q}'')$.
\end{proof}

\begin{lemma}
\label{lem:multiplication_by_zero_parity}
In Lemma~\ref{lem:multiplication_of_parity_subspaces}, if $\mathbf{q}'=\mathbf{0}=(0,\ldots,0) \in (\mathbb{Z}/2\mathbb{Z})^n$, then $(Q'', \mathbf{q}'') = (Q+Q', {\bf q})$. If $\mathbf{q}=\mathbf{0}$, then $(Q'', \mathbf{q}'') = (Q+Q', \mathbf{q}')$.
\end{lemma}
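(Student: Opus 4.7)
The plan is to verify the claim by writing out representatives in standard form and tracking the commutation $\omega$-factors modulo~$4$. By Lemma~\ref{lem:multiplication_of_parity_subspaces}, the resulting parity $(Q'',\mathbf{q}'')$ is uniquely determined, so it suffices to exhibit one pair of Laurent $\omega$-monomial representatives whose product has the claimed parity.

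First I would handle the case $\mathbf{q}' = \mathbf{0}$. Pick Laurent $\omega$-monomial representatives $f\in(\mathbf{Z}^\omega\{\mathcal{X}\})_{(Q,\mathbf{q})}$ and $f'\in(\mathbf{Z}^\omega\{\mathcal{X}\})_{(Q',\mathbf{0})}$ with standard forms
\[
f = \omega^N x_1^{p_1}\cdots x_n^{p_n}, \qquad f' = \omega^{N'} x_1^{2r_1}\cdots x_n^{2r_n},
\]
where $N\equiv Q$, $N'\equiv Q'$ (mod~$4$), and $p_i\equiv q_i$ (mod~$2$); such an $f'$ exists because every exponent in its standard form is even. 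To put $ff'$ in standard form, we must move each factor $x_j^{2r_j}$ past each factor $x_i^{p_i}$ with $i>j$. Using $x_i x_j = \omega^{2\varepsilon_{ij}} x_j x_i$ iteratively gives
\[
x_i^{p_i}\, x_j^{2r_j} = \omega^{4\varepsilon_{ij} p_i r_j}\, x_j^{2r_j}\, x_i^{p_i},
\]
so each swap contributes an $\omega$-exponent divisible by $4$. Collecting all contributions we obtain
\[
ff' = \omega^{\,N+N'+4M}\, x_1^{p_1+2r_1}\cdots x_n^{p_n+2r_n}
\]
for some integer $M$. The overall $\omega$-exponent is $\equiv N+N' \equiv Q+Q'$ (mod~$4$), and each $x_i$-exponent is $p_i+2r_i \equiv q_i$ (mod~$2$). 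Hence $ff'$ is of parity $(Q+Q',\mathbf{q})$, as desired.

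For the second case $\mathbf{q} = \mathbf{0}$, the argument is entirely symmetric. Choose representatives
\[
f = \omega^N x_1^{2s_1}\cdots x_n^{2s_n}, \qquad f' = \omega^{N'} x_1^{p_1'}\cdots x_n^{p_n'}.
\]
Moving each $x_j^{p_j'}$ past $x_i^{2s_i}$ (with $i<j$) picks up an $\omega$-exponent $4\varepsilon_{ij} s_i p_j'$, again divisible by~$4$. The standard form of $ff'$ is therefore $\omega^{N+N'+4M'}\, x_1^{2s_1+p_1'}\cdots x_n^{2s_n+p_n'}$ for some integer $M'$, which has parity $(Q+Q',\mathbf{q}')$.

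The only subtlety is to confirm that all exponent contributions coming from reordering land in $4\mathbb{Z}$ because one factor in each crossing has an even exponent; there is no deeper obstacle. Uniqueness of $(Q'',\mathbf{q}'')$ from Lemmas~\ref{lem:uniqueness_of_parity} and~\ref{lem:multiplication_of_parity_subspaces} then finishes the proof.
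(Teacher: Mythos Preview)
Your proof is correct and follows the same overall strategy as the paper: invoke the uniqueness of $(Q'',\mathbf{q}'')$ from Lemma~\ref{lem:multiplication_of_parity_subspaces} and then compute the parity of the product on a single pair of representatives. The only difference is in the choice of representative for the parity-$(Q',\mathbf{0})$ factor. You take a general element $\omega^{N'} x_1^{2r_1}\cdots x_n^{2r_n}$ and track the commutation factors explicitly, whereas the paper simply picks $\omega^{N'}$ itself (all $x_i$-exponents zero), for which no reordering is needed and the conclusion is immediate. Your computation is valid---and in fact essentially reproves Lemma~\ref{lem:squares_are_absorbed} along the way---but the paper's choice sidesteps that work entirely.
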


\begin{proof}
Note that $\omega^N \in ({\bf Z}^\omega\{\mathcal{X}\})_{(Q',{\bf 0})}$ where $N\equiv Q'$ (mod~$4$). The product of any Laurent $\omega$-monomial in $\mathcal{X}$ belonging to $(\mathbf{Z}^\omega\{\mathcal{X}\})_{(Q,{\bf q})}$ and $\omega^N \in ({\bf Z}^\omega\{\mathcal{X}\})_{(Q',{\bf 0})}$, regardless of the product order, belongs to $({\bf Z}^\omega\{\mathcal{X}\})_{(Q+Q',{\bf q})}$. The claim now follows from Lemma~\ref{lem:multiplication_of_parity_subspaces}.
\end{proof}

\subsection{Parity compatibility of terms}

We will now apply the general theory of the previous section to Laurent polynomials of the form~$\mathbb{I}^\omega(\ell)$ for $\ell$ a lamination. In this section, when we speak of $\mathscr{Z}$-parity compatible monomials, we are taking $\mathscr{Z}$ to be the set of generators $Z_1,\dots,Z_n$ of the Chekhov-Fock algebra associated to an ideal triangulation.

Fix an ideal triangulation $T$, and let $\widehat{T}$ be the associated split ideal triangulation. Let $t_1,\dots,t_m$ and $B_1,\dots,B_n$ be the triangles and biangles of $\widehat{T}$, respectively. If $K\subseteq S\times[0,1]$ is a link with the vertical framing having a projected diagram with no crossings and no kinks, then we will put it in good position with respect to~$\widehat{T}$ as follows. First, deform $K$ by an isotopy so that its projection intersects each edge of $\widehat{T}$ in the minimal number of points. Choose a point where the projection intersects an edge of a biangle~$B_{i_1}$. This point lies on an edge of some triangle $t_{j_1}$, and we can deform~$K$ so that it travels over this triangle at constant elevation before coming to another biangle~$B_{i_2}$. We then deform the link so that its elevation decreases while passing over $B_{i_2}$ until it reaches another triangle $t_{j_2}$. We deform~$K$ so that it passes over this triangle at constant elevation before coming to another biangle~$B_{i_3}$, where its elevation decreases. Continue this process until the curve returns to $B_{i_1}$ for the last time. Since the curve is closed, its elevation must increase as it crosses $B_{i_1}$.

We now orient the edges of each biangle so that the two sides point in the same direction. This allows us to draw the projection to $S$ using the picture conventions of Bonahon and Wong, reviewed in Section~\ref{sec:Preliminaries}. For every biangle other than $B_{i_1}$, the projection has no intersections since the link is always decreasing in elevation. On the other hand, the projection to $B_{i_1}$ will have the following form.
\[
\xy 0;/r.50pc/: 
(5,10)*{B_{i_1}}; 
(0,12)*{}="B"; 
(10,12)*{}="C";
(0,-6)*{}="B1"; 
(10,-6)*{}="C1";
(0,7)*{}="X1"; 
(10,5)*{}="Y1";
(0,5)*{}="X2"; 
(10,3)*{}="Y2";
(0,3)*{}="X3"; 
(10,1)*{}="Y3";
(0,1)*{}="X4"; 
(10,-1)*{}="Y4";
(0,-1)*{}="X0"; 
(10,7)*{}="Y0";
{"B1"\ar@{->}"B"},
{"C1"\ar@{->}"C"},
"X1";"Y1" **\crv{(4,7) & (6,5)};
"X2";"Y2" **\crv{(4,5) & (6,3)};
"X3";"Y3" **\crv{(4,3) & (6,1)};
"X4";"Y4" **\crv{(4,1) & (6,-1)};
"X0";"Y0" **\crv{(4,-1) & (6,7)};
\endxy
\]

Let $K_j=K\cap(t_j\times[0,1])$ and $L_i=K\cap(B_i\times[0,1])$ be the links associated with the triangles and biangles, respectively. Using the skein relations, we can write 
\[
[L_{i_1}]=\sum_{\alpha=1}^N\omega^{2p_\alpha}[L_{i_1,\alpha}]
\]
where $N$ and $p_\alpha$ are integers and each $L_{i_1,\alpha}$ is a link with the vertical framing and no crossings. Suppose we are given a state $\tau_{i_1}:\partial L_{i_1}\rightarrow\{+,-\}$. Then for each $\alpha$, we get a number $\Tr_{B_{i_1}}^\omega([L_{i_1,\alpha},\tau_{i_1}])$ by Definition~\ref{def:computebiangle}. It is expressed as a Laurent monomial in $\omega$.

\begin{lemma}
\label{lem:coefficientcompatibility}
Let $[L_{i_1,\alpha_1}]$ and $[L_{i_1,\alpha_2}]$ be two different resolutions of the skein $[L_{i_1}]$. If $\Tr_{B_{i_1}}^\omega([L_{i_1,\alpha_1},\tau_{i_1}])$ and $\Tr_{B_{i_1}}^\omega([L_{i_1,\alpha_2},\tau_{i_1}])$ are both nonzero, then they are $\mathscr{Z}$-parity compatible.
\end{lemma}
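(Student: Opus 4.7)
The plan is to analyze the explicit biangle trace formula from Definition~\ref{def:computebiangle} and to show that the $\omega$-exponent of every monomial in $\Tr_{B_{i_1}}^\omega([L_{i_1,\alpha},\tau_{i_1}])$, for any resolution $\alpha$ giving a nonzero trace, is congruent modulo~$4$ to a single quantity depending only on the boundary state $\tau_{i_1}$. Once this is established the lemma is immediate from Definition~\ref{def:parity_compatibility}: the trace contains no $Z_i$'s, so the parity vector $\mathbf{q}$ is automatically $\mathbf{0}$ for every resolution, and only the $\omega$-exponent has to be matched modulo~$4$.

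To carry this out, I would first note that nonvanishing of the trace forces $a_-^+=a_+^-=b_+^+=b_-^-=c_+^+=c_-^-=0$, so that
\[
\Tr_{B_{i_1}}^\omega([L_{i_1,\alpha},\tau_{i_1}]) = (-1)^{b_-^++c_+^-}\,\omega^{-5b_-^+-b_+^-+5c_+^-+c_-^+}\,(-\omega^4-\omega^{-4})^d.
\]
The factor $(-\omega^4-\omega^{-4})^d$ expands into terms whose $\omega$-exponents are all multiples of~$4$, so every monomial of the trace has $\omega$-exponent congruent modulo~$4$ to $-5b_-^+-b_+^-+5c_+^-+c_-^+\equiv -B+C\pmod{4}$, where $B\coloneqq b_-^++b_+^-$ and $C\coloneqq c_+^-+c_-^+$ denote the total numbers of $b$- and $c$-arcs. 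Next I would perform a simple boundary count: because $b_+^+=b_-^-=0$, each $b$-arc contributes exactly one $+$-endpoint to the left side of the biangle; similarly each $c$-arc contributes exactly one $+$-endpoint to the right side, while each $a_+^+$-arc contributes one to each side. Hence
\[
\#(\text{left-}+) = a_+^++B, \qquad \#(\text{right-}+) = a_+^++C,
\]
so that $C-B = \#(\text{right-}+)-\#(\text{left-}+)$, a quantity determined entirely by $\tau_{i_1}$.

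Combining these two observations, both $\Tr_{B_{i_1}}^\omega([L_{i_1,\alpha_1},\tau_{i_1}])$ and $\Tr_{B_{i_1}}^\omega([L_{i_1,\alpha_2},\tau_{i_1}])$ lie in the parity subspace $(\mathbf{Z}^\omega\{\mathscr{Z}\})_{(Q,\mathbf{0})}$ with common $Q \equiv \#(\text{right-}+)-\#(\text{left-}+)\pmod{4}$, establishing $\mathscr{Z}$-parity compatibility. The argument is essentially a modular-arithmetic bookkeeping paired with the endpoint count, and I expect no deep obstacle. The only mild subtlety is to handle the overall sign $(-1)^{b_-^++c_+^-}$ and the Laurent polynomial $(-\omega^4-\omega^{-4})^d$ correctly relative to the strict ``Laurent $\omega$-monomial'' terminology of Definition~\ref{def:parity_compatibility}, by interpreting the trace term by term; since both of those contribute only a sign and an $\omega$-exponent already divisible by~$4$, this is a matter of interpretation rather than a genuine difficulty.
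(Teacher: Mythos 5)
Your proof is correct and its core --- expanding the biangle trace formula, discarding the vanishing arc-types, and reducing the $\omega$-exponent modulo $4$ --- is the same as the paper's, but the key invariance step is derived differently. The paper observes topologically that, because $L_{i_1}$ is in minimal position, every arc of it crosses the biangle, so the two sides have equally many boundary points and hence every resolution has $b_-^+ + b_+^- = c_+^- + c_-^+$; the invariance of $C - B$ (in fact its vanishing) is then immediate. You instead count $+$-endpoints on each side using the fixed state $\tau_{i_1}$, giving $C - B = \#(\text{right-}+) - \#(\text{left-}+)$, which is manifestly independent of the resolution. Your route has the mild advantage of not invoking the minimal-position hypothesis, while the paper's gives the sharper fact $B = C$; either way the exponent $-5b_-^+ - b_+^- + 5c_+^- + c_-^+ \equiv C - B \pmod 4$ is determined by $\tau_{i_1}$ alone, and the extra factor $(-\omega^4 - \omega^{-4})^d$ contributes only multiples of $4$, so the parity class $(Q,\mathbf{0})$ is well-defined and common to both resolutions. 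Your remark about the integer sign not fitting the strict ``Laurent $\omega$-monomial'' wording is a fair reading of Definition~\ref{def:parity_compatibility}, but since $(\mathbf{Z}^\omega\{\mathscr{Z}\})_{(Q,\mathbf{q})}$ is defined as the additive subgroup generated by such monomials, the sign causes no difficulty.
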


\begin{proof}
Note that any resolution of $[L_{i_1}]$ has the same number of curves ending on either side of the biangle $B_{i_1}$. It follows that there must be equal numbers of components of the types illustrated in the middle and rightmost pictures immediately preceding Definition~\ref{def:computebiangle}. (Here we are ignoring the values of the associated states.)

Let $[L_{i_1,\alpha_1}]$ and $[L_{i_1,\alpha_2}]$ be two different resolutions of the skein $[L_{i_1}]$. For any resolution $[L_{i_1,\alpha}]$ of this skein $[L_{i_1}]$, we have numbers $b_{\sigma_2}^{\sigma_1}$ and $c_{\sigma_2}^{\sigma_1}$ as in Definition~\ref{def:computebiangle}. By the above remarks, we know that when we pass from $[L_{i_1,\alpha_1}]$ to $[L_{i_1,\alpha_2}]$, the numbers  $b_-^++b_+^-$ and $c_-^++c_+^-$ change by the same amount. This fact, combined with the formula in Definition~\ref{def:computebiangle}, implies that $\Tr_{B_{i_1}}^\omega([L_{i_1,\alpha_1},\tau_{i_1}])$ and $\Tr_{B_{i_1}}^\omega([L_{i_1,\alpha_2},\tau_{i_1}])$ are $\mathscr{Z}$-parity compatible, provided both are nonzero.
\end{proof}

Recall that the quantum trace $\Tr_T^\omega([K])$ is defined as a sum 
\[
\Tr_T^\omega([K])=\sum_{\text{compatible }\sigma_j, \tau_i}\prod_{i=1}^n\Tr_{B_i}^\omega([L_i,\tau_i])\bigotimes_{j=1}^m\Tr_{t_j}^\omega([K_j,\sigma_j])
\]
over mutually compatible states $\sigma_j:\partial K_j\rightarrow\{+,-\}$ and $\tau_i:\partial L_i\rightarrow\{+,-\}$. The factor $\Tr_{t_j}^\omega([K_j,\sigma_j])$ in this expression is given by 
\[
\Tr_{t_j}^\omega([K_j,\sigma_j])=\Tr_{t_j}^\omega([k_1,\sigma_j])\Tr_{t_j}^\omega([k_2,\sigma_j])\dots \Tr_{t_j}^\omega([k_l,\sigma_j])
\]
where $k_1,\dots,k_l$ are the components of $K_j$ and $\Tr_{t_j}^\omega([k_i,\sigma_j])$ is either~0 or a Weyl ordered product of two generators of a triangle algebra, depending on the state~$\sigma_j$. For $i=1,\dots,n$, let us choose a resolution $[L_{i,\alpha}]$ of the skein $[L_i]$ over the biangle $B_i$ using the skein relations. Since we have deformed $K$ into a good position where the only crossings over a biangle occur over $B_{i_1}$, this amounts to a choice of resolution $[L_{i_1,\alpha}]$ of $[L_{i_1}]$. Given a set $\mathbf{s}=\{\sigma_j,\tau_i\}$ of mutually compatible states, we define 
\[
\Tr_T^\omega(\mathbf{s},\alpha)\coloneqq\prod_{i=1}^n\Tr_{B_i}^\omega([L_{i,\alpha},\tau_i])\bigotimes_{j=1}^m\Tr_{t_j}^\omega([K_j,\sigma_j]).
\]

We will also consider a slight modification of $\Tr_T^\omega(\mathbf{s},\alpha)$. Precisely, we replace each factor $\Tr_{t_j}^\omega([k_i,\sigma_j])$ in the above definition by the expression 
\[
(\Tr_{t_j}^\omega)'([k_i,\sigma_j])\coloneqq[Z_{j1}^{\sigma_1}Z_{j2}^{\sigma_2}]
\]
from part~\ref{part:trianglecase} of Theorem~\ref{thm:BWmain} and denote their product by $(\Tr_{t_j}^\omega)'([K_j,\sigma_j])$. We then define 
\[
(\Tr_T^\omega)'(\mathbf{s},\alpha)\coloneqq\prod_{i=1}^n\Tr_{B_i}^\omega([L_{i,\alpha},\tau_i])\bigotimes_{j=1}^m(\Tr_{t_j}^\omega)'([K_j,\sigma_j]).
\]
This modified expression is equal to $\Tr_T^\omega(\mathbf{s},\alpha)$ whenever the latter is nonzero.

\begin{lemma}
\label{lem:statecompatibility}
Let $\mathbf{s}_1=\{\sigma_{1j},\tau_{1i}\}$ and $\mathbf{s}_2=\{\sigma_{2j},\tau_{2i}\}$ be two collections of mutually compatible states for the links $K_j$ and $L_i$. If $(\Tr_T^\omega)'(\mathbf{s}_1,\alpha_1)$ and $(\Tr_T^\omega)'(\mathbf{s}_2,\alpha_2)$ are both nonzero, then they are $\mathscr{Z}$-parity compatible.
\end{lemma}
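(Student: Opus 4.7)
The plan is to reduce the comparison to two independent modifications: (a) fixing the state collection $\mathbf{s}=\mathbf{s}_1$ and moving only the resolution from $\alpha_1$ to $\alpha_2$; and (b) fixing the resolution at $\alpha_2$ and moving only the state collection from $\mathbf{s}_1$ to $\mathbf{s}_2$. Since $\mathscr{Z}$-parity compatibility is transitive, the lemma will follow once each modification is shown to preserve parity. Step (a) is nearly immediate from Lemma~\ref{lem:coefficientcompatibility}: the only factor depending on $\alpha$ is the scalar $\Tr_{B_{i_1}}^\omega([L_{i_1,\alpha},\tau_{1,i_1}])$, which is $\mathscr{Z}$-parity compatible between $\alpha_1$ and $\alpha_2$ whenever both are nonzero; since this factor lies in the $\mathbf{q}=\mathbf{0}$ subspace, Lemma~\ref{lem:multiplication_by_zero_parity} promotes the compatibility to the full product.

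The substance of the proof is step (b). Because $K$ is closed and in good position, a compatible state $\mathbf{s}$ amounts to a choice of sign at every intersection of $K$ with an interior edge of $\widehat T$. For $(\Tr_T^\omega)'(\mathbf{s},\alpha_2)$ to be nonzero, every arc in every biangle must carry a \emph{legal} pair of endpoint states in the sense of Definition~\ref{def:computebiangle}: equal signs for a horizontal arc, opposite signs for a side arc. Any two legal states differ by finitely many elementary \emph{biangle-arc flips}, each of which toggles the signs at both endpoints of a single biangle arc; such a flip preserves legality of the flipped arc and leaves every other arc untouched, so intermediate states along the chain remain legal and the corresponding $(\Tr_T^\omega)'$-values remain nonzero. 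I would then compute the effect of one biangle-arc flip on the parity. In the horizontal case, the biangle formula is unchanged (neither $a_+^+$ nor $a_-^-$ appears in it), while each of the two adjacent triangles contributes an $\omega$-shift of $\pm 2$ via its Weyl correction $\omega^{-\sigma_1\sigma_2}$, for a combined $\omega$-shift in $\{-4,0,4\}\subseteq 4\mathbb{Z}$ and $Z$-exponent shifts of $\pm 2$. In the side case the biangle formula shifts its $\omega$-power by $\pm 4$ through $-(5b_-^++b_+^-)$ or $5c_+^-+c_-^+$, the two affected triangle arcs lie in the same adjacent triangle and their Weyl corrections combine to an $\omega$-shift again in $\{-4,0,4\}$, and the $Z$-exponent of the shared edge-generator undergoes no net change. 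Iterating biangle-arc flips and applying Corollary~\ref{cor:multiplicativity_of_parity_compatibility} then converts $(\Tr_T^\omega)'(\mathbf{s}_1,\alpha_2)$ into $(\Tr_T^\omega)'(\mathbf{s}_2,\alpha_2)$ through parity-preserving steps.

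The main obstacle will be the careful bookkeeping of the side-arc case: because the two affected triangle arcs live in a single triangle algebra and share a generator corresponding to the common edge of $\widehat T$, one must reorder factors using the commutation $Z_{i1}Z_{i2}=\omega^2 Z_{i2}Z_{i1}$ and confirm that the two Weyl corrections combine with the biangle's $\pm 4$ shift to land precisely in $4\mathbb{Z}$, while the shared $Z$-exponent cancels in net. A secondary point requiring verification is the decomposition claim that any two legal states are connected by biangle-arc flips; this follows from the observation that, over each biangle separately, a legal state is determined by one free sign per arc, so legal states form a $(\mathbb{Z}/2)$-torsor on which biangle-arc flips act freely and transitively.
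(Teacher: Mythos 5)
Your proposal follows essentially the same route as the paper: both reduce the comparison of $(\Tr_T^\omega)'(\mathbf{s}_1,\alpha_1)$ and $(\Tr_T^\omega)'(\mathbf{s}_2,\alpha_2)$ to a chain of elementary moves on states, each of which flips the signs at both endpoints of a single biangle arc (the ``horizontal'' and ``side'' cases in your terminology correspond exactly to the paper's two types of ``simultaneous changes''), and both dispose of the resolution change via Lemma~\ref{lem:coefficientcompatibility}. The per-move parity bookkeeping you sketch — the $\pm4$ biangle shift for a side arc, the $\pm2$ Weyl-correction shifts combining to a multiple of $4$, the even $Z$-exponent shifts — matches the paper's identities $(\Tr_{t_j}^\omega)'([K_j,\sigma_{1j}])=\omega^{4N_j\pm2}Z_{je}^{\pm2}(\Tr_{t_j}^\omega)'([K_j,\sigma_{2j}])$ and the side-arc analogue. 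Your explicit remark that legal states form a torsor under biangle-arc flips (hence that finitely many such moves always suffice) is a helpful point the paper leaves implicit. One small gap: your analysis of the horizontal case assumes the biangle $B_e$ has two \emph{distinct} adjacent triangles; when $B_e$ is the internal biangle of a self-folded triangle, both endpoints of a horizontal arc lie on the same triangle algebra, and the commutation bookkeeping differs in detail. The paper flags this case separately and notes ``a similar argument'' handles it; you should do the same (the conclusion still holds because the combined $\omega$-shift remains in $4\mathbb{Z}$ and the $Z$-shifts remain even, but the derivation is not literally the one you wrote).
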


\begin{proof}
Suppose that $\mathbf{s}_2$ is obtained from $\mathbf{s}_1$ by simultaneously changing the common signs associated to the points on either side of a strand that crosses a biangle $B_e$. Suppose~$t_j$ and~$t_J$ are distinct triangles on either side of $B_e$, and let $Z_{je}$ and $Z_{Je}$ be the generators of the associated triangle algebras corresponding to these edges. Then it is straightforward to verify that 
\[
(\Tr_{t_j}^\omega)'([K_j,\sigma_{1j}])=\omega^{4N_j\pm2}Z_{je}^{\pm2}(\Tr_{t_j}^\omega)'([K_j,\sigma_{2j}])
\]
and 
\[
(\Tr_{t_J}^\omega)'([K_J,\sigma_{1J}])=\omega^{4N_J\pm2}Z_{Je}^{\pm2}(\Tr_{t_J}^\omega)'([K_J,\sigma_{2J}])
\]
for integers $N_j$ and $N_J$ where the signs are either all $+$ or all $-$. These are the only factors of $(\Tr_T^\omega)'(\mathbf{s}_1,\alpha)$ that differ from the corresponding factors of $(\Tr_T^\omega)'(\mathbf{s}_2,\alpha)$. It follows that 
\[
(\Tr_T^\omega)'(\mathbf{s}_1,\alpha) \sim (\Tr_T^\omega)'(\mathbf{s}_2,\alpha)
\]
provided both are nonzero. A similar argument shows that $(\Tr_T^\omega)'(\mathbf{s}_1,\alpha)\sim (\Tr_T^\omega)'(\mathbf{s}_2,\alpha)$ in the case where the biangle $B_e$ is associated with the internal edge of a self-folded triangle, provided both are nonzero.

Next, suppose that $\mathbf{s}_2$ is obtained from $\mathbf{s}_1$ by simultaneously changing a~$+$ sign and a~$-$ sign associated to points on the same side of a biangle $B_e$, connected by a strand in~$B_e$. Let $t_j$ be the triangle adjacent to this edge. It is straightforward to verify that 
\[
(\Tr_{t_j}^\omega)'([K_j,\sigma_{1j}])=\omega^{4N_j}(\Tr_{t_j}^\omega)'([K_j,\sigma_{2j}])
\]
for some integer $N_j$. In this case, the factor $\Tr_{B_e}^\omega([L_{e,\alpha},\tau_e])$ also changes by a factor of $\omega^{\pm4}$. It follows that 
\[
(\Tr_T^\omega)'(\mathbf{s}_1,\alpha) \sim (\Tr_T^\omega)'(\mathbf{s}_2,\alpha)
\]
provided both expressions are nonzero.

The result now follows by applying finitely many elementary moves of the above types together with Lemma~\ref{lem:coefficientcompatibility}.
\end{proof}

Note that, by definition, $\Tr_T^\omega([K])$ is given by 
\[
\Tr_T^\omega([K])=\sum_{\mathbf{s},\alpha}\omega^{2p_\alpha}\Tr_T^\omega(\mathbf{s},\alpha)
\]
where the sum is over all collections $\mathbf{s}$ of compatible states and all $\alpha$. Hence Lemma~\ref{lem:statecompatibility} implies the following.

\begin{lemma}
\label{lem:parity_compatibility_of_each_term}
If $K\subseteq S\times[0,1]$ is a link with the vertical framing and no crossings, then all terms of $\Tr_T^\omega([K])$ are $\mathscr{Z}$-parity compatible.\qed
\end{lemma}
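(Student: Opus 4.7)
The plan is to combine Lemma~\ref{lem:statecompatibility} with a short parity count of the scalar prefactors $\omega^{2p_\alpha}$. Starting from the identity
\[
\Tr_T^\omega([K]) = \sum_{\mathbf{s},\alpha}\omega^{2p_\alpha}\,\Tr_T^\omega(\mathbf{s},\alpha)
\]
recorded just before the lemma, I restrict attention to those pairs $(\mathbf{s},\alpha)$ for which the summand is nonzero. On every such pair we have $\Tr_T^\omega(\mathbf{s},\alpha)=(\Tr_T^\omega)'(\mathbf{s},\alpha)$, and Lemma~\ref{lem:statecompatibility} already hands us a common $\mathscr{Z}$-parity, say $(Q_0,\mathbf{q}_0)$, shared by all such nonzero $(\Tr_T^\omega)'(\mathbf{s},\alpha)$.

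It therefore remains only to compare the scalars $\omega^{2p_\alpha}$ as $\alpha$ varies. These arise from the expansion $[L_{i_1}]=\sum_\alpha\omega^{2p_\alpha}[L_{i_1,\alpha}]$ obtained by applying the skein relation $[K_1]=\omega^2[K_0]+\omega^{-2}[K_\infty]$ at each of the $k$ crossings in $B_{i_1}$, which is the unique biangle at which $K$ has been allowed to self-cross in the good position described just before Lemma~\ref{lem:coefficientcompatibility}. Each resolution contributes a factor $\omega^{\pm 2}$, so $p_\alpha$ is a signed sum of $k$ ones and hence $p_\alpha\equiv k\pmod 2$, giving $2p_\alpha\equiv 2k\pmod 4$ for every $\alpha$. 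Consequently each $\omega^{2p_\alpha}$ is of $\mathscr{Z}$-parity $(2k\bmod 4,\mathbf{0})$.

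Lemma~\ref{lem:multiplication_by_zero_parity} then closes the argument: multiplying an element of parity $(Q_0,\mathbf{q}_0)$ by a scalar of parity $(2k\bmod 4,\mathbf{0})$ produces an element of parity $(Q_0+2k\bmod 4,\mathbf{q}_0)$, and this output parity is independent of $\alpha$. Therefore every nonzero term $\omega^{2p_\alpha}(\Tr_T^\omega)'(\mathbf{s},\alpha)$ of $\Tr_T^\omega([K])$ lies in this single parity class, which is exactly the asserted $\mathscr{Z}$-parity compatibility. The only real content beyond what the preceding lemmas provide is the observation that the crossing count $k$ is intrinsic to the good position, so $2p_\alpha$ has a fixed residue modulo~$4$; once this is noted, there is no serious obstacle.
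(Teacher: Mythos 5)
Your proof is correct, and it follows the same route the paper intends: reduce to Lemma~\ref{lem:statecompatibility} via the displayed formula $\Tr_T^\omega([K])=\sum_{\mathbf{s},\alpha}\omega^{2p_\alpha}\Tr_T^\omega(\mathbf{s},\alpha)$. The paper offers no written proof (the lemma is closed by a bare $\qed$ immediately after invoking Lemma~\ref{lem:statecompatibility}), so it treats as implicit the one step you spell out explicitly: that the scalar prefactors $\omega^{2p_\alpha}$ all share a common parity class $(2k\bmod 4,\mathbf{0})$, because the skein relation $[K_1]=\omega^{2}[K_0]+\omega^{-2}[K_\infty]$ contributes $\omega^{\pm 2}$ at each of the $k$ crossings in $B_{i_1}$, forcing $2p_\alpha\equiv 2k\pmod 4$ independently of $\alpha$. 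That observation is genuinely needed, since Lemma~\ref{lem:statecompatibility} compares only the $(\Tr_T^\omega)'(\mathbf{s},\alpha)$ and says nothing about the prefactors, and Lemma~\ref{lem:coefficientcompatibility} likewise concerns $\Tr_{B_{i_1}}^\omega([L_{i_1,\alpha},\tau_{i_1}])$ without the $\omega^{2p_\alpha}$. Your invocation of Lemma~\ref{lem:multiplication_by_zero_parity} to combine the two parities is also the right tool. In short, this is a correct and slightly more careful write-up of what the paper asserts without proof.
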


Note that if all terms of an element $f\in\mathbf{Z}^\omega\{\mathscr{Z}\}$ are parity compatible, then in particular, the highest term of $f$ has the same parity as $f$ itself. Thus we can reformulate the previous result in the following way.

\begin{lemma}
\label{lem:highest_term_represents_parity_when_easy}
Let $\ell$ be a lamination represented by a single nonperipheral curve of weight~1. Let $(Q,{\bf q})$ be the unique element of $\mathbb{Z}/4\mathbb{Z} \times (\mathbb{Z}/2\mathbb{Z})^n$ such that $[\mathbb{I}^\omega(\ell)]^H \in (\mathbf{Z}^\omega\{\mathscr{Z}\})_{(Q,{\bf q})}$. Then $\mathbb{I}^\omega(\ell) \in (\mathbf{Z}^\omega\{\mathscr{Z}\})_{(Q,{\bf q})}$. \qed
\end{lemma}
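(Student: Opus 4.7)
The statement is a direct consequence of Lemma~\ref{lem:parity_compatibility_of_each_term} together with the definition of $\mathbb{I}^\omega(\ell)$ for a single nonperipheral curve of weight~1. The plan is to reduce the claim to the setting of that lemma and then observe that parity compatibility of all terms forces the entire sum to lie in the parity subgroup determined by any one of its terms (in particular the highest).

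First, I would recall that by Definition~\ref{def:nonperipheral}, we have $\mathbb{I}^\omega(\ell) = \Tr_T^\omega([K])$ where $K \subseteq S \times [0,1]$ is a framed link projecting to $\ell$ with constant elevation and vertical framing at every point. Since $\ell$ is a single simple closed curve on $S$, its projection has no self-intersections, and lifting at constant elevation with vertical framing produces a link $K$ whose projected diagram has no crossings (and no kinks). Thus $K$ satisfies the hypothesis of Lemma~\ref{lem:parity_compatibility_of_each_term}.

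Next, applying Lemma~\ref{lem:parity_compatibility_of_each_term} directly yields that all terms appearing in the expansion $\Tr_T^\omega([K]) = \mathbb{I}^\omega(\ell)$ are $\mathscr{Z}$-parity compatible. By Lemma~\ref{lem:parity_compatibility_means_same_parity}, there is a unique $(Q, {\bf q}) \in \mathbb{Z}/4\mathbb{Z} \times (\mathbb{Z}/2\mathbb{Z})^n$ such that every term lies in $(\mathbf{Z}^\omega\{\mathscr{Z}\})_{(Q,{\bf q})}$. Since this subgroup is closed under addition, the entire Laurent polynomial $\mathbb{I}^\omega(\ell)$ lies in $(\mathbf{Z}^\omega\{\mathscr{Z}\})_{(Q,{\bf q})}$. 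In particular, the highest term $[\mathbb{I}^\omega(\ell)]^H$ (which exists and is unique by Proposition~\ref{prop:highest}) also lies in $(\mathbf{Z}^\omega\{\mathscr{Z}\})_{(Q,{\bf q})}$, so the $(Q,{\bf q})$ determined by the highest term coincides with the $(Q,{\bf q})$ for the whole expression, as desired.

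There is no substantive obstacle: the real work was carried out in the preceding series of lemmas (Lemmas~\ref{lem:coefficientcompatibility}, \ref{lem:statecompatibility}, and~\ref{lem:parity_compatibility_of_each_term}), which analyze the state sum defining $\Tr_T^\omega([K])$ biangle-by-biangle and show that different resolutions of the single crossing over $B_{i_1}$ and different compatible state assignments only ever modify terms by factors that preserve parity. The present lemma is simply the repackaging of that result in the language of $\mathbb{I}^\omega$, which will be the convenient form for the later proof that $\mathbb{I}^\omega$ descends from $\mathcal{Z}_T^\omega$ to $\mathcal{X}_T^q$.
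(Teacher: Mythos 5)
Your proof is correct and follows precisely the route the paper intends: Lemma~\ref{lem:parity_compatibility_of_each_term} applied to the crossing-free link $K$ from Definition~\ref{def:nonperipheral} gives that every term of $\Tr_T^\omega([K])=\mathbb{I}^\omega(\ell)$ lies in a single parity subgroup, which therefore contains the highest term too. This matches the paper's own (unwritten, \qed-marked) argument, which records exactly this reformulation in the sentence immediately preceding the lemma.
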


We now proceed to consider more general laminations. We begin with a couple of technical lemmas.

\begin{lemma}
\label{lem:even_power_is_good}
Let $\ell$ be a lamination consisting of a single nonperipheral curve of weight~1. Then $\mathbb{I}^\omega(\ell)^{2b} \in (\mathbf{Z}^\omega\{\mathscr{Z}\})_{(0,{\bf 0})}$, for every nonnegative integer $b$.
\end{lemma}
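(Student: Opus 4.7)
The plan is to establish that $\mathbb{I}^\omega(\ell)^2$ has parity $(0,\mathbf{0})$, then bootstrap to all even powers by induction using the multiplicativity of the zero-parity subspace.

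First, I would verify that $\mathbb{I}^\omega(\ell)^2$ lies in a single parity subspace $(Q'',\mathbf{q}'')$. By Definition~\ref{def:nonperipheral}, $\mathbb{I}^\omega(\ell) = \Tr_T^\omega([K])$ for a framed link $K \subset S\times[0,1]$ with vertical framing and no crossings in its projection, so Lemma~\ref{lem:parity_compatibility_of_each_term} gives $\mathbb{I}^\omega(\ell) \in (\mathbf{Z}^\omega\{\mathscr{Z}\})_{(Q,\mathbf{q})}$ for some $(Q,\mathbf{q})$, and then Lemma~\ref{lem:multiplication_of_parity_subspaces} ensures $\mathbb{I}^\omega(\ell)^2$ lies in a single parity subspace.

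To identify $(Q'',\mathbf{q}'')$, by Lemma~\ref{lem:uniqueness_of_parity} it suffices to compute the parity of a single monomial appearing in $\mathbb{I}^\omega(\ell)^2$, and the natural choice is the square of the highest term of $\mathbb{I}^\omega(\ell)$. By Proposition~\ref{prop:highest}, this highest term is $[Z_1^{\mu_1}\cdots Z_n^{\mu_n}] = \omega^{-\sum_{i<j}\varepsilon_{ij}\mu_i\mu_j}\,Z_1^{\mu_1}\cdots Z_n^{\mu_n}$ with $\mu_i = 2a_i$. A direct calculation using $Z_iZ_j = \omega^{2\varepsilon_{ij}}Z_jZ_i$ shows
\[
[Z_1^{\mu_1}\cdots Z_n^{\mu_n}]^2 \;=\; \omega^{-4\sum_{i<j}\varepsilon_{ij}\mu_i\mu_j}\,Z_1^{2\mu_1}\cdots Z_n^{2\mu_n},
\]
whose $\omega$-exponent is a multiple of $4$ and whose $Z$-exponents are all even. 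Hence $(Q'',\mathbf{q}'') = (0,\mathbf{0})$, so $\mathbb{I}^\omega(\ell)^2 \in (\mathbf{Z}^\omega\{\mathscr{Z}\})_{(0,\mathbf{0})}$.

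Finally, I would handle general $b$ by induction. The case $b=0$ is trivial since $1 \in (\mathbf{Z}^\omega\{\mathscr{Z}\})_{(0,\mathbf{0})}$. For the inductive step, write $\mathbb{I}^\omega(\ell)^{2b} = \mathbb{I}^\omega(\ell)^{2(b-1)}\cdot\mathbb{I}^\omega(\ell)^2$; both factors have parity $(0,\mathbf{0})$, and Lemma~\ref{lem:multiplication_by_zero_parity} gives $\mathbb{I}^\omega(\ell)^{2b} \in (\mathbf{Z}^\omega\{\mathscr{Z}\})_{(0,\mathbf{0})}$. The only part with any content is the commutation bookkeeping for $[Z^\mu]^2$: the key feature is that the $\omega$-exponent $-\sum_{i<j}\varepsilon_{ij}\mu_i\mu_j$ in the Weyl ordering \emph{doubles} upon squaring, and commuting the two copies of $Z_1^{\mu_1}\cdots Z_n^{\mu_n}$ past each other contributes another copy of the same quantity multiplied by $2$, producing the essential divisibility by $4$.
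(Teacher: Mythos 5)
Your proof is correct and follows essentially the same route as the paper: both identify the parity of $\mathbb{I}^\omega(\ell)^{2b}$ by reducing to the parity of the Weyl-ordered highest term via Lemma~\ref{lem:highest_term_represents_parity_when_easy} (equivalently Lemma~\ref{lem:parity_compatibility_of_each_term}) together with Lemma~\ref{lem:multiplication_of_parity_subspaces}, and then check directly that the $\omega$-exponent is divisible by $4$ and the $Z$-exponents are even. The only cosmetic difference is that you treat the case $b=1$ explicitly and then bootstrap by induction with Lemma~\ref{lem:multiplication_by_zero_parity}, while the paper computes $[\mathbb{I}^\omega(\ell)^{2b}]^H=[Z_1^{2b\mu_1}\cdots Z_n^{2b\mu_n}]$ in one step for general $b$.
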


\begin{proof}
Let $\mu_i = 2a_i$ be twice the coordinate of~$\ell$ associated to the edge~$i$. Then $\mu_i$ is an integer, and Proposition~\ref{prop:highest} says
\[
[\mathbb{I}^\omega(\ell)]^H = [Z_1^{\mu_1} \cdots Z_n^{\mu_n}].
\]
We have
\[
[\mathbb{I}^\omega(\ell)^{2b}]^H = ([\mathbb{I}^\omega(\ell)]^H)^{2b} = [Z_1^{2b\mu_1} \cdots Z_n^{2b\mu_n}].
\]
Note that 
\[
[Z_1^{2b\mu_1} \cdots Z_n^{2b\mu_n}] = \omega^{-\sum_{i<j} \varepsilon_{ij} (2b\mu_i)(2b\mu_j)} Z_1^{2b\mu_1} \cdots Z_n^{2b\mu_n}.
\]
Since $2b\mu_i \equiv 0$ (mod $2$) for each $i=1,\ldots,n$ and thus $-\sum_{i<j} \varepsilon_{ij} (2b\mu_i)(2b\mu_j) \equiv 0$ (mod $4$), one easily sees $[Z_1^{2b\mu_1} \cdots Z_r^{2b\mu_r}] \in (\mathbf{Z}^\omega\{\mathscr{Z}\})_{(0,{\bf 0})}$, from Definition~\ref{def:parity_of_Laurent_omega-monomials}.

On the other hand, Lemma~\ref{lem:highest_term_represents_parity_when_easy} tells us that $\mathbb{I}^\omega(\ell) \in (\mathbf{Z}^\omega\{\mathscr{Z}\})_{(Q,{\bf q})}$ for a unique $(Q,{\bf q})$. Therefore, by applying Lemma~\ref{lem:multiplication_of_parity_subspaces} repeatedly, we see that there is a unique $(Q',{\bf q}')$ such that $\mathbb{I}^\omega(\ell)^{2b} \in (\mathbf{Z}^\omega\{\mathscr{Z}\})_{(Q',{\bf q}')}$. We just showed that $[\mathbb{I}^\omega(\ell)^{2b}]^H$ belongs to $(\mathbf{Z}^\omega\{\mathscr{Z}\})_{(0,{\bf 0})}$, hence $(Q',{\bf q}')=(0,{\bf 0})$.
\end{proof}

\begin{lemma}
\label{lem:odd_power_is_good}
Let $\ell$ be a lamination consisting of a single nonperipheral curve of weight~1. Then there exists $(Q,{\bf q})$ with $\mathbb{I}^\omega(\ell)^{2b+1} \in (\mathbf{Z}^\omega\{\mathscr{Z}\})_{(Q,{\bf q})}$ for all nonnegative integers $b$.
\end{lemma}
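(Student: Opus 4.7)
The proof should follow immediately by combining the two previous lemmas with the multiplicativity properties of parity. The idea is that odd powers differ from $\mathbb{I}^\omega(\ell)$ by an even power, and even powers live in the ``trivial'' parity component, so multiplication by them does not shift the parity.

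More precisely, the plan is as follows. First, invoke Lemma~\ref{lem:highest_term_represents_parity_when_easy} to fix the unique element $(Q,\mathbf{q})\in\mathbb{Z}/4\mathbb{Z}\times(\mathbb{Z}/2\mathbb{Z})^n$ such that $\mathbb{I}^\omega(\ell)\in(\mathbf{Z}^\omega\{\mathscr{Z}\})_{(Q,\mathbf{q})}$. Next, for any nonnegative integer $b$, write
\[
\mathbb{I}^\omega(\ell)^{2b+1}=\mathbb{I}^\omega(\ell)^{2b}\cdot\mathbb{I}^\omega(\ell).
\]
By Lemma~\ref{lem:even_power_is_good}, the first factor $\mathbb{I}^\omega(\ell)^{2b}$ lies in $(\mathbf{Z}^\omega\{\mathscr{Z}\})_{(0,\mathbf{0})}$, while the second factor lies in $(\mathbf{Z}^\omega\{\mathscr{Z}\})_{(Q,\mathbf{q})}$. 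Lemma~\ref{lem:multiplication_by_zero_parity}, applied with $\mathbf{q}'=\mathbf{0}$ on the left factor, then tells us that this product lies in $(\mathbf{Z}^\omega\{\mathscr{Z}\})_{(0+Q,\mathbf{q})}=(\mathbf{Z}^\omega\{\mathscr{Z}\})_{(Q,\mathbf{q})}$. Since this parity is the same $(Q,\mathbf{q})$ regardless of $b$, the lemma follows.

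I do not expect any genuine obstacle here: the whole content has already been extracted in the preceding two lemmas, and the present statement is essentially a bookkeeping consequence of the decomposition of $\mathbf{Z}^\omega\{\mathscr{Z}\}$ into parity components together with the multiplicativity established in Lemmas~\ref{lem:multiplication_of_parity_subspaces} and~\ref{lem:multiplication_by_zero_parity}. The only point worth being careful about is the direction in which one multiplies in Lemma~\ref{lem:multiplication_by_zero_parity}; the statement there covers both orders, so either factorization $\mathbb{I}^\omega(\ell)^{2b}\cdot\mathbb{I}^\omega(\ell)$ or $\mathbb{I}^\omega(\ell)\cdot\mathbb{I}^\omega(\ell)^{2b}$ gives the same conclusion $(Q,\mathbf{q})$.
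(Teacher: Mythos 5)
Your proof is correct and follows exactly the argument in the paper: factor $\mathbb{I}^\omega(\ell)^{2b+1}$ as an even power times $\mathbb{I}^\omega(\ell)$, apply Lemma~\ref{lem:even_power_is_good} and Lemma~\ref{lem:highest_term_represents_parity_when_easy} to the two factors, and combine via Lemma~\ref{lem:multiplication_by_zero_parity}. Your remark about the order of multiplication being immaterial is a reasonable added detail and does not change the argument.
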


\begin{proof}
Lemma~\ref{lem:highest_term_represents_parity_when_easy} tells us that $\mathbb{I}^\omega(\ell) \in (\mathbf{Z}^\omega\{\mathscr{Z}\})_{(Q,{\bf q})}$ for a unique $(Q,{\bf q})$. Suppose $b$ is a positive integer. Then Lemma~\ref{lem:even_power_is_good} says $\mathbb{I}^\omega(\ell)^{2b} \in (\mathbf{Z}^\omega\{\mathscr{Z}\})_{(0,{\bf 0})}$. Finally, Lemma~\ref{lem:multiplication_by_zero_parity} says that the product of $\mathbb{I}^\omega(\ell)$ and $\mathbb{I}^\omega(\ell)^{2b}$ belongs to $(\mathbf{Z}^\omega\{\mathscr{Z}\})_{(Q,{\bf q})}$.
\end{proof}

\begin{lemma}
\label{lem:highest_term_represents_parity_when_multiple_of_easy}
Let $\ell$ be a lamination consisting of a single nonperipheral curve of weight~1, and let $k$ be a positive integer. Let $(Q,{\bf q})$ be the unique element of $\mathbb{Z}/4\mathbb{Z} \times (\mathbb{Z}/2\mathbb{Z})^n$ such that $[\mathbb{I}^\omega(k\ell)]^H \in (\mathbf{Z}^\omega\{\mathscr{Z}\})_{(Q,{\bf q})}$. Then $\mathbb{I}^\omega(k\ell) \in (\mathbf{Z}^\omega\{\mathscr{Z}\})_{(Q,{\bf q})}$.
\end{lemma}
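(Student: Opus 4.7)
The plan is to use Definition~\ref{def:multicurve}, which expresses $\mathbb{I}^\omega(k\ell)=F_k(\mathbb{I}^\omega(\ell))$, and to combine this with the two previous lemmas. Since $F_k$ is a monic polynomial of degree $k$ with integer coefficients, we can write
\[
\mathbb{I}^\omega(k\ell)=\sum_{i}a_{k,i}\,\mathbb{I}^\omega(\ell)^{i}
\]
for integers $a_{k,i}$, where the sum is finite. The key observation is that only powers $i$ with $i\equiv k\pmod 2$ actually appear.

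First I would verify the parity statement about the Chebyshev polynomials: $F_k(t)\in\mathbb{Z}[t]$ is a linear combination of monomials $t^{i}$ with $i\equiv k\pmod 2$. This is an easy induction on~$k$. The base cases $F_0(t)=2$ and $F_1(t)=t$ are clear. Assuming the statement for $F_{k-1}$ and $F_k$, the recursion $F_{k+1}(t)=F_k(t)\cdot t-F_{k-1}(t)$ gives only monomials of degree $\equiv k+1\pmod 2$ in the first summand and $\equiv k-1\equiv k+1\pmod 2$ in the second.

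Next I would read off the parities. By Lemma~\ref{lem:even_power_is_good}, each even power $\mathbb{I}^\omega(\ell)^{2b}$ belongs to $(\mathbf{Z}^\omega\{\mathscr{Z}\})_{(0,{\bf 0})}$. By Lemma~\ref{lem:odd_power_is_good}, there is a single element $(Q_1,{\bf q}_1)$ such that $\mathbb{I}^\omega(\ell)^{2b+1}\in(\mathbf{Z}^\omega\{\mathscr{Z}\})_{(Q_1,{\bf q}_1)}$ for all $b\geq 0$. Hence, by the previous paragraph, every term $\mathbb{I}^\omega(\ell)^i$ appearing in the expansion of $F_k(\mathbb{I}^\omega(\ell))$ belongs to a common parity subgroup $(\mathbf{Z}^\omega\{\mathscr{Z}\})_{(Q',{\bf q}')}$, where $(Q',{\bf q}')=(0,{\bf 0})$ if $k$ is even and $(Q',{\bf q}')=(Q_1,{\bf q}_1)$ if $k$ is odd. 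Since parity subgroups are closed under integer linear combinations, we conclude $\mathbb{I}^\omega(k\ell)\in(\mathbf{Z}^\omega\{\mathscr{Z}\})_{(Q',{\bf q}')}$.

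Finally, since the highest term $[\mathbb{I}^\omega(k\ell)]^{H}$ is itself one of the Laurent $\omega$-monomials contributing to $\mathbb{I}^\omega(k\ell)$, it must have the same parity $(Q',{\bf q}')$; by the uniqueness in Lemma~\ref{lem:uniqueness_of_parity}, the pair $(Q,{\bf q})$ defined in the statement equals $(Q',{\bf q}')$, which gives $\mathbb{I}^\omega(k\ell)\in(\mathbf{Z}^\omega\{\mathscr{Z}\})_{(Q,{\bf q})}$ as desired. There is no real obstacle here: the whole argument is a routine assembly of Lemmas~\ref{lem:even_power_is_good} and~\ref{lem:odd_power_is_good} together with the elementary parity property of the Chebyshev polynomials, so the only thing to be careful about is the induction establishing that $F_k$ mixes only powers of a single parity.
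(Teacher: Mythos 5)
Your proof is correct and follows essentially the same route as the paper's: write $\mathbb{I}^\omega(k\ell)=F_k(\mathbb{I}^\omega(\ell))$, observe that $F_k$ has only terms of degree $\equiv k \pmod 2$, and then apply Lemma~\ref{lem:even_power_is_good} or Lemma~\ref{lem:odd_power_is_good} accordingly. You merely make explicit two points the paper leaves implicit — the inductive verification of the Chebyshev parity fact and the appeal to Lemma~\ref{lem:uniqueness_of_parity} in the final step.
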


\begin{proof}
Recall that for any positive integer $k$, we have
\[
\mathbb{I}^\omega(k\ell) = F_k(\mathbb{I}^\omega(\ell))
\]
where $F_k(t) \in \mathbb{Z}[t]$ is the $k$th Chebyshev polynomial. Notice that $F_k$ has only terms of degrees that are of same parity (in the usual sense) as $k$. That is, for odd~$k$, \ $F_k$ has only odd degree terms, and for even~$k$, \ $F_k$ has only even degree terms. For even $k$, Lemma~\ref{lem:even_power_is_good} implies $\mathbb{I}^\omega(k\ell) \in (\mathbf{Z}^\omega\{\mathscr{Z}\})_{(0,{\bf 0})}$. For odd $k$, Lemma~\ref{lem:odd_power_is_good} implies $\mathbb{I}^\omega(k\ell) \in (\mathbf{Z}^\omega\{\mathscr{Z}\})_{(Q,{\bf q})}$. It follows that for any~$k$ there exists $(Q,{\bf q})$ such that $\mathbb{I}^\omega(k\ell) \in ( \mathbf{Z}^\omega \{ \mathscr{Z} \} )_{ (Q,{\bf q}) }$. It is easy to see that this statement implies the lemma.
\end{proof}

\begin{lemma}
\label{lem:highest_term_represents_parity_when_single_peripheral}
Let $p$ be a lamination consisting of a single peripheral curve with arbitrary weight. Let $(Q,{\bf q})$ be the unique element of $\mathbb{Z}/4\mathbb{Z} \times (\mathbb{Z}/2\mathbb{Z})^n$ such that $[\mathbb{I}^\omega(p)]^H \in (\mathbf{Z}^\omega\{\mathscr{Z}\})_{(Q,{\bf q})}$. Then $\mathbb{I}^\omega(p) \in (\mathbf{Z}^\omega\{\mathscr{Z}\})_{(Q,{\bf q})}$.
\end{lemma}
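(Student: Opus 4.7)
The plan is to observe that this lemma is essentially immediate from Definition~\ref{def:peripheral}, which directly produces $\mathbb{I}^\omega(p)$ as a single Laurent $\omega$-monomial. Once this is recognized, the conclusion is forced by the uniqueness of parity.

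First I would unpack Definition~\ref{def:peripheral}: for a single peripheral loop $p$ on $S$ of arbitrary weight, we have
\[
\mathbb{I}^\omega(p) = [Z_1^{\mu_1}\cdots Z_n^{\mu_n}]
\]
where $\mu_i = 2a_i$ is twice the coordinate of $p$ associated to edge $i$. Expanding the Weyl ordering by its definition yields
\[
\mathbb{I}^\omega(p) = \omega^{-\sum_{i<j}\varepsilon_{ij}\mu_i\mu_j}\, Z_1^{\mu_1}\cdots Z_n^{\mu_n},
\]
which is already in standard form; in particular it is a single Laurent $\omega$-monomial in $\mathscr{Z} = \{Z_1,\dots,Z_n\}$, with integer exponents $\mu_i \in \mathbb{Z}$ (possibly negative, since peripheral weights may be negative).

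Next I would note that, because $\mathbb{I}^\omega(p)$ consists of a single term, its highest term $[\mathbb{I}^\omega(p)]^H$ coincides with $\mathbb{I}^\omega(p)$ itself. By Lemma~\ref{lem:uniqueness_of_parity}, this monomial has a unique parity $(Q,\mathbf{q}) \in \mathbb{Z}/4\mathbb{Z} \times (\mathbb{Z}/2\mathbb{Z})^n$, explicitly given by $q_i \equiv \mu_i \pmod{2}$ and $Q \equiv -\sum_{i<j}\varepsilon_{ij}\mu_i\mu_j \pmod{4}$. Hence $\mathbb{I}^\omega(p)$ and $[\mathbb{I}^\omega(p)]^H$ lie in the same subspace $(\mathbf{Z}^\omega\{\mathscr{Z}\})_{(Q,\mathbf{q})}$.

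There is no real obstacle here: the entire content of the lemma is that a one-term expression trivially has the same parity as its highest term. The point of stating it as a separate lemma is presumably to place the peripheral case on the same footing as Lemmas~\ref{lem:highest_term_represents_parity_when_easy} and~\ref{lem:highest_term_represents_parity_when_multiple_of_easy}, so that it can be combined with them in the sequel to handle general laminations $\ell = \sum_i k_i\ell_i$ by multiplicativity of parity (Lemma~\ref{lem:multiplication_of_parity_subspaces}).
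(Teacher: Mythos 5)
Your proposal is correct and follows exactly the route of the paper's (very short) proof, which simply observes that for a single peripheral curve $p$ the element $\mathbb{I}^\omega(p)$ is, by Definition~\ref{def:peripheral}, a single Weyl-ordered Laurent $\omega$-monomial and hence equal to its own highest term; you spell out the standard form and invoke Lemma~\ref{lem:uniqueness_of_parity}, but the substance is the same.
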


\begin{proof}
This follows trivially from the fact that, in this case, $\mathbb{I}^\omega(\ell)$ equals its highest term.
\end{proof}

\subsection{Proof of the main result}

In this section, we prove the results stated in the introduction.

\begin{theorem}
\label{thm:main}
There exists a map $\widehat{\mathbb{I}}^q=\widehat{\mathbb{I}}_T^q:\mathcal{A}_{SL_2,S}(\mathbb{Z}^t)\rightarrow\mathcal{X}_T^q$ satisfying the following properties:
\begin{enumerate}
\item The map is identified with Fock and Goncharov's duality map in the classical limit: $\widehat{\mathbb{I}}^{\, 1}(\ell)=\mathbb{I}(\ell)$.

\item The highest term of $\widehat{\mathbb{I}}^q(\ell)$ is the Weyl ordering
\[
q^{-\sum_{i<j}\varepsilon_{ij}a_ia_j}X_1^{a_1}\dots X_n^{a_n}
\]
where $a_i$ is the coordinate of~$\ell$ associated to the edge~$i$.

\item Each $\widehat{\mathbb{I}}^q(\ell)$ is a Laurent polynomial in the variables $X_i$ with coefficients in $\mathbb{Z}[q,q^{-1}]$.

\item Let $*$ be the canonical involutive antiautomorphism of $\mathcal{X}_T^q$ that fixes each $X_i$ and sends~$q$ to~$q^{-1}$. Then $*\widehat{\mathbb{I}}^q(\ell)=\widehat{\mathbb{I}}^q(\ell)$.

\item For any $\ell$,~$\ell'\in\mathcal{A}_{SL_2,S}(\mathbb{Z}^t)$, we have 
\[
\widehat{\mathbb{I}}^q(\ell)\widehat{\mathbb{I}}^q(\ell')=\sum_{\ell''\in\mathcal{A}_{SL_2,S}(\mathbb{Z}^t)}c^q(\ell,\ell';\ell'')\widehat{\mathbb{I}}^q(\ell'')
\]
where $c^q(\ell,\ell';\ell'')\in\mathbb{Z}[q,q^{-1}]$ and only finitely many terms are nonzero.

\item Let $q$ be a primitive $N$th root of unity for odd $N$. Then we have the identity 
\[
\widehat{\mathbb{I}}^q(N\cdot\ell)(X_1,\dots,X_n)=\widehat{\mathbb{I}}^{\, 1}(\ell)(X_1^N,\dots,X_n^N).
\]

\item Let $\ell$ be a point of the tropical space $\mathcal{A}_{SL_2,S}(\mathbb{Z}^t)$ with coordinates $b_1,\dots,b_n$, and let $a_1,\dots,a_n$ be integers satisfying $\sum_j\varepsilon_{ij}a_j=0$ for all $i\in I$. If $\ell'\in\mathcal{A}_{SL_2,S}(\mathbb{Z}^t)$ has coordinates $a_i+b_i$ for~$i\in I$, then 
\[
\widehat{\mathbb{I}}^q(\ell') = q^{-\sum_{i<j}\varepsilon_{ij}a_ia_j}\prod_iX_i^{a_i}\cdot\widehat{\mathbb{I}}^q(\ell).
\]
\end{enumerate}
For triangulations $T$ and $T'$, the maps $\widehat{\mathbb{I}}_T^q$ and $\widehat{\mathbb{I}}_{T'}^q$ are related by $\widehat{\mathbb{I}}_T^q=\Phi_{TT'}^q\circ\widehat{\mathbb{I}}_{T'}^q$.
\end{theorem}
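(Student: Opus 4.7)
The plan is to set $\widehat{\mathbb{I}}^q(\ell) := \mathbb{I}^\omega(\ell)$ for each $\ell \in \mathcal{A}_{SL_2,S}(\mathbb{Z}^t) \subseteq \mathcal{A}_L(S,\mathbb{Z})$, viewed a priori as an element of $\mathcal{Z}_T^\omega$, and then to show that it in fact lies in the subalgebra $\mathcal{X}_T^q \subseteq \mathcal{Z}_T^\omega$ generated by $X_i = Z_i^2$ over $\mathbb{Z}[q,q^{-1}] = \mathbb{Z}[\omega^4,\omega^{-4}]$. This containment is the one genuinely new ingredient of the proof, and it is precisely the problem that the parity machinery of Section~\ref{sec:MainResult} was developed to solve.

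I would first establish the containment. By Proposition~\ref{prop:highest}, the highest term of $\mathbb{I}^\omega(\ell)$ is $[Z_1^{2a_1}\cdots Z_n^{2a_n}] = \omega^{-4\sum_{i<j}\varepsilon_{ij}a_ia_j}\, Z_1^{2a_1}\cdots Z_n^{2a_n}$, which has parity $(0,\mathbf{0})$ in the sense of Definition~\ref{def:parity_of_Laurent_omega-monomials}. Decomposing $\ell=\sum_i k_i \ell_i$ into single curves and applying Lemmas~\ref{lem:highest_term_represents_parity_when_multiple_of_easy} and~\ref{lem:highest_term_represents_parity_when_single_peripheral} to each factor, together with the multiplicativity of parity (Lemma~\ref{lem:multiplication_of_parity_subspaces}), the product $\mathbb{I}^\omega(\ell) = \prod_i \mathbb{I}^\omega(k_i \ell_i)$ acquires a well-defined $\mathscr{Z}$-parity, which must agree with the parity of its highest term, namely $(0,\mathbf{0})$. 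Hence every term of $\mathbb{I}^\omega(\ell)$ has even $Z$-exponents and an $\omega$-coefficient whose exponent is divisible by four, so $\mathbb{I}^\omega(\ell) \in \mathcal{X}_T^q$. Rewriting the highest term as $q^{-\sum_{i<j}\varepsilon_{ij}a_ia_j} X_1^{a_1}\cdots X_n^{a_n}$ yields property~(2), and the parity description of the coefficients is property~(3).

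Properties~(1) and~(4) are then immediate from Propositions~\ref{prop:classicallimit} and~\ref{prop:starinvariance}. For property~(5), apply Proposition~\ref{prop:productexpansion} to get $\mathbb{I}^\omega(\ell)\mathbb{I}^\omega(\ell') = \sum_{\ell''\in\mathcal{A}_L(S,\mathbb{Z})} c^\omega(\ell,\ell';\ell'') \mathbb{I}^\omega(\ell'')$ with $c^\omega \in \mathbb{Z}[\omega^2,\omega^{-2}]$; the left-hand side has parity $(0,\mathbf{0})$, and since by Lemma~\ref{lem:highestdetermines} the distinct $\mathbb{I}^\omega(\ell'')$ have distinct highest monomials and cannot cancel against one another, parity matching forces each nonvanishing $\ell''$ to lie in $\mathcal{A}_{SL_2,S}(\mathbb{Z}^t)$ and each corresponding $c^\omega$ to lie in $\mathbb{Z}[\omega^4,\omega^{-4}] = \mathbb{Z}[q,q^{-1}]$. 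For property~(7), the condition $\sum_j \varepsilon_{ij} a_j = 0$ says precisely that $(a_i)$ lies in the sublattice realized by peripheral laminations, so there is a peripheral $\ell_p$ with coordinates $a_i$; by Definition~\ref{def:peripheral} one has $\widehat{\mathbb{I}}^q(\ell_p) = q^{-\sum_{i<j}\varepsilon_{ij}a_ia_j}\prod_i X_i^{a_i}$, and Lemma~\ref{lem:absorbperipheral} then yields $\widehat{\mathbb{I}}^q(\ell') = \widehat{\mathbb{I}}^q(\ell_p + \ell) = \widehat{\mathbb{I}}^q(\ell_p)\widehat{\mathbb{I}}^q(\ell)$.

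For property~(6), on a single nonperipheral curve $\ell$ with lift $K$, Definition~\ref{def:multicurve} combined with the fact that $\Tr_T^\omega$ is an algebra homomorphism gives $\mathbb{I}^\omega(N\ell) = F_N(\Tr_T^\omega([K])) = \Tr_T^\omega(F_N([K]))$; after carefully matching parameters, Theorem~\ref{thm:BWFrobenius} converts this to $\widehat{\mathbb{I}}^{\,1}(\ell)(X_1^N,\dots,X_n^N)$. The peripheral case is a direct computation on the Weyl-ordered monomial of Definition~\ref{def:peripheral} at an odd $N$th root of unity, and Definition~\ref{def:canonicalmap} propagates the identity to general laminations by multiplicativity. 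Finally, the triangulation-change compatibility $\widehat{\mathbb{I}}_T^q = \Phi_{TT'}^q \circ \widehat{\mathbb{I}}_{T'}^q$ is inherited from Theorem~\ref{thm:BWchangetriangulation}: $\mathbb{I}^\omega$ is built from $\Tr_T^\omega$, and $\Phi_{TT'}^q$ is the restriction of $\Theta_{TT'}^\omega$ to $\widehat{\mathcal{X}}^q$. The principal obstacle throughout is the parity argument underlying the containment in $\mathcal{X}_T^q$: properties~(3) and~(5) both require showing that the \emph{entire} Laurent polynomial $\mathbb{I}^\omega(\ell)$, not merely its highest term, has parity $(0,\mathbf{0})$, and this is exactly what the technical groundwork of Section~\ref{sec:MainResult} delivers.
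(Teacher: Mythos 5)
Your proposal follows essentially the same route as the paper's proof: define $\widehat{\mathbb{I}}^q := \mathbb{I}^\omega$ with $\omega^4 = q$, invoke Proposition~\ref{prop:highest} and the parity machinery (Lemmas~\ref{lem:highest_term_represents_parity_when_multiple_of_easy}, \ref{lem:highest_term_represents_parity_when_single_peripheral}, \ref{lem:multiplication_of_parity_subspaces}) to show $\mathbb{I}^\omega(\ell)$ has parity $(0,\mathbf{0})$ and hence lands in $\mathcal{X}_T^q$, then read off properties~(1), (2), (4), (6), (7) from the results already established in Sections~2 and~3.

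The one place where you compress more than is really safe is property~(5). Knowing that the $\mathbb{I}^\omega(\ell_j'')$ have distinct highest monomials and ``cannot cancel against one another'' does not by itself let you conclude in one step that every $\ell_j''$ lies in $\mathcal{A}_{SL_2,S}(\mathbb{Z}^t)$ and every $c^\omega$ lies in $\mathbb{Z}[q,q^{-1}]$: a priori the sum $\sum_j c^\omega(\ell,\ell';\ell_j'')\,\mathbb{I}^\omega(\ell_j'')$ could have overall parity $(0,\mathbf{0})$ while individual summands mix parities that cancel in non-leading positions. The paper handles this by picking a total (lexicographic) ordering, isolating the globally highest monomial $c^\omega(\ell,\ell';\ell_1'')[\mathbb{I}^\omega(\ell_1'')]^H$, observing it cannot cancel and so must itself have parity $(0,\mathbf{0})$, deducing from Lemma~\ref{lem:multiplication_by_zero_parity} that $\ell_1''\in\mathcal{A}_{SL_2,S}(\mathbb{Z}^t)$ and $c^\omega(\ell,\ell';\ell_1'')\in\mathbb{Z}[q,q^{-1}]$, subtracting that term, and iterating. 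Your write-up should make this inductive peeling explicit; once it is, your proof matches the paper's.
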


\begin{proof}
For $\ell\in\mathcal{A}_{SL_2,S}(\mathbb{Z}^t)$, we define $\widehat{\mathbb{I}}^q(\ell)=\mathbb{I}^\omega(\ell)$ where $\omega$ is a fourth root of $q$. Let us check that this gives a map $\widehat{\mathbb{I}}^q:\mathcal{A}_{SL_2,S}(\mathbb{Z}^t)\rightarrow\mathcal{X}_T^q$ with the required properties.

1. If $q=1$ then $\omega$ is a fourth root of unity. We will show in part~3 that the coefficients of $\widehat{\mathbb{I}}^q(\ell)$ are Laurent polynomials in $\omega^4$. Therefore $\widehat{\mathbb{I}}^q(\ell)$ is unchanged if we assume $\omega=1$. We saw in~Proposition~\ref{prop:classicallimit} that $\mathbb{I}^\omega$ can be identified with the map $\mathbb{I}$ defined by Fock and Goncharov when $\omega=1$.

2. We saw in~Proposition~\ref{prop:highest} that the highest term of $\mathbb{I}^\omega(\ell)$ is $[Z_1^{\mu_1}\dots Z_n^{\mu_n}]$ where $\mu_i=2a_i$ is twice the coordinate of~$\ell$ associated to the edge $i$. This equals 
\begin{align*}
[Z_1^{\mu_1}\dots Z_n^{\mu_n}] &= [Z_1^{2a_1}\dots Z_n^{2a_n}] \\
&= \omega^{-\sum_{i<j}\varepsilon_{ij}(2a_i)(2a_j)}Z_1^{2a_1}\dots Z_n^{2a_n} \\
&= \omega^{-4\sum_{i<j}\varepsilon_{ij}a_ia_j}Z_1^{2a_1}\dots Z_n^{2a_n} \\
&= q^{-\sum_{i<j}\varepsilon_{ij}a_ia_j}X_1^{a_1}\dots X_n^{a_n}
\end{align*}
as desired.

3. Let $\ell\in\mathcal{A}_{SL_2,S}(\mathbb{Z}^t)$. Then we can write
\[
\ell = k_1 \ell_1 + \cdots + k_r \ell_r + p_1 + \cdots + p_s
\]
where $\ell_1,\ldots,\ell_r$ are represented by nonperipheral curves of weight~1, $p_1,\ldots,p_s$ by peripheral curves, $k_1,\ldots,k_r$ are positive integers, and the laminations appearing in the decomposition are mutually disjoint and mutually non-homotopic. We can find unique elements $(Q_1,{\bf q}_1), \ldots, (Q_r, {\bf q}_r)$, $(Q_1', {\bf q}_1'), \ldots, (Q_s', {\bf q}_s')$ of $\mathbb{Z}/4\mathbb{Z} \times (\mathbb{Z}/2\mathbb{Z})^n$ such that
\[
[\mathbb{I}^\omega(k_i\ell_i)]^H \in (\mathbf{Z}^\omega\{\mathscr{Z}\})_{(Q_i,{\bf q}_i)},
\quad
[\mathbb{I}^\omega(p_j)]^H \in (\mathbf{Z}^\omega\{\mathscr{Z}\})_{(Q_j',{\bf q}_j')}
\]
for all $i$ and~$j$. Then Lemmas \ref{lem:highest_term_represents_parity_when_multiple_of_easy} and \ref{lem:highest_term_represents_parity_when_single_peripheral} tell us that
\[
\mathbb{I}^\omega(k_i\ell_i) \in (\mathbf{Z}^\omega\{\mathscr{Z}\})_{(Q_i,{\bf q}_i)},
\quad
\mathbb{I}^\omega(p_j) \in (\mathbf{Z}^\omega\{\mathscr{Z}\})_{(Q_j',{\bf q}_j')}.
\]
By repeated use of Lemma~\ref{lem:multiplication_of_parity_subspaces}, we see that there exists a unique $(Q,{\bf q}) \in \mathbb{Z}/4\mathbb{Z} \times (\mathbb{Z}/2\mathbb{Z})^n$ such that
\[
\mathbb{I}^\omega(\ell) = \mathbb{I}^\omega(k_1 \ell_1) \dots \mathbb{I}^\omega(k_r \ell_r) \mathbb{I}^\omega(p_1) \dots \mathbb{I}^\omega(p_s)
\in (\mathbf{Z}^\omega\{\mathscr{Z}\})_{(Q,{\bf q})}.
\]
Moreover, we can say what $(Q,{\bf q})$ is. This last equation implies in particular that 
\[
[\mathbb{I}^\omega(\ell)]^H \in (\mathbf{Z}^\omega\{\mathscr{Z}\})_{(Q,{\bf q})}.
\]
From part~2, we know that 
\[
[\mathbb{I}^\omega(\ell)]^H \in (\mathbf{Z}^\omega\{\mathscr{Z}\})_{(0,{\bf 0})}.
\]
So $(Q,{\bf q}) = (0,{\bf 0})$. This means that $\mathbb{I}^\omega(\ell)$ is a Laurent polynomial in $X_1=Z_1^2,\dots,X_n=Z_n^2$ whose coefficients are Laurent polynomials in $q=\omega^4$ with integer coefficients.

4. The canonical antiautomorphism $*$ on $\mathcal{Z}_T^\omega$ restricts to the canonical antiautomorphism on the subalgebra $\mathcal{X}_T^q\subseteq\mathcal{Z}_T^\omega$. By Proposition~\ref{prop:starinvariance}, $\mathbb{I}^\omega(\ell)$ is invariant under this map for any lamination $\ell\in\mathcal{A}_{SL_2,S}(\mathbb{Z}^t)\subseteq\mathcal{A}_L(S,\mathbb{Z})$.

5. This product formula was proved in~Proposition~\ref{prop:productexpansion}. We need to show that the coefficients in this expansion are Laurent polynomials in $q$ and that the laminations $\ell''$ appearing in the decomposition lie in $\mathcal{A}_{SL_2,S}(\mathbb{Z}^t)$. Note that if $\ell$,~$\ell'\in\mathcal{A}_{SL_2,S}(\mathbb{Z}^t)$, then the expressions $\mathbb{I}^\omega(\ell)$ and $\mathbb{I}^\omega(\ell')$ both have parity $(0,\mathbf{0})$, and therefore so does their product. Let us expand this product as 
\[
\sum_{j=1}^k c^\omega(\ell,\ell';\ell_j'')\mathbb{I}^\omega(\ell_j'')
\]
for distinct laminations $\ell_1'',\dots,\ell_k''\in\mathcal{A}_L(S,\mathbb{Z})$ and $0\neq c^\omega(\ell,\ell';\ell_j'')\in\mathbb{Z}[\omega^2,\omega^{-2}]$. We can impose a lexicographic total ordering $\geq$ on the set of all commutative monomials $Z_1^{\mu_1}\dots Z_n^{\mu_n}$ so that $[\mathbb{I}^1(\ell_j'')]^H$ is the highest term of~$\mathbb{I}^1(\ell_j'')$ with respect to this total ordering. By Lemma~\ref{lem:highestdetermines}, the highest terms of the $\mathbb{I}^1(\ell_j'')$ are distinct, so we may assume 
\[
[\mathbb{I}^1(\ell_1'')]^H>[\mathbb{I}^1(\ell_2'')]^H>\dots >[\mathbb{I}^1(\ell_k'')]^H.
\]
Consider the expression $c^\omega(\ell,\ell';\ell_1'') [\mathbb{I}^\omega(\ell_1'')]^H$. It cannot cancel with any other term in the sum, so it must have parity $(0,\mathbf{0})$. A term of $c^\omega(\ell,\ell';\ell_1'')$ has parity $(Q,{\bf 0})$ for some $Q\in \mathbb{Z}/4\mathbb{Z}$, so Lemma~\ref{lem:multiplication_by_zero_parity} implies that $[\mathbb{I}^\omega(\ell_1'')]^H$ must have parity $(-Q, {\bf 0})$. This means that all exponents in the monomial $[\mathbb{I}^\omega(\ell_1'')]^H$ are even, and hence $\ell_1''\in\mathcal{A}_{SL_2,S}(\mathbb{Z}^t)$. By part~3 we see that $\mathbb{I}^\omega(\ell_1'')$ has parity $(0,{\bf 0})$, hence so does $[\mathbb{I}^\omega(\ell_1'')]^H$. Hence $c^\omega(\ell,\ell';\ell''_1)$ is of parity $(0,{\bf 0})$. Equivalently, it is a Laurent polynomial in~$q$ with integral coefficients. Applying Lemma~\ref{lem:multiplication_by_zero_parity} again, we see that $c^\omega(\ell,\ell';\ell_j'')\mathbb{I}^\omega(\ell_1'')$ has parity~$(0,\mathbf{0})$, and therefore so does the sum 
\[
\sum_{j=2}^k c^\omega(\ell,\ell';\ell_j'')\mathbb{I}^\omega(\ell_j'').
\]
Arguing as before, we see that $\ell_2''\in\mathcal{A}_{SL_2,S}(\mathbb{Z}^t)$ and $c^\omega(\ell,\ell';\ell''_2)$ is a Laurent polynomial in~$q$. Continuing in this way, we see that all $\ell_j''$ lie in the space~$\mathcal{A}_{SL_2,S}(\mathbb{Z}^t)$ and all $c^\omega(\ell,\ell';\ell''_j)$ lie in the ring $\mathbb{Z}[q,q^{-1}]$. Let $c^q(\ell,\ell';\ell''_j) := c^\omega(\ell,\ell';\ell''_j)$ for each $j$ to get the desired result.

6. Suppose $\ell\in\mathcal{A}_L(S,\mathbb{Z})$ is a lamination on~$S$ consisting of a single nonperipheral loop of weight~1. Let $q$ be a primitive $N$th root of unity for odd $N$, and choose $A$ so that $q=A^{-2}$. Then $A^N=\pm1$. If $A^N=-1$, then we can replace $A$ by $\tilde{A}=-A$, and we have $q=\tilde{A}^{-2}$ and $\tilde{A}^N=1$. Thus we can assume that $A^N=1$. It follows that 
\[
\kappa\coloneqq A^{N^2}=1.
\]
Choose $\omega$ so that $A=\omega^{-2}$. Then $\omega^{N^2}=\pm1$. As before, we can replace $\omega$ by $-\omega$ if necessary and assume that $\omega^{N^2}=1$. Then 
\[
\iota\coloneqq\omega^{N^2}=1.
\]
Note that $A^4$ is a primitive $N$th root of unity since $N$ is odd. We can therefore apply Theorem~\ref{thm:BWFrobenius} with our choices of $A$ and $\omega$. Let $K$ be a framed link in $S\times[0,1]$ with constant elevation and the vertical framing that projects to $\ell$. Then Theorem~\ref{thm:BWFrobenius} implies 
\begin{align*}
F_N(\Tr_T^\omega([K]))(Z_1,\dots,Z_n) &= \Tr_T^\omega(F_N([K]))(Z_1,\dots,Z_n) \\
&= \Tr_T^\iota([K])(Z_1^N,\dots,Z_n^N).
\end{align*}
Consider a lamination $k\ell$ consisting of a single nonperipheral curve of weight~$k$. It is well known that Chebyshev polynomials satisfy $F_s\circ F_t=F_{st}$ for any positive integers $s$ and~$t$. Using this fact and the previous calculation, we see that 
\begin{align*}
\mathbb{I}^\omega(Nk\ell)(Z_1,\dots,Z_n) &= F_{kN}(\Tr_T^\omega([K]))(Z_1,\dots,Z_n) \\
&= F_k(F_N(\Tr_T^\omega([K])))(Z_1,\dots,Z_n) \\
&= F_k(\Tr_T^1([K]))(Z_1^N,\dots,Z_n^N) \\
&= \mathbb{I}^1(k\ell)(Z_1^N,\dots,Z_n^N).
\end{align*}
On the other hand, suppose $\ell\in\mathcal{A}_L(S,\mathbb{Z})$ is a lamination on~$S$ consisting of a single peripheral loop. Let $\mu_i=2a_i$ be twice the coordinate of~$\ell$ associated to the edge $i$ of the ideal triangulation. Then 
\begin{align*}
\mathbb{I}^\omega(N\ell)(Z_1,\dots,Z_n) &= \omega^{-N^2\sum_{i<j}\mu_i\mu_j\varepsilon_{ij}} Z_1^{N\mu_1}\dots Z_n^{N\mu_n} \\
&= Z_1^{N\mu_1}\dots Z_n^{N\mu_n} \\
&= \mathbb{I}^1(\ell)(Z_1^N,\dots,Z_n^N).
\end{align*}
Finally, if $\ell$ is any lamination, we can write $\ell=\sum_ik_i\ell_i$ where $\ell_i$ are the curves of~$\ell$ with each homotopy class of curves appearing at most once in the sum and $k_i\in\mathbb{Z}$. Then the above calculations imply 
\begin{align*}
\mathbb{I}^\omega(N\ell)(Z_1,\dots,Z_n) &= \prod_i\mathbb{I}^\omega(Nk_i\ell_i)(Z_1,\dots,Z_n) \\
&= \prod_i\mathbb{I}^1(k_i\ell_i)(Z_1^N,\dots,Z_n^N) \\
&= \mathbb{I}^1(\ell)(Z_1^N,\dots,Z_n^N).
\end{align*}
Restricting to $\ell\in\mathcal{A}_{SL_2,S}(\mathbb{Z}^t)$ yields the desired result.

7. Let $a_1,\dots,a_n$ be integers satisfying $\sum_j\varepsilon_{ij}a_j=0$ for all elements $i\in I$. Consider the lamination $\ell''$ having coordinates $\{a_i\}$. Let $k$ be an edge of the ideal triangulation of our surface that separates two triangles. The union of these triangles is a quadrilateral and the number $\sum_j\varepsilon_{kj}a_j$ equals the signed weight of curves of the lamination $\ell''$ that go diagonally across this quadrilateral, connecting opposite sides. Therefore the condition $\sum_j\varepsilon_{ij}a_j=0$ implies that no curve of $\ell''$ goes diagonally across a quadrilateral. Equivalently, each curve of $\ell''$ is peripheral.

Since each curve of $\ell''$ is peripheral, we can deform the curves of $\ell''$ into a small neighborhood of the punctures so that they do not intersect the curves of $\ell$. Then the union of the curves of $\ell$ and $\ell''$ represents the unique lamination with coordinates $a_i+b_i$ for $i\in I$. Call this lamination~$\ell'$. We know that $\widehat{\mathbb{I}}^q(\ell'')$ equals its highest term:
\[
\widehat{\mathbb{I}}^q(\ell'')=q^{-\sum_{i<j}\varepsilon_{ij}a_ia_j}\prod_iX_i^{a_i}.
\]
Therefore 
\begin{align*}
\widehat{\mathbb{I}}^q(\ell') &= \widehat{\mathbb{I}}^q(\ell'')\widehat{\mathbb{I}}^q(\ell) \\
&= q^{-\sum_{i<j}\varepsilon_{ij}a_ia_j}\prod_iX_i^{a_i}\cdot\widehat{\mathbb{I}}^q(\ell)
\end{align*}
as desired.
\end{proof}

\section*{Acknowledgments}
\addcontentsline{toc}{section}{Acknowledgements}

The first author thanks the Korea Institute for Advanced Study for hosting him in Seoul where much of this work was completed.


\begin{thebibliography}{99}
\addcontentsline{toc}{section}{References}

\bibitem{BonahonWong} Bonahon, F. and Wong, H. (2011). Quantum traces for representations of surface groups in $SL_2(\mathbb{C})$. \emph{Geometry and Topology}, \textbf{15}(3), 1569--1615.

\bibitem{BWrep} Bonahon, F. and Wong, H. (2012). Representations of the Kauffman skein algebra~I: invariants and miraculous cancellations. \texttt{arXiv:1206.1638 [math.GT]}.

\bibitem{CF} Chekhov, L. and Fock, V.V. (1999). Quantum Teichm\"uller space. \texttt{arXiv:math/9908165 [math.QA]}.

\bibitem{IHES} Fock, V.V. and Goncharov, A.B. (2006). Moduli spaces of local systems and higher Teichm\"uller theory. \emph{Publications Math\'ematiques de l'Institut des Hautes \'Etudes Scientifiques}, \textbf{103}(1), 1--211.

\bibitem{dual} Fock, V.V. and Goncharov, A.B. (2007). Dual Teichm\"uller and lamination spaces. In Handbook of Teichm\"uller theory I, \emph{IRMA Lectures in Mathematics and Theoretical Physics}, \textbf{11}, 647--684.

\bibitem{ensembles} Fock, V.V. and Goncharov, A.B. (2009). Cluster ensembles, quantization and the dilogarithm. \emph{Annales Scientifiques de l'\'Ecole Normale Sup\'erieure}, \textbf{42}(6), 865--930.

\bibitem{dilog} Fock, V.V. and Goncharov, A.B. (2009). The quantum dilogarithm and representations of quantum cluster varieties. \emph{Inventiones mathematicae}, \textbf{175}(2), 223--286.

\bibitem{productsum} Frohman, C. and Gelca, R. (2000). Skein modules and the noncommutative torus. \emph{Transactions of the American Mathematical Society}, \textbf{352}(10), 4877--4888.

\bibitem{GMN} Gaiotto, D., Moore, G.W., and Neitzke, A. (2013). Framed BPS states. \emph{Advances in Theoretical and Mathematical Physics}, \textbf{17}(2), 241--397.

\bibitem{GHKK} Gross, M., Hacking, P., Keel, S., and Kontsevich, M. (2014). Canonical bases for cluster algebras. \texttt{arXiv:1411.1394 [math.AG]}.

\bibitem{Thurston} Thurston, D. (2014). Positive basis for surface skein algebras. \emph{Proceedings of the National Academy of Sciences}, \textbf{111}(27), 9725--9732.

\end{thebibliography}
\end{document}